\documentclass[onefignum,onetabnum]{siamart171218}

\textwidth16.5cm \textheight24cm \oddsidemargin0cm \evensidemargin0cm \topmargin-15mm
\usepackage[margin=1in]{geometry}

\usepackage{amsmath,amsfonts,amssymb}
\usepackage{amsbsy}
\usepackage{bbm}
\usepackage{cases}
\usepackage{centernot}
\usepackage{color}
\usepackage[english]{babel}
\usepackage{epsfig}
\usepackage{graphics}
\usepackage{graphicx}
\usepackage{hyperref}
\usepackage[utf8]{inputenc}        
\usepackage{lipsum}
\usepackage{mathtools}
\usepackage{mdframed}
\usepackage{multicol}
\usepackage{mathrsfs}
\usepackage{url}
\inputencoding{utf8}
\usepackage{verbatim}
\usepackage{stmaryrd}
\usepackage{wasysym}%
\usepackage{scalerel}
\DeclareMathAlphabet{\mathpzc}{OT1}{pzc}{m}{it}

\newcommand{\C}{{\mathbb C}}
\newcommand{\R}{{\mathbb R}}

\newcommand{\norm}[1]{\left\lVert#1\right\rVert}

\newcommand{\sjump}[1]{\left[#1\right]}

\newcommand{\Gop}{{\mathscr G}}
\newcommand{\Sp}{{\mathbb S}}
\newcommand\nxs{\hspace*{-0.2mm}}
\newcommand{\Gr}{{\mathbb G}}
\newcommand{\xhat}{{\hat{\boldsymbol{\xi}}}}
\newcommand{\bd}{\boldsymbol{d}}
\newcommand{\G}{\Gamma}
\newcommand{\Gz}{{\Gamma_0}}
\newcommand{\xbold}{{\boldsymbol{\xi}}}
\newcommand{\ybold}{{\boldsymbol{y}}}

\newcommand{\Fop}{{\mathcal F}}

\newcommand{\bnu}{\boldsymbol{\nu}}

\newcommand{\Hop}{{\mathscr H}}

\newcommand{\ubold}{\bold{u}}
\newcommand{\bn}{\boldsymbol{n}}
\newcommand{\bR}{\bold{R}}
\newcommand{\tbold}{\bold{t}}
\newcommand{\us}{\ubold_{\text{\exs sc}}}
\newcommand{\vbold}{\bold{v}}


\newcommand{\dsy}{\,\,\text{d}S_{\ybold}}
\newcommand{\dy}{\,\,\text{d}V_{\ybold}}

\newcommand{\g}{\bold{g}}
\newcommand{\zbold}{{\boldsymbol{x}}}
\newcommand{\bq}{\boldsymbol{q}}
\newcommand{\bxi}{\boldsymbol{\xi}}
\newcommand{\bx}{\boldsymbol{x}}
\newcommand{\bw}{\boldsymbol{w}}
\newcommand{\bphi}{\boldsymbol{\phi}}
\newcommand{\boldeta}{\boldsymbol{\eta}}

\newcommand{\Top}{{\mathcal T}_R}
\newcommand{\Hh}{H^{1/2}(S_R)^3}
\newcommand{\Hmh}{H^{-1/2}(S_R)^3}

\newcommand{\KK}{{\mathbf K}}
\newcommand{\CC}{{\mathbf C}}
\newcommand{\dnu}{\partial^*_{\bnu}}

\newcommand{\Id}{{\mathbf I}_{3\times 3}}

\newcommand{\bv}{\boldsymbol{v}}
\newcommand{\GG}{\boldsymbol{\G}_{0}}
\newcommand{\Grb}{{\mathbb G}_b}

\newcommand\exs{\hspace*{0.4mm}}
\newcommand{\supp}{\text{supp}\,\,}
\makeatletter
\newsavebox{\@brx}
\newcommand{\llangle}[1][]{\savebox{\@brx}{\(\m@th{#1\langle}\)}%
  \mathopen{\copy\@brx\kern-0.5\wd\@brx\usebox{\@brx}}}
\newcommand{\rrangle}[1][]{\savebox{\@brx}{\(\m@th{#1\rangle}\)}%
  \mathclose{\copy\@brx\kern-0.5\wd\@brx\usebox{\@brx}}}
\makeatother

\newtheorem{assumption}{Assumption}
\numberwithin{assumption}{section}


\usepackage{lipsum}
\usepackage{amsfonts}
\usepackage{graphicx}
\usepackage{epstopdf}
\usepackage{algorithmic}
\ifpdf
  \DeclareGraphicsExtensions{.eps,.pdf,.png,.jpg}
\else
  \DeclareGraphicsExtensions{.eps}
\fi


\newsiamremark{remark}{Remark}
\newsiamremark{hypothesis}{Hypothesis}
\crefname{hypothesis}{Hypothesis}{Hypotheses}
\newsiamthm{claim}{Claim}

\headers{Real-time imaging of interfacial damage in layered composites}{F. Pourahmadian, I. D. Teresa}

\title{Real-time imaging of interfacial damage in layered composites\thanks{Submitted to the editors 02 Jan 2018.
}}


\author{ 
Fatemeh Pourahmadian\thanks{ Department of Civil, Environmental and Architectural Engineering, University of Colorado, Boulder, USA(\email{ fatemeh.pourahmadian@colorado.edu}).}\and 
Irene de Teresa\thanks{Mathematical Sciences, University of Delaware, Newark, Delaware 19716, USA.} }

\usepackage{amsopn}


\ifpdf
\hypersetup{
  pdftitle={Real-time imaging of interfacial damage in layered composites},
  pdfauthor={Irene de Teresa, Fatemeh Pourahmadian}
}
\fi

\begin{document}

\maketitle

\begin{abstract}
A theoretical platform is developed for active elastic-wave sensing of (stationary and advancing) fractures along bi-material interfaces in layered composites. Damaged contact surfaces are characterized by a heterogeneous distribution of (elastic) stiffness $\KK$ which is a-priori unknown. The proposed imaging functional takes advantage of sequential wavefield measurements for non-iterative and concurrent reconstruction of multiple fractures in heterogeneous domains, with an exceptional spatial resolution and with minimal sensitivity to measurement errors and uncertain elasticity of damage zones. This is accomplished through adaptation of the $F_\sharp$-factorization method (FM) applied to elastodynamic wavefields in a sensing sequence i.e.~a measurement campaign performed before and after the formation (or evolution) of interfacial fractures. The direct scattering problem is formulated in the frequency domain where the damage support is illuminated by a set of incident P- and S-waves, and thus-induced scattered waves are monitored at the far field. In this setting, the germane mixed reciprocity principle is introduced, and $F_\sharp$-factorization of the differential scattering operator is rigorously established for composite backgrounds. These results lead to the development of an FM-based fracture indicator whose affiliated cost functional is inherently convex, thus, its minimizer can be computed without iterations. This attribute coupled with the reduced sampling space, proposed in this work, remarkably expedite the data inversion process. The performance of proposed imaging functional is demonstrated by a set of numerical experiments.
\end{abstract}

\pagestyle{myheadings}
\thispagestyle{plain}
\begin{keywords} non-iterative elastic-wave imaging, interfacial anomalies, real-time ultrasonic testing, heterogeneous composites, factorization method.
\end{keywords}

\begin{AMS}
35R30, 35Q74, 35J40, 73.
\end{AMS}
\section{Introduction}

\noindent Layered structure is a fundamental feature of a large class of (natural and engineered) materials such as crustal rocks~\cite{Narr2011}, composites (e.g.~concretes and geopolymers)~\cite{zaki2015,nour2016}, biological and biomimetic tissues~\cite{shim2002}, metamaterial transducers, lenses, and cloaks~\cite{Shi2015,nour2016,stenger2012}. In geomaterials, active interfacial mechanisms play a key role in~(a)~formation and evolution of aquifers and hydrocarbon reservoirs~\cite{Narr2011},~(b)~propagation of faults and generation of earthquakes~\cite{Hedayat2014,fatemeh-thesis},~(c)~progressive failure along discontinuities resulting in e.g.,~catastrophic failure of civil infrastructure~\cite{wisnom2010}. In biomaterials, the \emph{continuity and strength of interfaces} is important~(a)~in multi-compartment scaffolds for efficient cellular transport between scaffold phases~\cite{thom2012,shim2002},~(b)~in tumor-tissue contacts for timely detection of mutation and tumor branching~\cite{yin2015,gaed2008}, and~(c)~in metal-to-bone junctions in orthopedic total joint implants for effective load transfer and for prevention of pain and unprecedented failure due to stress concentration~\cite{thom2012}. In metamaterials, their periodic (or layered) microstructure allows for enhanced heat/electric conductivity as well as acoustic/elastic waveguiding effects~\cite{stenger2012,mash2013}. However, delamination and other interfacial variations are major impediments to effective performance of such systems. Therefore, real-time monitoring and characterization of interfaces in layered composites has a major impact in a wide range of engineering applications.           

\noindent Traditionally, elastic waves are deployed for internal characterization of solids thanks to their interaction with hidden anomalies~\cite{Song2007,Li2014,solo2016,pour2015}. Common \emph{passive} approaches to remote sensing of damage in material interfaces e.g.~acoustic emission~\cite{zaki2015} are reliant upon significant assumptions on the nature of wave motion in the background domain~\cite{shap2015}, and their usage is temporary i.e.~fairly restricted to the time-span of interfacial instability and evolution. Additionally, these methods are agnostic to~{{(a)}}~normal mode of fracture propagation~\cite{Maje2007}, and~{{(b)}}~creep-like (aseismic) advancement of discontinuities~\cite{scot1994,scot1994(2),calo2011}. The more general waveform tomography methods~\cite{mass2017,calo2011,feigl2017} are, by and large, iterative (cost-inefficient) and thus inapplicable in a real-time sensing framework. Also, the spatial resolution of the final image in these schemes are mostly dependent upon the frequency regime of illumination. Other diagnostic technologies that make use of electromagnetic (or other non-mechanical) waves e.g.~CT and MRI~\cite{gaed2008,Will2016} are mostly focused on laboratory applications, and their implementation for large systems is quite costly and challenging.

\noindent Recently, a suit of imaging tools rooted in Kirsch's Factorization Method (FM)~\cite{kirsch, kirschFM} are developed, mostly in the context of electromagnetic inverse scattering, which are non-iterative and flexible in terms of sensing configuration~\cite{pour2017,fatemeh,delam,max-delam}. The key idea is to construct an indicator functional whose support is precisely the loci of hidden anomalies in the domain of interest. For damage reconstruction, the imaging indicator is computed over a designated sampling grid, where the support of its highest values identifies the location and geometry of the sought features. On repeated evaluation of the indicator field over a span of time -- that is fast thanks to the non-iterative nature of calculations, one may retrieve the spatiotemporal evolution of damage almost in real time.    

\noindent In this study, FM is generalized for active reconstruction of interfacial damage (e.g.~corrosion, fatigue fractures, delamination) in layered composite materials. Here, each layer (i.e.~component) is considered as a linear, isotropic, elastic solid with distinct material properties. In the reference (undamaged) state, the layers are connected through continuous i.e.~welded interfaces, a subset of which will undergo damage (or evolution) over time. As a result, in the secondary configuration, there exist some discontinuous bi-material interfaces formulated via the well-known linear slip model~\cite{Pyrak1992,pour2017(3)}, where the traction applied to an interface is linearly related to the affiliated jump in the displacement wavefield via the so-called stiffness matrix. It is worth mentioning that more general  {\em approximate transmission conditions} have been derived (in the context of acoustics and electromagnetism) by using perturbation techniques under the premise of vanishing damage thickness at the contact interface (e.g. ~\cite{MaxHaddar, haddar08, chun, ATCHaddar, Delourme2, delam}). Thus-obtained boundary conditions typically involve surface differential operators and achieve higher order accuracy in modeling heterogeneous contacts, which can be naturally extended to elastic interfaces.

\noindent  The proposed imaging indicator will be developed by analyzing the range of the so-called differential scattering operator, constructed on the basis of~(a)~sequentially measured remote data, and~(b)~synthetic waveforms in the background (undamaged) domain. The resulting imaging functional will be evaluated over the intrinsic bi-material interfaces of the background domain, minimizing the number of sampling points necessary to detect advancing anomalies along such interfaces. This 3D sensing tool is fast (non-iterative) and capable of simultaneously reconstructing multiple fractures. In addition, this indicator can provide {\emph{high-resolution images}} of damage irrespective of the illumination frequency~\cite{kirsch}, while offering a significant flexibility in terms of the location and number of sensors i.e.~sources and receivers~\cite{fatemeh-thesis}.

\noindent This paper is organized as the following:~Section~\ref{sec1} describes the problem under consideration and introduces the preliminary concepts associated to the forward scattering problem; Section~\ref{sec2} establishes the well-posedness of the direct problem required for rigorous justification of the imaging indicator (in layered composites) developed in Section~\ref{sec3}. The performance of thus-obtained reconstruction tool is then demonstrated through a set of numerical experiments in Sections~\ref{sec4}.

\subsection{Problem Statement} \label{sec1}
This work aims to develop an inverse solution for active geometric (shape and size) reconstruction of fractures propagating along bi-material interfaces. In this vein, scattering of elastic (P- and S-) waves by a bounded penetrable obstacle $\Omega\subset\R^3$, featuring a layered internal structure and Lipschitz external boundary $\G_1$, is considered. Here, $\Omega$ is composed of two distinct layers, $\Omega_-$ and $\Omega_+$, that are closed and Lipschitz-continuous with common interface $\G$, housed in a linear, homogeneous, and isotropic elastic solid $\Omega_{ext}:=\R^3\setminus\overline{\Omega}$.
\begin{figure}[tp]
\center\includegraphics[width=0.7\linewidth]{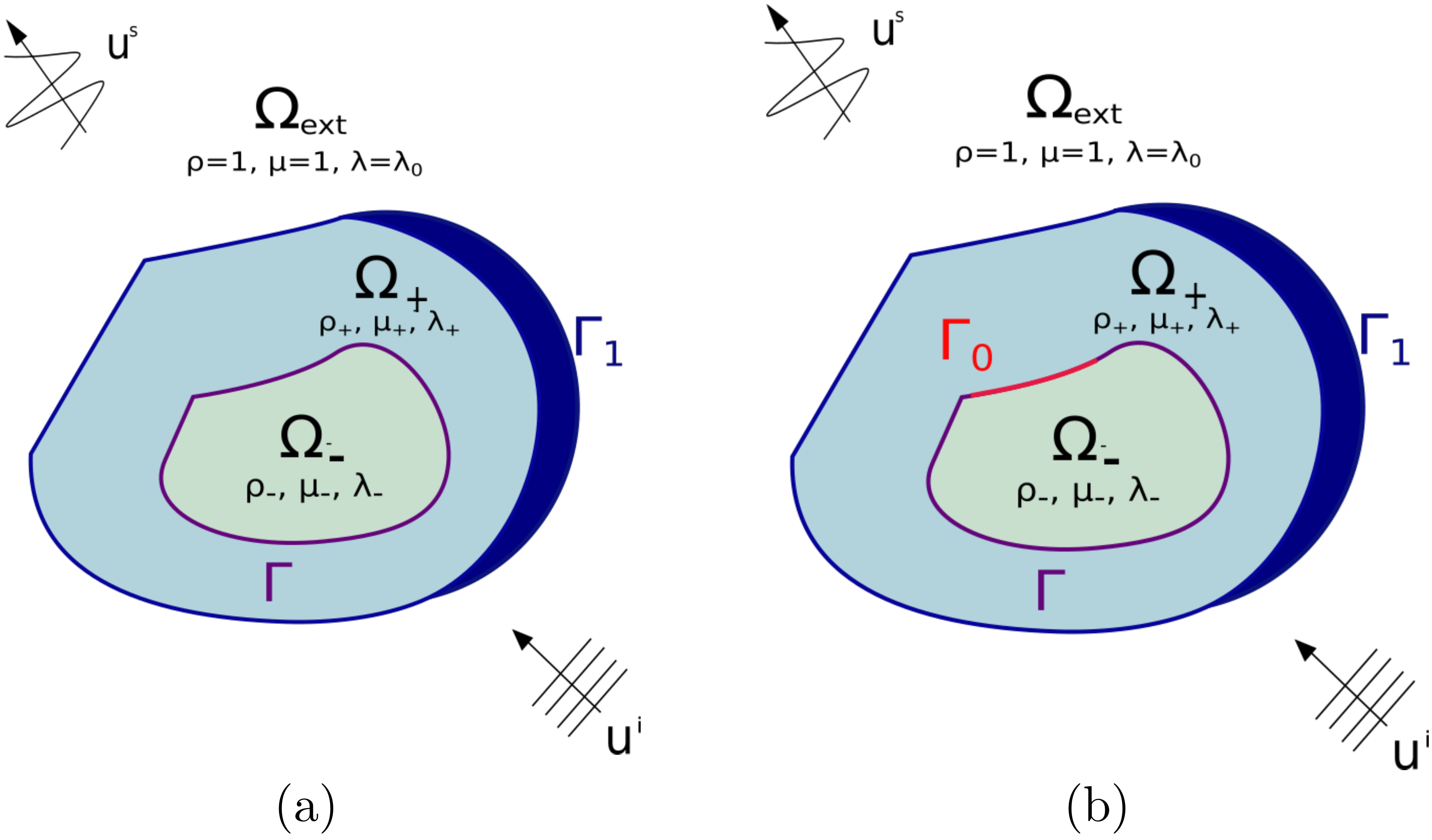} 
\caption{\small Scattering of plane (P- and S-) waves in a layered domain:~(a)~transversal cut of the background (undamaged) domain, consisting of three layers $\Omega_-$, $\Omega_+$, and $\Omega_{ext}$, and (b) its affiliated damaged configuration, where the elastic interface $\Gz$ appears at the bi-material interface $\G$.} \label{fig_elastic1}
\end{figure} 
In the \emph{background} domain shown in Fig.~\ref{fig_elastic1}~(a), bimaterial interfaces $\G_1$ and $\G$ are intact and the continuity condition applies for both the displacement and the traction. At a later stage, however, a subset of $\G \cup \G_1$ is evolved into an interfacial damage zone i.e., elastic interface $\Gz\subset\G \cup \G_1$, in the so-called damaged configuration. According to Fig.~\ref{fig_elastic1}~(b), $\Gz$ is an open surface with relative boundary $\partial \Gz$ on $\G$. The three layers $\Omega_+$, $\Omega_-$, and $\Omega_{ext}$ are distinct in terms of their material properties, and characterized by their density $\rho_\kappa$ and elasticity tensor $\CC^\kappa$, normalized via the respective properties of $\Omega_{ext}$, with $\kappa \in \lbrace +, -, 0 \rbrace$ respectively indicating the domains $\Omega_+$, $\Omega_-$, and $\Omega_{ext}$ . The fourth-order elasticity tensor $\CC^\kappa$, specifying the linear constitutive relation between elastic strain and stress tensors, is given as the following~\cite{segel},
\begin{equation}\label{CC}
\CC^\kappa_{ijk\ell}= \lambda_\kappa \delta_{ij}\delta_{\ell k} + \mu_\kappa (\delta_{i\ell}\delta_{jk}+\delta_{ik}\delta_{j\ell}), \quad {i,j,k,\ell} \in \lbrace 1,2,3 \rbrace, \quad \kappa \in \lbrace +, -, 0 \rbrace,
\end{equation}
where $\mu_\kappa, \lambda_\kappa\!>\!0$ represent the Lam\'{e}'s parameters. In light of the earlier remark on scaling, $\rho_0,\,\lambda_0$ and $\mu_0$ of the exterior domain $\Omega_{ext}$ are considered constant with values $\rho_0 = 1$, $\mu_0 = 1$, and $\lambda_0 = 2 \nu_0/(1-2 \nu_0)$ where $\nu_0$ is the Poisson's ratio. The respective material constants in $\Omega_-$ and $\Omega_+$ are known \emph{piece-wise smooth} functions, with possible jumps at the interfaces $\G,\,\G_1$ satisfying:
\begin{equation}\label{mon-cond}
\begin{aligned}
&(\lambda_+ -\, \lambda_-)(\mu_+ -\, \mu_-)\ge 0, \qquad &\text{on} \, \G, \\*[0.5 mm]
&(\lambda_+ -\, \lambda_0)(\mu_+ -\, \mu_0)\ge 0, \qquad &\text{on} \, \G_1,
\end{aligned}
\end{equation} 
where $\lambda_\pm,\mu_\pm$ are the Lam\'{e} coefficients $\lambda,\mu$ in $\Omega_\pm$, respectively.\\
\begin{remark}
The monotonicity condition (\ref{mon-cond}) for the Lam\'{e} parameters in transmission problems is necessary in order to have a unique representation of the fields in terms of single- and double-layer potentials in the context of Lipschitz domains (see \cite{escauriaza}). 
\end{remark}

Next, the forward elastic-wave scattering problem is formulated in the frequency domain. To this end, let $\Sp^2$ denote the unit sphere centered at the origin. For a given triplet of vectors $\bd\in\Sp^2$ and~$\bq_p,\bq_s\!\in\mathbb{R}^3$ such that $\bq_p\!\parallel\bd$ and~$\bq_s\!\perp\!\bd$, the domain is illuminated by a combination of compressional and shear plane waves 
\begin{equation}\label{plwa}
\ubold^i(\bxi) ~=~ \bq_p \exs e^{i k_p \bxi \cdot \bd} \:\oplus\: \bq_s \exs e^{i k_s \bxi \cdot \bd}, \qquad \bxi \in \mathbb{R}^3
\end{equation}
propagating in direction~$\bd$, where
\begin{equation}\label{ks_kp}
k_s^2 = \frac{\omega^2}{\mu_0}\quad\text{and}\quad k_p^2=\frac{\omega^2}{\lambda_0 + 2\mu_0},
\end{equation}
denote the respective wave numbers, with $\omega$ being the excitation frequency. The interaction of $\ubold^i$ with $\Omega = \Omega_+ \! \cup \Omega_-$ in the damaged configuration gives rise to the total field $\ubold\in H^1_{loc}(\R^3\setminus\overline{\Gz})^3$  is governed by
\begin{equation} \label{pr1}
\begin{aligned}
&\Delta^* \ubold(\bxi) \,+\, \rho \omega^2 \ubold(\bxi) \,=\, \bold{0}, \quad &  \bxi \in \R^3\!\setminus\! \{\overline{\G \cup \G_1}\}, \\*[0.5 mm]
&\dnu \ubold^-(\bxi) \,+\, \dnu \ubold^ +(\bxi) \,=\, \bold{0}, \, & \bxi \in \G, \\*[0.25 mm]
&\dnu \ubold^-(\bxi) \,=\, \KK(\bxi)[\ubold](\bxi), \,& \bxi \in \Gz, \\*[0.25 mm]
&\ubold^-(\bxi) \,=\, \ubold^+(\bxi),  \,& \bxi \in \G \backslash \overline{\Gz}, \\*[0.5 mm]
&\dnu \ubold^+(\bxi) \,+\, \dnu \ubold^\circ(\bxi) \,=\, \bold{0}, \quad \ubold^+(\bxi) \,=\, \ubold^\circ(\bxi),  \,& \bxi \in \G_1, 
\end{aligned}
\end{equation}
where $\Delta^*$ stands for the Lam\'{e} operator defined by
\begin{equation}\label{D*}
\Delta^*\ubold\nxs:=\nxs \nabla \cdot (\CC(\bxi)\!:\! \nabla \ubold)\nxs=\nabla\cdot \big[ 2\mu(\bxi)\, \boldsymbol{\varepsilon}(\ubold) + \lambda(\bxi) \nabla\!\cdot\!\ubold \,\exs \Id \big], \,\, \boldsymbol{\varepsilon}(\ubold)\nxs:=\nxs \frac{1}{2} \big(\nabla\ubold + \hspace{-0.75mm} \nabla\ubold^{T} \big),
\end{equation}
with $\boldsymbol{\varepsilon}(\ubold)$ introducing the strain tensor for future reference; $\ubold^+$, $\ubold^-$, and $\ubold^\circ$ denote the total field $\ubold$ respectively in $\Omega_+$, $\Omega_-$, and $\Omega_{ext}$; also,
\begin{equation} \label{nD}
\dnu\ubold^\kappa(\bxi):= \bnu(\bxi) \cdot \CC^\kappa(\bxi) : \nabla \ubold^\kappa(\bxi), \qquad  \kappa \in \lbrace  \circ, +, - \rbrace,    \qquad \bxi \rightarrow \G \cup \G_1,
\end{equation}
is the co-normal derivative (or ``traction") along the bi-material interfaces, wherein $\bnu$ defines the outward unit normal vector; $[\ubold](\bxi):=\ubold^+(\bxi)-\ubold^-(\bxi)$ is the jump in displacement field across $\bxi \in \Gz$. In (\ref{pr1}), the interactions between the two faces of damaged interface $\Gz$, due to e.g.~corrosion and surface roughness as a by-product of fracturing, is modeled by the common linear slip interfacial condition~\cite{fatemeh},      
\begin{equation}
\dnu \ubold(\bxi) \,=\, \KK(\bxi) [\ubold](\bxi), \, \qquad  \bxi \in \Gz,
\end{equation}
where $\KK\in L^\infty(\Gz)^{3\times3}$ is the heterogeneous interfacial stiffness matrix, assumed to be symmetric light of the reciprocity principle. To complete the problem statement (\ref{pr1}), consider the decomposition of the total field $\ubold$ in terms of the known incident wavefield $\ubold^i\in H^1_{loc}(\R^3)^3$, according to (\ref{plwa}), and the scattered waveform $\ubold^{sc}\in H^1(\R^3\setminus\overline{\Gz})^3$ i.e.~$\ubold=\ubold^{sc} + \ubold^i$. Thus-obtained scattered field $\ubold^{sc}$ is further decomposed in $\Omega_{ext}$, into P-wave $\ubold^p$ and S-wave $\ubold^s$ components as the following:
\begin{equation}\label{ups}
\begin{aligned}
&\ubold^p(\bxi)\!:=\!\frac{1}{k^2_s - k^2_p} (\Delta + k^2_s)\ubold^{sc}(\bxi), & \qquad  \bxi \in \Omega_{ext},\\
&\ubold^s(\bxi)\!:=\!\frac{1}{k^2_p - k^2_s} (\Delta + k^2_p)\ubold^{sc}(\bxi), & \qquad  \bxi \in \Omega_{ext}. 
\end{aligned}
\end{equation}
In this setting, the following Kupradze radiation conditions~\cite{kupradze}, imposed on the scattered field $\ubold^{sc}$, complements (\ref{pr1}) and completes the problem statement,
\begin{equation}\label{kup-cond}
\begin{aligned}
&\frac{\partial \ubold^p}{\partial r}-ik_p\ubold^p \,=\, o(r^{-1}), \\*[0.5 mm]
&\frac{\partial \ubold^s}{\partial r}-ik_p\ubold^s \,=\, o(r^{-1}), \quad \text{as} \,\, r \rightarrow \infty,
\end{aligned}
\end{equation}
where $r=|\bxi|$ and the limits are uniform with respect to $\hat\bxi=\bxi/|\bxi|$.

\section{Well-posedness of the forward problem}\label{sec2}

This section investigates the well-posedness of direct scattering problem (\ref{pr1})-(\ref{kup-cond}) in a layered medium, by rigorously extending the related theorems in homogeneous domains~\cite{fatemeh}. This is accomplished within the framework of Fredholm theory where the field equations (\ref{pr1})-(\ref{kup-cond}) are recast in the variational form in terms of $\ubold\in H^1(B_R\setminus\overline{\G}_0)^3$ within an arbitrary ball $B_R\subset \R^3$ of radius $R>0$ such that $\overline{\Omega}\subset B_R$. Thus, one obtaines
\begin{eqnarray}
A(\ubold,\vbold) ~=~ {\mathcal L}(\vbold), \,\qquad \forall \vbold , \ubold\in H^1(B_R\setminus\overline{\G}_0)^3,\label{varform}
\end{eqnarray}
where
\begin{equation}\label{def-A}
\begin{aligned}
A(\ubold,\vbold)~=~&\int_{B_R \backslash \Gz} \!\!\! \overline{\nabla\vbold}:\CC:\nabla\ubold\dy \,-\,  \omega^2 \int_{B_R \backslash \Gz} \rho\, \overline{\vbold}\cdot\ubold\,\dy \,\,+ \\*[1 mm]
 +\, & \int_{\Gz} [\overline{\vbold}] \cdot \KK \left[ \ubold\right] \dsy \,-\, \int_{ S_R}  \overline{\vbold} \cdot \Top \ubold \dsy,\\*[2 mm]
 {\mathcal L}(\vbold)~=~&\int_{ S_R}\Big\{\overline{\vbold} \cdot \dnu \ubold^i - \overline{\vbold} \cdot \Top \ubold^i \Big\} \dsy.
 \end{aligned}
\end{equation}
Here, $S_R$ represents the boundary of $B_R$, and $\Top:\Hh\rightarrow\Hmh$ is a Dirichlet-to-Neumann (DtN) operator
\begin{equation}\label{DtN}
\Top (\bphi)(\bxi) ~\colon \!\!\! =~ \dnu \ubold_\phi (\bxi), \qquad  \bxi \in { S_R},
\end{equation}
where $\ubold_\phi$ satisfies
\begin{equation}
\begin{aligned}
&\Delta^*\ubold_\phi(\bxi) \,+\, \omega^2\ubold_\phi(\bxi) ~=~ \bold{0}, \qquad   &\bxi \in  \R^3\!\setminus\!\overline{B_R},\\*[1 mm]
&\ubold_\phi(\bxi)~=~\bphi(\bxi), \qquad & \bxi \in  S_R,
\end{aligned}
\end{equation}
complemented with the Kupradze radiation conditions at infinity~\cite{kupradze}, similar to (\ref{kup-cond}). It should be noted that the DtN operator $\Top$ is well-posed and bounded~\cite{CC06}. For clarity of discussion, let us define the following function spaces.
\begin{equation}
\begin{aligned}
& H^{\pm \frac{1}{2}}(\Gz) := \Big\{u|_{\Gz}\colon~u\in H^{\pm \frac{1}{2}}(\G)\Big\},\\*[1mm]
& \widetilde{H}^{\pm \frac{1}{2}}(\Gz) := \Big\{u\in H^{\pm \frac{1}{2}}(\G)\colon~\supp\!\!(u) \subset \overline\Gz\Big\},
\end{aligned}
\end{equation}
wherein $\supp\!\!(u)$ is the \emph{essential} support of $u$ defined as the largest relatively closed subset of $\G$ such that $u=0$ almost everywhere in $\G \!\setminus\! \supp\!\!(u)$. One may note that $H^{1/2}(\Gz)$ and $\widetilde{H}^{1/2}(\Gz)$ can be considered Hilbert spaces, providing that they are endowed with a restricted $H^{1/2}(\G)-$inner product. Additionally, one may recall that $H^{-1/2}(\Gz)$ and $\widetilde{H}^{-1/2}(\Gz)$ are respectively the dual spaces of $\widetilde{H}^{1/2}(\Gz)$ and $H^{1/2}(\Gz)$. Accordingly, the following embeddings hold
\begin{eqnarray}\label{embed}
&\widetilde{H}^{1/2}(\Gz)\subset H^{1/2}(\Gz)\subset L^2(\Gz)  \subset \widetilde{H}^{-1/2}(\Gz) \subset H^{-1/2}(\Gz).&
\end{eqnarray}

\begin{remark}{\em 
If the total field $\ubold \in H^1(B_R\setminus\overline{\G}_0)^3$, then the displacement jump $\sjump{\ubold}\in \widetilde{H}^{1/2}(\Gz)^3$. Also, if the heterogeneous stiffness matrix $\KK\in L^\infty(\Gz)^{3\times3}$, then $\KK\sjump{\ubold}\in H^{-1/2}(\Gz)^3$ (see Corollary 8.8 in \cite{kress}). In this setting, the term 
$$\int_{\Gz} [\overline{\vbold}](\text{\bf y}) \! \cdot  \KK(\text{\bf y}) \! \left[ \ubold\right]\!(\text{\bf y}) \dsy$$
in \eqref{def-A} can be interpretted as a duality pairing $\langle\cdot,\cdot\rangle_{H^{-1/2}(\Gz)^3,\widetilde{H}^{1/2}(\Gz)^3}$, pivoting with respect to the $L^2(\Gz)^3$ inner product. In what follows, $\Re(\cdot)$ (resp.~$\Im(\cdot)$) stands for the real (resp.~imaginary) part of its argument.
}
\end{remark}
\begin{lemma}\label{lemmaDtN}
The DtN operator $\Top$ can be decomposed as $\Top=\Top^c+\Top^0$, where $\Top^c:\Hh\rightarrow\Hmh$ is compact, and  $-\Top^0:\Hh\rightarrow\Hmh$ is non-negative and self-adjoint. Moreover,
\begin{eqnarray} \label{lmma2.1}
&\Im \langle \Top (\boldsymbol{\phi}),\,\boldsymbol{\phi}\rangle_{S_R} >0, \qquad  \forall  \boldsymbol{\phi}\in \Hh\!\setminus\!\{\boldsymbol{0}\}.&
\end{eqnarray}
\end{lemma}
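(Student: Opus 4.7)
The plan is to establish the lemma in two stages: first, the operator decomposition $\Top = \Top^c + \Top^0$ via a modal expansion on $S_R$, and second, the strict positivity of $\Im\,\langle \Top(\bphi),\bphi\rangle_{S_R}$ via Green's identity combined with the Kupradze radiation conditions.

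For the decomposition, I would expand $\bphi\in H^{1/2}(S_R)^3$ in a basis of vector spherical harmonics adapted to the Lam\'{e} operator, so that the exterior radiating solution $\ubold_\phi$ in $\R^3\setminus\overline{B_R}$ is represented mode-by-mode through spherical Hankel functions $h_n^{(1)}(k_p r)$ and $h_n^{(1)}(k_s r)$ for the compressional and shear components, respectively. On each mode, the DtN map acts as a $3\times 3$ symbol $\sigma_n(\omega)$. Using the classical asymptotic $h_n^{(1)\prime}(z)/h_n^{(1)}(z) = -(n{+}1)/z + O(n^{-1})$ as $n\to\infty$, one splits $\sigma_n(\omega) = \sigma_n^0 + \sigma_n^c(\omega)$, where $\sigma_n^0$ is the symbol of the DtN operator for the \emph{elastostatic} exterior problem on $B_R^c$ (which is real-valued, Hermitian, and sign-definite with respect to the $L^2(S_R)^3$ pairing), while $\|\sigma_n^c(\omega)\|=O(n^{-1})$. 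Assembling the first family yields the bounded self-adjoint operator $\Top^0$ with $-\Top^0\geq 0$. The $O(n^{-1})$ decay of the residual symbol, together with the compact Sobolev embedding $H^{3/2}(S_R)\hookrightarrow H^{-1/2}(S_R)$, guarantees that $\Top^c\colon \Hh\to\Hmh$ is compact.

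For the strict positivity, let $\ubold_\phi$ denote the exterior radiating solution with trace $\bphi$ on $S_R$. Applying Green's first identity for the Lam\'{e} operator in the annulus $B_{R'}\setminus\overline{B_R}$ with $R'>R$, and observing that the volume term
\begin{equation*}
\int_{B_{R'}\setminus\overline{B_R}} \Big\{\overline{\nabla\ubold_\phi}\! :\! \CC^0\! :\! \nabla\ubold_\phi \,-\, \omega^2|\ubold_\phi|^2\Big\}\, dV
\end{equation*}
is real-valued (thanks to the major and minor symmetries of $\CC^0$), yields
\begin{equation*}
\Im\,\langle \Top(\bphi),\,\bphi\rangle_{S_R} \,=\, \Im\! \int_{S_{R'}} \overline{\ubold_\phi}\cdot \dnu\ubold_\phi\, dS.
\end{equation*}
Passing to the limit $R'\to\infty$ while invoking the Kupradze conditions~(\ref{kup-cond}) together with the pointwise orthogonality of the P- and S-components of the far field produces a non-negative quantity proportional to $k_p\|\ubold^{p,\infty}\|^2_{L^2(\Sp^2)}+k_s\|\ubold^{s,\infty}\|^2_{L^2(\Sp^2)}$. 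Should this vanish for some $\bphi\neq\bold{0}$, the annihilation of both far-field patterns, combined with Rellich's lemma for elastic waves and unique continuation across $S_R$, would force $\ubold_\phi\equiv\bold{0}$ in $\R^3\setminus\overline{B_R}$, so $\bphi=\bold{0}$ on $S_R$, a contradiction.

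The principal obstacle lies in the modal decomposition step: unlike the scalar Helmholtz setting, the co-normal derivative $\dnu$ couples the compressional and shear components of each vector spherical harmonic via tangential derivatives on $S_R$, so the symbol $\sigma_n(\omega)$ is a genuinely non-diagonal matrix. Carrying out its asymptotic expansion uniformly in $n$, and verifying that the residual $\sigma_n^c$ satisfies the required $O(n^{-1})$ bound, demands careful bookkeeping with the recurrence and derivative identities for the spherical Hankel functions; this is where the bulk of the technical work will be concentrated.
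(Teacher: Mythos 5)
Your positivity argument — Betti's identity in an annulus, take $R'\to\infty$ using the Kupradze conditions to reduce to far-field norms, then Rellich plus unique continuation — is correct and is the same standard route the paper (implicitly, via its citation) takes. The decomposition, however, goes a genuinely different way. You build $\Top^0$ mode-by-mode through a vector spherical-harmonic expansion, identify it with the elastostatic exterior DtN symbol, and deduce compactness of the residual from an $O(n^{-1})$ symbol bound together with the compact embedding $H^{3/2}(S_R)^3\hookrightarrow H^{-1/2}(S_R)^3$. The paper instead defines $\Top^0$ variationally via the Riesz representation theorem,
\begin{equation*}
\langle \Top^0\bphi,\,\boldsymbol{\psi}\rangle_{S_R}\;:=\;-\int_{B_{R_0}\setminus B_R}\overline{\nabla\ubold_{\boldsymbol{\psi}}}:\CC:\nabla\ubold_{\bphi}\,dV,
\end{equation*}
for a fixed annulus with $R_0>R$; this form is automatically Hermitian and non-positive (major symmetry and positivity of $\CC$ on symmetric tensors), and the remainder $\Top^c$ is read off from Betti's identity as boundary and lower-order volume terms that gain regularity from interior elliptic estimates for the radiating solution, hence compactness with no special-function asymptotics at all. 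The variational definition therefore buys a clean, coordinate-free and dimension-independent proof; your modal route buys an explicit symbol (useful, say, for truncated DtN boundary conditions), but — as you yourself flag — the coupling of the P- and S-components by $\dnu$ makes $\sigma_n(\omega)$ a genuinely nondiagonal $3\times 3$ matrix, and establishing the $O(n^{-1})$ residual bound uniformly in $n$ for the vector Lam\'{e} operator is substantial unfinished work, not a bookkeeping afterthought. As written, the decomposition half of your proof is a plausible outline with a real gap at its hardest step; the annulus/Riesz definition above closes it with much less machinery.
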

\begin{proof}
The argument is similar to the proof of Lemma~1~in~\cite{fatemeh}, where $\Top^0$ is defined according to the Riesz representation theorem by
$$ \langle \Top^0 (\boldsymbol{\phi}),\,\boldsymbol{\psi}\rangle_{S_R}\,:=\, - \int_{B_{R_{0}} \!\setminus B_R } \!\!\! \overline{\nabla\ubold_{\boldsymbol{\psi}}}:\CC:\nabla\ubold_{\boldsymbol{\phi}}\dy,$$
with $B_{R_{0}}$ being a ball of radius $R_0 > R$.
\end{proof}

\begin{theorem}(well-posedness of the forward scattering problem)
Providing that $\KK\in L^\infty(\Gz)^{3\times 3}$ and $\Im([\overline{\vbold}]\!\cdot\!\KK[\vbold])\leq 0$,  $\forall [\vbold]\in \tilde{H}^{1/2}(\Gz)^3$, the variational form (\ref{varform}) is well-posed. In other words,~$\forall\ubold^i \in H^1_{loc}(\R^3)^3$, there exists a unique solution $\ubold\in {H}^1(B_R\setminus\overline{\G}_0)^3$ that \emph{continuously} depends on $\ubold^i$.
\end{theorem}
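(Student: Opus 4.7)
The plan is to establish existence and uniqueness of $\ubold\in H^1(B_R\setminus\overline{\Gz})^3$ for \eqref{varform} via the Fredholm alternative. I would split $A = A_0 + B$, where the candidate coercive form is obtained by flipping the sign of the volume mass term and replacing $\Top$ with the non-positive self-adjoint part $\Top^0$ supplied by Lemma~\ref{lemmaDtN}:
\[
A_0(\ubold,\vbold) := \int_{B_R\setminus\Gz}\overline{\nabla\vbold}\!:\!\CC\!:\!\nabla\ubold\dy + \omega^2\!\int_{B_R\setminus\Gz}\rho\,\overline{\vbold}\cdot\ubold\dy - \int_{S_R}\overline{\vbold}\cdot\Top^0\ubold\dsy,
\]
while the remainder
\[
B(\ubold,\vbold) := -2\omega^2\!\int_{B_R\setminus\Gz}\rho\,\overline{\vbold}\cdot\ubold\dy + \int_{\Gz}[\overline{\vbold}]\cdot\KK[\ubold]\dsy - \int_{S_R}\overline{\vbold}\cdot\Top^c\ubold\dsy
\]
collects the contributions that should induce a compact operator.

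For coercivity of $A_0$, I would apply Korn's inequality piecewise in the Lipschitz subdomains $B_R\setminus\overline{\Omega}$, $\Omega_+$, and $\Omega_-$ to convert the pointwise nonnegative density $2\mu|\boldsymbol{\varepsilon}(\ubold)|^2+\lambda|\nabla\!\cdot\!\ubold|^2$ into a lower bound on $\|\nabla\ubold\|_{L^2}^2$, modulo a lower-order $L^2$-term absorbed by $\omega^2\!\int\rho|\ubold|^2$. Since $-\langle\Top^0\ubold,\ubold\rangle_{S_R}\!\geq\! 0$ by Lemma~\ref{lemmaDtN}, this yields $\Re A_0(\ubold,\ubold)\geq \alpha\,\|\ubold\|_{H^1(B_R\setminus\overline{\Gz})^3}^2$ for some $\alpha>0$. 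Compactness of the operator induced by $B$ follows from Rellich's embedding $H^1\hookrightarrow L^2$ (volume term), compactness of $\Top^c$ (Lemma~\ref{lemmaDtN}), and compactness of the jump map $\ubold\mapsto[\ubold]$ from $H^1(B_R\setminus\overline{\Gz})^3$ into $L^2(\Gz)^3$ combined with multiplication by $\KK\in L^\infty$ and the duality pairing $\langle\cdot,\cdot\rangle_{H^{-1/2},\widetilde{H}^{1/2}}$ pivoted through \eqref{embed}. Hence $A$ satisfies a G\r{a}rding-type inequality and the problem is Fredholm of index zero, so existence reduces to uniqueness.

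Setting $\ubold^i=\boldsymbol{0}$ and taking $\vbold=\ubold$, symmetry of $\CC$ and reality of the mass term give
\[
0 = \Im A(\ubold,\ubold) = \Im\!\int_{\Gz}[\overline{\ubold}]\cdot\KK[\ubold]\dsy \;-\; \Im\langle\Top\ubold,\ubold\rangle_{S_R}.
\]
The first term is $\leq 0$ by hypothesis and the second is $\geq 0$, strictly so unless $\ubold|_{S_R}=\boldsymbol{0}$ by \eqref{lmma2.1}; hence $\ubold|_{S_R}=\boldsymbol{0}$. Since $\ubold=\ubold^{sc}$ satisfies the constant-coefficient Lam\'e equation in $\R^3\setminus\overline{B_R}$ together with the Kupradze conditions \eqref{kup-cond}, exterior Dirichlet uniqueness for elastodynamics yields $\ubold\equiv\boldsymbol{0}$ in $\R^3\setminus\overline{B_R}$. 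The main obstacle is propagating this null field inward through the composite interior: unique continuation in the constant-coefficient annulus $\Omega_{ext}\cap B_R$ extends $\ubold^\circ\equiv\boldsymbol{0}$ up to $\G_1$, the welded transmission conditions on $\G_1$ in \eqref{pr1} transfer vanishing Cauchy data to $\ubold^+$, and unique continuation in $\Omega_+$ for the Lam\'e system with piecewise smooth coefficients (whose layer-potential representations remain invertible thanks to \eqref{mon-cond}, cf.~\cite{escauriaza}) yields $\ubold^+\equiv\boldsymbol{0}$; the same argument across $\G\setminus\overline{\Gz}$ then gives $\ubold^-\equiv\boldsymbol{0}$. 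Injectivity of $A$ combined with the Fredholm alternative then delivers existence and continuous dependence of $\ubold$ on $\ubold^i$.
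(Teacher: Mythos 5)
Your argument is correct and follows essentially the same route as the paper: split $A=A_0+B$, show $A_0$ coercive via Korn and the sign of $\Top^0$, show $B$ compact via Rellich, compact traces, and compactness of $\Top^c$, invoke the Fredholm alternative, and then prove injectivity by taking the imaginary part and using \eqref{lmma2.1} together with the sign hypothesis on $\KK$. The only cosmetic difference is that you keep $\omega^2\!\int\rho|\ubold|^2$ in $A_0$ while the paper puts $\omega^2\!\int|\ubold|^2$ there and absorbs $-\omega^2\!\int(1+\rho)\overline{\vbold}\cdot\ubold$ into $B$ (both splits work since $\rho>0$), and you usefully spell out the layer-by-layer propagation of the null Cauchy data across $\G_1$ and $\G\setminus\overline{\Gz}$, a step the paper compresses into a single appeal to unique continuation.
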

\begin{proof}
Let us decompose $A$, in the variational form (\ref{varform}), as follows: 
$$A(\ubold,\vbold)~=~A_0(\ubold,\vbold)\,+\,B(\ubold,\vbold), \qquad \forall\ubold,{\vbold}\in H^1(B_R\setminus\overline{\G}_0)^3,$$
\begin{equation}\nonumber
\begin{aligned}
&A_0(\ubold,\vbold)~=\,\int_{B_R\setminus\overline{\G}_0} \overline{\nabla\vbold}:\CC:\nabla\ubold\dy \,+\, \omega^2 \int_{B_R\setminus\overline{\G}_0} \overline{\vbold}\cdot\ubold\dy 
\, - \int_{ S_R} \Top^0(\ubold)\cdot\overline{\vbold} \dsy, \\*[1 mm]
& B(\ubold,\vbold)~=\, -\,\omega^2\! \int_{B_R\setminus\overline{\G}_0}\!(1+\rho) \overline{\vbold}\cdot\ubold\dy
\,+\int_{\Gz} [\overline{\vbold}] \cdot \KK \left[ \ubold\right]\dsy \,- \int_{ S_R} \overline{\vbold} \cdot \Top^c (\ubold) \dsy.
\end{aligned}
\end{equation} 
By invoking the Korn's inequality~\cite{mclean} and in view of (\ref{D*}), one obtains
\begin{equation} \nonumber
\begin{aligned}
|A_0(\ubold,\ubold)| &~=\,\int_{B_R\setminus\overline{\G}_0} \Big\lbrace 2 \mu  |\boldsymbol{\varepsilon(\ubold)}|^2 \,+\, \lambda  |\nabla\!\cdot\! \ubold|^2 +  \omega^2  |\ubold|^2 \Big\rbrace \dy 
\,-\, \int_{ S_R} \Top^0 (\ubold)\cdot\overline{\ubold} \dsy \geq \\*[1 mm]
&\,\,\,\geq \int_{B_R\setminus\overline{\G}_0} \Big\lbrace 2 \mu  |\boldsymbol{\varepsilon(\ubold)}|^2 \,+  \omega^2  |\ubold|^2 \Big\rbrace \dy 
\,\geq\, C \norm{\ubold}^2_{H^1(B_R\setminus\overline{\G}_0)^3},  
\end{aligned}
\end{equation}
where $C>0$ is a constant independent of $\ubold$. This implies that $A_0:H^1(B_R\setminus\overline{\G}_0)^3\times H^1(B_R\setminus\overline{\G}_0)^3\rightarrow\C$ is coercive. Moreover, for all $\ubold,\vbold\in H^1(B_R\setminus\overline{\G}_0)^3$,
\begin{equation} \label{Buv}
\begin{aligned}
 |B(\ubold,\vbold)|\,\leq\,&\,\omega^2\,(1+\norm{\rho}_\infty) \norm{\ubold}_{L^2(B_R\setminus\overline{\G}_0)^3} \norm{\vbold}_{L^2(B_R\setminus\overline{\G}_0)^3} \\*[1 mm]
 &+\, \norm{\KK}_\infty \norm{[\ubold]}_{L^2(\Gz)^3}\norm{[\vbold]}_{L^2(\Gz)^3} \,+\,  \norm{\Top^c (\ubold)}_{H^{-\frac{1}{2}}( S_R)^3}\norm{\vbold}_{H^{\frac{1}{2}}( S_R)^3}.
 \end{aligned}
\end{equation}
Thus, for all test functions ${\vbold}$ where $\norm{\vbold}_{H^1(B_R\setminus\overline{\G}_0)^3}=1$, (\ref{Buv}) can be recast as
\begin{equation}
 |B(\ubold,\vbold)|\leq C_0 \norm{\ubold}_{L^2(B_R\setminus\overline{\G}_0)^3} + C_1 \norm{\KK}_\infty \norm{[\ubold]}_{L^2(\Gz)^3} + C_2 \norm{\Top^c (\ubold)}_{\Hmh},
\end{equation}
where $C_0=\omega^2(1+\norm{\rho}_\infty)$; $C_1$ is twice the norm of the trace operator from $H^1(B_R\setminus\overline{\G}_0)^3 \rightarrow L^2(\G_0)^3$; and $C_2>0$ stands for the norm of the compact  trace operator from $H^1(B_R\setminus\overline{\G}_0)^3 \rightarrow H^{1/2}( S_R)^3$. Now, let $\{\ubold_n\}$ be a sequence that converges weakly to $\bold{0}$ in $H^1(B_R\setminus\overline{\G}_0)^3$, then, from the compactness of $H^1(B_R\setminus\overline{\G}_0)^3\subset L^2(B_R\setminus\overline{\G}_0)^3$, the boundedness of the trace operator $H^1(B_R\setminus\overline{\G}_0)^3 \rightarrow H^{1/2}(S_R \cup {\G}_0)^3$, together with the compactness of the embedding $\widetilde{H}^{1/2}(\G_0)^3\subset L^2(\G_0)^3$, and the compactness of $\Top^c:\Hh\rightarrow\Hmh$, we conclude that $B(\ubold_n,\vbold) \rightarrow 0$ for all $\vbold$ in $H^1(B_R\setminus\overline{\G}_0)^3$. Hence, $B(\cdot,\cdot)$ is a compact sesquilinear form.
Therefore, $A$ is the sum of a coercive and a compact sequilinear form, and thus, the forward scattering problem is of Fredholm type, i.e., (\ref{pr1})-(\ref{kup-cond}) is well-posed providing that its solution is unique. The latter is established in the sequel. \\
Let us suppose that ${\mathcal L}(\ubold)=\boldsymbol{0}$ in (\ref{varform}), then the imaginary part of $A(\ubold,\ubold)$, according to (\ref{def-A}), takes the following form:
\begin{equation} \label{Imu}
 \int_{\Gz} \Im( \KK \left[ \ubold\right]\cdot[\overline{\ubold}])\dsy - \Im\Big\{\int_{ S_R} \Top (\ubold)\cdot\overline{\ubold}\dsy\Big\} ~=~0.
\end{equation}
Invoking (\ref{lmma2.1}) and recalling the negative definiteness of stiffness matrix $\KK(\bxi)$, from the theorem statement, one concludes from (\ref{Imu}) that $\ubold=\bold{0}$ on $S_R$. Therefore, $\ubold(\bxi)=\bold{0}$ in $\bxi \in B_R$, via the unique continuation principle, which completes the proof.  
\end{proof}
\section{Inverse solution}\label{sec3}
This section aims to develop a robust and active imaging algorithm to reconstruct the damaged portion $\Gz$ of bimaterial interfaces $\G \exs \cup \exs \G_1$ using measured scattered data $\ubold^{sc}$ corresponding to every incident P- and S-wave propagating in direction $\bd\in\Sp^2$ and polarized along $\bq\exs\in\exs\R^3$. 
The full-waveform inversion will be performed non-iteratively and in a particularly expeditious manner. This is accomplished by rigorously adapting the Kirsch's Factorization Method (FM)~\cite{kirsch}, originally developed in the context of electromagnetism, for imaging fractures in layered composites by way of elastic waves. The ensuing developments are performed in the frequency domain under the premise that the background domain, i.e.~the configuration in absence of the sought-for damage $\Gz$, is given in terms of its structure and material properties. In other words, it is assumed that $\Omega_{ext}(\mu_0, \rho_0, \lambda_0)$, $\Omega_{+}(\mu_+, \rho_+, \lambda_+)$, and $\Omega_{-}(\mu_-, \rho_-, \lambda_-)$ are known a-priori. This knowledge, particularly the intact topology of bimaterial interfaces, will be used within FM framework to minimize the number of sampling points required to locate and characterize advancing interfacial damages.
\vspace{-2 mm}
\subsection{Preliminary concepts}

{\it Elastic Herglotz wave function.}~For a given density $\g \in L^2(\Sp^2)^3$, we consider the unique decomposition 
 \vspace{-2 mm}
\begin{equation}\label{herden}
\g \;:=\; \g_p \oplus\, \g_s, 
\end{equation}
such that $\g_p(\bd)\!\parallel\!\bd$ and $\,\g_s(\bd)\!\perp\!\bd$, $\,\bd\in\Sp^2$. In dyadic notation, one has 
\begin{equation}\label{freef}
\g_p(\bd) := (\bd \nxs \otimes \nxs \bd \exs) \cdot \g(\bd) \quad ~~\text{and}~~ \quad   \g_s(\bd) := (\Id - \bd \nxs \otimes \nxs \bd \exs) \cdot \g(\bd).
\end{equation}
In this setting, the elastic Herglotz wave function~\cite{Dassios1995} is defined as 
\begin{equation}\label{HW}
\ubold_\g(\bxi) ~: =~  \int_{\Sp^2} \g_p(\bd) \exs  e^{i k_p \bd \cdot \bxi} \,\, \text{d}S_{\bd} ~\; \textcolor{black}{\oplus} \, \int_{\Sp^2} \g_s(\bd) \exs  e^{i k_s \bd \cdot \bxi} \,\, \text{d}S_{\bd},  \qquad \bxi \in \R^3, 
\end{equation}
in terms of {the compressional and shear wave densities~$\g_p$ and~$\g_s$, respectively.

\vspace{1 mm}
\noindent {\it Incident plane-wave tensor.}~In the case of single-incident illumination where the domain is excited by a plane wave propagating in direction $\bd \in \Sp^2$ and polarized along $\bq := \bq_p \oplus \bq_s$, the incident plane-wave tensor $\bold{W}^i(\cdot,\bd)\in {\mathcal C}^\infty(\R^3)^{ 3\times3}$ is defined by
\begin{equation}\label{Wi}
\bold{W}^i (\bxi,\bd) \,:=\, e^{ik_s\bxi\cdot\bd}(\Id-\bd\otimes\bd) \,+\, e^{ik_p\bxi\cdot\bd}\, \bd\otimes\bd, \qquad \bxi \in \R^3,
\end{equation}
where $k_s$ and $k_p$ are given in (\ref{ks_kp}). In view of (\ref{plwa}) and (\ref{freef}), one may note that 
\begin{equation}
 \ubold_\g(\bxi) ~=~  \int_{\Sp^2} \bold{W}^i (\bxi,\bd)  \g (\bd) \,\, \text{d}S_{\bd}, \qquad  \ubold^i(\bxi) ~=~ \bold{W}^i (\bxi,\bd)\,\bq ,   \,\, \forall \bxi \in \R^3, \, \bd \in \Sp^2.
\end{equation}

\noindent {\it Background wavefield.}~The interaction of incident wavefield $\ubold^i$ with the layered background domain, Fig.~\ref{fig_elastic1}~(a),~gives rise to the total field $\ubold_b \in H^1_{loc}(\R^3)^3$ governed by
\begin{equation} \label{pr1bb}
\begin{aligned}
&\Delta^* \ubold_b(\bxi) \,+\, \rho(\bxi) \exs \omega^2 \ubold_b(\bxi) \,=\, \bold{0}, \quad &  \bxi \in \R^3\!\setminus\! \{\overline{\G \cup \G_1}\}, \\*[0.6 mm]
&\dnu \ubold_b^-(\bxi) \,+\, \dnu \ubold_b^ +(\bxi) \,=\, \bold{0}, \quad \ubold_b^-(\bxi) \,=\, \ubold_b^+(\bxi), \, & \bxi \in \G, \\*[0.7 mm]
&\dnu \ubold_b^+(\bxi) \,+\, \dnu \ubold_b^\circ(\bxi) \,=\, \bold{0}, \quad\, \ubold_b^+(\bxi) \,=\, \ubold_b^\circ(\bxi),  \,& \bxi \in \G_1,
\end{aligned}
\vspace{-1 mm}
\end{equation} 
where the applied traction $\dnu\ubold_b^\kappa(\bxi)$ over $\G \cup \G_1$ for $\kappa \in \lbrace  \circ, +, - \rbrace$ is understood in the sense of \eqref{nD}.  
The total field $\ubold_b=\ubold^i + \ubold_b^{sc}$ involves two contributions, namely:~I)~incident field $\ubold^i=\bold{W}^i\,\bq$, and~II)~scattered field $\ubold_b^{sc}$ satisfying the pertinent Kupradze radiation condition, similar to (\ref{kup-cond}).

\vspace{1 mm}
\noindent {\it Background response tensor.}~In light of the \emph{linear} mapping $\bq\mapsto \ubold_b$, one may define the second-order tensor $\bold{W}_b(\cdot,\bd)$ such that $\ubold_b(\bxi)=\bold{W}_b(\bxi,\bd)\bq$ for all $\bxi \in \R^3$. 

\noindent {\it Far-field pattern.}~The scattered waveform $\ubold^{sc} = \ubold^p \oplus \ubold^s$ in (\ref{ups}), affiliated with the damaged configuration in  Fig.~\ref{fig_elastic1}~(b),~possesses the following asymptotic representation at the far-field~\cite{CC06, CK}:
\begin{equation}\label{ffp}
\ubold^{sc}(\bxi) = \alpha_p\frac{e^{ik_pr}}{r}\ubold^{p,\infty}(\hat{\bxi}) + \alpha_s\frac{e^{ik_sr}}{r}\ubold^{s,\infty}(\hat{\bxi}) +O(r^{-2}), \qquad r = |\bxi| \rightarrow \infty,
\end{equation}
where $\ubold^{p,\infty}\!\!\parallel\! \hat\bxi$ and $\ubold^{s,\infty}\!\!\perp\! \hat\bxi$ are the so-called P- and S-wave patterns respectively as $r \rightarrow \infty$,
\begin{equation}\label{as}
\alpha_p=\frac{1}{4\pi(\lambda_0+2\mu_0)},~\text{and}~\alpha_s=\frac{1}{4\pi\mu_0}.
\end{equation}
In this setting, the elastic far-field pattern $\ubold^{\infty}$ is defined by
\begin{equation}\label{def-ff}
\ubold^{\infty}:=\ubold^{p,\infty}\oplus\ubold^{s,\infty}.
\end{equation}
\noindent {\it Kupradze matrix in $\Omega_{ext}$.}~The fundamental displacement tensor $\GG(\cdot,\bx)$ associated to the exterior domain $\Omega_{ext}(\mu_0, \rho_0, \lambda_0)$ satisfies 
\begin{equation}\label{gre}
\Delta_{0}^*\GG(\bxi,\bx) + \rho_0 \exs \omega^2 \GG(\bxi,\bx) = \delta(\bxi-\bx) \, \Id, \qquad  \bxi,\bx \in \R^3, \bxi \neq \bx,
\end{equation}
where $\delta(\bxi-\bx)$ is the Dirac distribution centered at $\bx$. $\GG(\bxi,\bx)$ is also know as the Kupradze matrix -- describing the induced wave motion at $\bxi \in \R^3$ due to a unit time-harmonic point force applied at $\bx \in \R^3$, which may be recast as~\cite{kupradze}:
\begin{equation}\label{Gf}
\GG(\bxi,\bx)= \alpha_s \phi_{k_s}(\bxi,\bx) \exs \Id + \alpha_p \nabla_{\! \bxi}  \nabla_{\! \bxi} \exs [ \phi_{k_s} \! -  \phi_{k_p} ](\bxi,\bx), \quad \phi_{k}(\bxi,\bx)=\frac{e^{ik|\bxi-\bx|}}{|\bxi-\bx|}.
\end{equation}
As $|\bxi| \!\rightarrow\! \infty$, the asymptotic behavior of (\ref{Gf}) takes a similar form as (\ref{ffp}) so that the far-field pattern of the Kupradze matrix can be described as 
\begin{equation} \label{ggf}
\GG^{\infty}(\hat\bxi,\bx) ~=~  \GG^{p,\infty}(\hat\bxi,\bx) \oplus \GG^{s,\infty}(\hat\bxi,\bx), \qquad  \forall \hat\bxi \in \Sp^2, \bx \in \R^3,   
\vspace{-2 mm}
\end{equation}
where
\begin{equation} \nonumber
\GG^{p,\infty}(\hat\bxi,\bx) ~=~e^{-i k_p \hat\bxi \cdot\bx} \,\exs \hat\bxi \otimes \hat\bxi, \qquad
 \GG^{s,\infty}(\hat\bxi,\bx)~=~e^{-i k_s \hat\bxi\cdot\bx}(\Id-\hat\bxi \otimes \hat\bxi).
\end{equation}
\begin{remark}\label{recip0}
In light of \eqref{Wi} and \eqref{ggf}, one may observe the following relation
\begin{equation}\nonumber
\GG^\infty(\hat\bxi, \bx)~=~\bold{W}^i(\bx,-\hat\bxi), \qquad  \forall \hat\bxi \in \Sp^2, \bx \in \R^3.
\end{equation}
\end{remark}
\subsection{The mixed reciprocity theorem}

This section aims to establish a suitable reciprocity principle affiliated with the background domain i.e.~the intact heterogeneous composite shown in Fig.~\ref{fig_elastic1}~(a). To this end, let $\Grb(\bxi,\bx) \in H_{loc}^1(\R^3\setminus\nxs\{\bx\})^{3\times3}$ define the displacement Green's tensor satisfying
\vspace{-0.5 mm}
\begin{equation} \label{ch1:Gr}
\begin{aligned}
&\Delta^{\! *}_{\bxi} \exs \Grb (\bxi,\bx) \,+\, \rho(\bxi) \exs \omega^2 \Grb (\bxi,\bx) \,=\,  \delta(\bxi-\bx) \, \Id, \quad & \bxi \in \R^3\!\setminus\! \{\overline{\G \cup \G_1}\}, \bxi \neq \bx  \in \R^3, & \\*[0.6 mm]
&\dnu \Grb^- (\bxi,\bx) \,+\, \dnu \Grb^ +(\bxi,\bx) \,=\, \bold{0}, \,\, \Grb^-(\bxi,\bx) \,=\, \Grb^+(\bxi,\bx), \, & \bxi \in \G, \bxi \neq \bx  \in \R^3, & \\*[0.7 mm]
&\dnu \Grb^+ (\bxi,\bx) \,+\, \dnu \Grb^\circ(\bxi,\bx) \,=\, \bold{0}, \,\, \Grb^+(\bxi,\bx) \,=\, \Grb^\circ(\bxi,\bx),  \,& \bxi \in \G_1, \bxi \neq \bx  \in \R^3. &
\end{aligned}
\end{equation}
One should bear in mind that for every polarization vector $\bq\in\R^3$, the Green's function $\Grb(\bxi,\bx)\bq$ also satisfies the elastic radiation condition, similar to (\ref{kup-cond}). In this setting, the following mixed reciprocity condition applies. 
\begin{theorem}\label{theo-mrp}
The asymptotic expansion of \eqref{ch1:Gr} as $|\bxi| \rightarrow 0$ yields the Green's far-field pattern $\Grb^{\infty}(\cdot,\bx)$ that holds the following relation with the background response tensor $\bold{W}_b$.   
\begin{equation}\label{MRP}
\Grb^{\infty}(\hat\bxi,\bx)~=~\bold{W}_b(\bx,-\hat\bxi), \qquad  \forall \hat\bxi \in \Sp^2, \bx \in \R^3.
\end{equation}
\end{theorem}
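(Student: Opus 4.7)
My plan is to apply Betti's third (reciprocal) identity in the background medium to the pair of wavefields
$\bold{v}_1(\cdot) := \Grb(\cdot,\bx)\bq$ and $\bold{v}_2(\cdot) := \bold{W}_b(\cdot,-\hat\bxi)\bq'$, where $\bq,\bq'\in\R^3$ are arbitrary polarizations, and then pass to the limits in an appropriately punctured large ball. Both $\bold{v}_1$ and $\bold{v}_2$ satisfy the time-harmonic elastic system with identical transmission conditions across $\G\cup\G_1$, namely \eqref{ch1:Gr} and \eqref{pr1bb}; moreover $\bold{v}_1$ (resp.\ $\bold{v}_2^{sc}:=\bold{v}_2-\bold{W}^i(\cdot,-\hat\bxi)\bq'$) satisfies the Kupradze radiation conditions \eqref{kup-cond}. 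The target identity \eqref{MRP} is the layered-background generalization of Remark~\ref{recip0}, which follows by direct inspection of \eqref{ggf} in the homogeneous case, so the plan is really to reproduce the standard homogeneous argument while accommodating the piecewise-smooth material.

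The first stage is to apply Betti's identity over $D_{R,\epsilon}:=B_R\setminus\big(\overline{B_\epsilon(\bx)}\cup\overline{\G\cup\G_1}\big)$. Because both fields are governed by the same operator $\Delta^*+\rho\omega^2$ with identical continuity of displacement and traction across $\G$ and $\G_1$, the interior interface contributions $[\bold{v}_2\cdot\dnu\bold{v}_1-\bold{v}_1\cdot\dnu\bold{v}_2]_{\G\cup\G_1}$ cancel pointwise and drop out of the identity, despite the jumps in the Lamé coefficients. One is left with
\begin{equation}\nonumber
\int_{S_R}\!\!\big[\bold{v}_2\!\cdot\!\dnu\bold{v}_1-\bold{v}_1\!\cdot\!\dnu\bold{v}_2\big]\dsy \;=\; \int_{\partial B_\epsilon(\bx)}\!\!\big[\bold{v}_2\!\cdot\!\dnu\bold{v}_1-\bold{v}_1\!\cdot\!\dnu\bold{v}_2\big]\dsy.
\end{equation}
Sending $\epsilon\to 0$, one uses that $\Grb(\bxi,\bx)$ has the same leading singular behavior as the Kupradze matrix $\GG(\bxi,\bx)$ near $\bxi=\bx$ (the background coefficients are smooth in a neighborhood of $\bx$ when $\bx\notin\G\cup\G_1$, and a limiting argument using the transmission conditions handles the boundary case). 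The classical Dirac-extraction computation then gives the interior limit $\bq\cdot\bold{v}_2(\bx)=\bq\cdot\bold{W}_b(\bx,-\hat\bxi)\bq'$.

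The second stage is the outer limit $R\to\infty$. Split $\bold{v}_2=\bold{W}^i(\cdot,-\hat\bxi)\bq'+\bold{v}_2^{sc}$ on $S_R$. The purely radiating coupling between $\bold{v}_1$ and $\bold{v}_2^{sc}$ vanishes in the limit by the standard Rellich/Kupradze argument, since both fields satisfy \eqref{kup-cond} componentwise in their P- and S-parts. The surviving integral
\begin{equation}\nonumber
I:=\lim_{R\to\infty}\int_{S_R}\!\!\big[\bold{W}^i(\cdot,-\hat\bxi)\bq'\!\cdot\!\dnu\bold{v}_1-\bold{v}_1\!\cdot\!\dnu\bold{W}^i(\cdot,-\hat\bxi)\bq'\big]\dsy
\end{equation}
is evaluated by inserting the far-field expansion \eqref{ffp} for $\bold{v}_1=\Grb(\cdot,\bx)\bq$ with pattern $\Grb^\infty(\hat\bxi,\bx)\bq$, the explicit splitting of $\bold{W}^i(\cdot,-\hat\bxi)\bq'$ into its P- and S-components from \eqref{Wi}, and a direct asymptotic computation on $S_R$. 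Cross-terms coupling different wavenumbers $k_p\neq k_s$ are oscillatory on $S_R$ and drop out by Riemann--Lebesgue; the normalizations $\alpha_p,\alpha_s$ in \eqref{as} are calibrated so that the remaining terms collapse to $-\bq\cdot\Grb^\infty(\hat\bxi,\bx)\bq'$. Combining with the interior limit and using arbitrariness of $\bq,\bq'\in\R^3$ yields \eqref{MRP}.

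The principal obstacle is the far-field computation in the last step in the elastic, vector-valued setting: one must carefully decouple the P- and S-wave contributions (which propagate at distinct speeds $k_p,k_s$), justify that the P-S cross-terms vanish in the surface integral as $R\to\infty$, and track the exact prefactors so that no spurious constant appears in \eqref{MRP}. A secondary, more bookkeeping issue is the verification in the first stage that \emph{both} fields satisfy identical transmission conditions --- this is precisely why the identity holds for the full $\Grb$ and the full $\bold{W}_b$, rather than only for their scattered parts, and it is this structural alignment that permits a clean cancellation of all interior interface integrals.
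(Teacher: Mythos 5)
Your proposal takes a genuinely different route from the paper's. The paper proves \eqref{MRP} in two cases, $\bx\in\Omega_{ext}$ and $\bx\in\Omega_+\cup\Omega_-$, by repeatedly applying Betti's theorem over $\Omega_{ext}$ and over $\Omega_+\cup\Omega_-$ to the pairs $(\GG,\Grb-\GG)$, $(\Grb,\bold{W}_b^{sc})$, etc., and then converting the resulting $\G_1$-integrals into far-field patterns by means of the algebraic identity $\GG^\infty(\hat\bxi,\cdot)=\bold{W}^i(\cdot,-\hat\bxi)$ of Remark~\ref{recip0}. This avoids any asymptotic computation on $S_R$ beyond the standard vanishing of the radiating-radiating pairing. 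Your proof instead applies Betti's identity \emph{once}, directly to $\Grb(\cdot,\bx)\bq$ and $\bold{W}_b(\cdot,-\hat\bxi)\bq'$ over the whole punctured ball $B_R\setminus(\overline{B_\epsilon(\bx)}\cup\overline{\G\cup\G_1})$, and observes that the interface integrals cancel because both tensors obey the \emph{same} transmission conditions across $\G$ and $\G_1$ (cf.\ \eqref{ch1:Gr} and \eqref{pr1bb}). This is a cleaner structural idea and does eliminate the case split. The price is that the outer limit becomes a genuine far-field extraction over $S_R$: you must carry out the Jones--Colton--Kress type stationary-phase computation for the two elastic wave speeds and verify that the prefactors $\alpha_p,\alpha_s$ in \eqref{as} produce exactly $\Grb^\infty(\hat\bxi,\bx)$, which the paper deliberately sidesteps.

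Two points deserve more care than your sketch gives them. First, the P--S cross-terms in the surviving integral $I$ do \emph{not} simply ``drop out by Riemann--Lebesgue''; on $S_R$ at fixed $R$ they are killed by the orthogonality of the P- and S-polarizations ($\Grb^{p,\infty}(\hat\ybold,\cdot)\parallel\hat\ybold$ vs.\ $\Grb^{s,\infty}(\hat\ybold,\cdot)\perp\hat\ybold$, and likewise for $\bold{W}^i$) together with the fact that the stationary points $\hat\ybold=\pm\hat\bxi$ of the mixed phase $(k_p-k_s\hat\ybold\cdot\hat\bxi)R$ produce contributions that oscillate in $R$; since the full $S_R$-integral is $R$-independent (it equals the interior Betti contribution $\bq\cdot\bold{W}_b(\bx,-\hat\bxi)\bq'$ for all large $R$), those oscillatory residues must actually vanish identically, not merely on average. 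You should make that logic explicit rather than appealing to Riemann--Lebesgue. Second, as you note, the punctured-ball computation requires $\bx\notin\G\cup\G_1$ so that the leading singularity of $\Grb(\cdot,\bx)$ is the homogeneous Kupradze kernel; the extension to $\bx\in\G\cup\G_1$ needs the same kind of continuity argument that the paper invokes at the end of its proof, and should be stated as such rather than folded into a parenthetical. With these two clarifications, your argument closes, so this is a valid alternative proof rather than a gap.
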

\begin{proof}
The above statement will be analyzed in two steps considering~(1)~$\bx\in \Omega_{ext}$, and (2)~$\bx\in \Omega_{+} \cup \exs \Omega_{-}$. In { \it Step 1}, the source point $\bx$ is assumed to be located in the external domain $\Omega_{ext}$. In this setting, every column of $\Grb(\cdot,\bx)-\GG(\cdot,\bx)$ represents a regular radiating solutions of the Navier equation in $\Omega_{ext}$ satisfying the radiation conditions (\ref{kup-cond}). Thus, Betti's theorem reads $\forall \xbold,\bx \in \Omega_{ext}\backslash \Gamma_1$,
 \begin{equation} \label{ext1}
\begin{aligned}
  \big[\Grb-\GG\big](\bxi,\bx) &\,= \int_{\G_1}\Big\{ \dnu \GG(\bxi,\ybold) (\Grb-\GG)(\ybold,\bx) \,\,-  \\*[0.2mm]
  &\qquad\qquad\,\,\, - \GG(\bxi,\ybold) \dnu(\Grb-\GG)(\ybold,\bx)\Big\}\dsy \,= \\*[0.5mm]
  &= \int_{\G_1}\big\{   \dnu\GG(\bxi,\ybold) \Grb(\ybold,\bx)
  -\GG(\bxi,\ybold) \dnu \Grb(\ybold,\bx)\big\}\dsy, 
  \end{aligned}\vspace{-0.5mm}
 \end{equation} 
where the co-normal derivatives $\dnu(\cdot)$ of the Green's tensors are with respect to the first argument. The second identity in \eqref{ext1} makes use of
$$\int_{\G_1}\big\{ \partial_{\bnu}^*\GG(\xbold,\ybold) \GG(\ybold,\zbold)-\GG(\xbold,\ybold) \partial_{\bnu}^*\GG(\ybold,\zbold)\big\}\dsy=\bold{0}, \quad \forall \xbold,\bx \in \Omega_{ext}\backslash \Gamma_1.$$
\vspace{-1 mm}
In view of Remark~\ref{recip0}, the asymptotic behavior of \eqref{ext1} as $|\bxi| \rightarrow \infty$ may be recast as
\begin{equation} \label{ext*}
\begin{aligned}
\Grb^\infty(\xhat,\zbold)- \bold{W}^i(\zbold, -\xhat) &\,=  \int_{\G_1}\Big\{  \partial_{\bnu}^*\bold{W}^i(\ybold, -\xhat)\exs \Grb(\ybold,\zbold) \,-\\*[0.5 mm]
  &\,\,-\bold{W}^i(\ybold,-\xhat) \exs \partial_{\bnu}^*\Grb(\ybold,\zbold)\Big\}\dsy, \quad \forall \bx \in \Omega_{ext}\backslash \Gamma_1,\,\hat\bxi \in \Sp^2.
 \end{aligned}  \vspace{-2 mm}
 \end{equation}
In what follows, the right hand side of \eqref{ext*} is shown to represent the scattered wave tensor $\bold{W}_b^{sc}(\zbold, -\xhat) \colon\!\!\! = [\bold{W}_b - \bold{W}^i](\zbold, -\xhat)$ of the background domain, which proves the theorem statement for $\zbold \in \Omega_{ext}$. In this vein, the boundary integral representation of $\bold{W}_b^{sc}(\zbold,-\xhat)$ can be written in terms of the reciprocity relation between $\Grb(\ybold,\zbold)$ and $\bold{W}_{b}^{sc}(\ybold,-\xhat)$ over $\Omega_{ext}$, so that $\forall \bx \in \Omega_{ext}\backslash \Gamma_1,\,\hat\bxi \in \Sp^2$,
\begin{equation}
\label{ext2}
\bold{W}_b^{sc}(\zbold,-\xhat)= \int_{\G_1}\Big\{ \dnu\Grb(\ybold,\zbold)\bold{W}_{b}^{sc}(\ybold,-\xhat) \\*[1mm]
-\Grb(\ybold,\zbold)\dnu \bold{W}_{b}^{sc}(\ybold,-\xhat)\Big\}\dsy.
\end{equation}
On the other hand, the continuity of $\bold{W}_b(\cdot,-\hat\bxi)$ and $\Grb(\cdot,\zbold)$ across $\G$ and their reciprocity relation over $\Omega_{+}\cup\Omega_{-}$ result in
\begin{equation} \label{As}
\begin{aligned} 
& \boldsymbol{0} \,=  \int_{\Omega_+\cup \exs \Omega_-} \!\!\! \Big\{ \Delta^{\! *} \Grb(\ybold,\zbold)\bold{W}_b(\ybold,-\xhat) -
\Grb(\ybold,\zbold) \exs \Delta^{\! *}  \bold{W}_b(\ybold,-\xhat)\Big\}\dy \\*[1 mm]
& \quad\! = \int_{\G_1}\Big\{  \dnu\Grb(\ybold,\zbold)\bold{W}_{b}(\ybold,-\xhat) -
\Grb(\ybold,\zbold) \dnu \bold{W}_{b} (\ybold,-\xhat)\Big\}\dsy, \quad \forall \bx \in \Omega_{ext}\backslash \Gamma_1.
\end{aligned}
\end{equation}
On decomposing $\bold{W}_b(\ybold,-\xhat)$ in \eqref{As} into incident and scattered components, one finds:
\begin{equation}
\begin{aligned} \label{WiWs}
 &\int_{\G_1}\Big\{  \dnu\Grb(\ybold,\zbold)\bold{W}_{b}^{sc}(\ybold,-\xhat) -
\Grb(\ybold,\zbold) \dnu \bold{W}_{b}^{sc} (\ybold,-\xhat)\Big\}\dsy ~=~\\*[1 mm]
 &  \int_{\G_1}\Big\{  \dnu \bold{W}^{i} (\ybold,-\xhat) \Grb(\ybold,\zbold)  -
                \bold{W}^{i}(\ybold,-\xhat)  \dnu\Grb(\ybold,\zbold) \Big\}\dsy.
\end{aligned}
\end{equation}
%
On the basis of (\ref{ext2}) and (\ref{WiWs}), one concludes
\begin{equation}
\label{zext}
 \bold{W}_b^{sc}(\zbold,-\xhat)=\int_{\G_1}\Big\{  \dnu \bold{W}^{i} (\ybold,-\xhat) \Grb(\ybold,\zbold)  -
                \bold{W}^{i}(\ybold,-\xhat)  \dnu\Grb(\ybold,\zbold) \Big\}\dsy.
\end{equation}
This, in light of \eqref{ext*}, completes the proof for $\zbold \in \Omega_{ext}$ as the following
\begin{eqnarray}\label{s1}
\Grb^\infty(\xhat,\zbold)= \big[\bold{W}^i+  \bold{W}_b^{sc}\big](\zbold,-\xhat) =  \bold{W}_b(\zbold,-\xhat), \qquad \forall \bx \in \Omega_{ext}\backslash \Gamma_1,\,\hat\bxi \in \Sp^2.
\end{eqnarray}
Now, let us move on to \emph{Step 2} where $\zbold\in \Omega_{+}\nxs\cup\Omega_-$ and $\Grb(\cdot,\zbold)$ is a smooth radiating solution of \eqref{ch1:Gr} in $\Omega_{ext}$. In this setting, the reciprocity relation between $\GG(\cdot,\ybold)$ and $\Grb(\cdot,\zbold) $ over $\Omega_{ext}$ reads that $\forall \zbold \in \lbrace\Omega_{+}\nxs\cup\Omega_-\rbrace \backslash \Gamma_1, \, \xbold\in\Omega_{ext}$,
\begin{equation}
 \label{int1}
  \Grb(\xbold,\zbold) = \int_{\G_1}\Big\{\dnu\GG(\xbold,\ybold)\Grb(\ybold,\zbold) 
  \nonumber\\*[1mm]
-\GG(\xbold,\ybold) \dnu\Grb(\ybold,\zbold)\Big\}\dsy,  
 \end{equation}
whose far-field expansion as $|\bxi| \rightarrow \infty$ takes the form 
 \begin{equation}
 \label{int2}
  \Grb^\infty (\xhat,\zbold) =  \int_{\G_1}\Big\{\dnu\GG^\infty(\xhat,\ybold) \Grb(\ybold,\zbold) -
\GG^\infty(\xhat,\ybold) \dnu\Grb(\ybold,\zbold)\Big\}\dsy, \quad \hat\bxi \in \Sp^2.
 \end{equation}
In addition, one may note that $\bold{W}^{sc}_b(\cdot, -\xhat)$ is also a radiating solution to the Navier equation whose relation with $\Grb(\ybold,\zbold)$ can be described in terms of Betti's formula over $\Omega_{ext}$, so that $\forall \zbold \in \lbrace\Omega_{+}\nxs\cup\Omega_-\rbrace \backslash \Gamma_1, \, \hat\bxi \in \Sp^2$,
 \begin{equation}
 \label{int3}
  \bold{0} ~=\,  \int_{\G_1}\Big\{ \dnu \bold{W}_{b}^{sc}(\ybold,-\xhat) \Grb(\ybold,\zbold) - \bold{W}_{b}^{sc}(\ybold,-\xhat) \dnu \Grb(\ybold,\zbold) \Big\}\dsy.
 \end{equation}
 Invoking Remark~\ref{recip0}, one may superimpose~\eqref{int2} and~\eqref{int3} to obtain $\forall \zbold \in \lbrace\Omega_{+}\nxs\cup\Omega_-\rbrace \backslash \Gamma_1$ and $\hat\bxi \in \Sp^2$,
 \begin{equation}\label{eq+}
 \Grb^\infty(\xhat,\zbold) =  \int_{\G_1}\Big\{\dnu \bold{W}_b(\ybold,-\xhat)  \Grb(\ybold,\zbold) 
 -\bold{W}_b(\ybold,-\xhat) \dnu \Grb(\ybold,\zbold)\Big\}\dsy.
 \end{equation}
 On the other hand, by considering the continuity of $\bold{W}_b(\ybold,-\xhat)$ and $ \Grb(\ybold,\zbold)$ across $\G$, one may integrate by part the following weak form over $\Omega_{+}\nxs\cup\Omega_-$,
 \begin{equation}\nonumber
\bold{W}_b(\zbold,-\xhat) ~=\,\int_{\Omega_+\nxs\cup\Omega_-}\!\!\Big\{  \Delta^*_{\bnu} \bold{W}_b(\ybold,-\xhat) \Grb(\ybold,\zbold)
- \bold{W}_b (\ybold,-\xhat) \Delta^*_{\bnu} \Grb(\ybold,\zbold)\Big\}\dy, 
 \end{equation}
 to obtain $\forall \hat\bxi \in \Sp^2$ and $\zbold \in \lbrace\Omega_{+}\nxs\cup\Omega_-\rbrace \backslash \Gamma_1$
 \begin{equation}\label{eq++}
 \bold{W}_b(\zbold,-\xhat) ~=\, \int_{\G_1}\Big\{\dnu \bold{W}_b(\ybold,-\xhat)  \Grb(\ybold,\zbold) 
 -\bold{W}_b(\ybold,-\xhat) \dnu \Grb(\ybold,\zbold)\Big\}\dsy.
 \end{equation}
 In view of~\eqref{eq+} and \eqref{eq++}, one concludes
 \begin{eqnarray}\label{s2}
\Grb^\infty(\xhat,\zbold)~=~\bold{W}_b(\zbold,-\xhat), \qquad \forall \bx \in\lbrace\Omega_{+}\nxs\cup\Omega_-\rbrace \backslash \Gamma_1,\,\hat\bxi \in \Sp^2.
\end{eqnarray}
The proof of mixed reciprocity relation $\Grb^\infty(\xhat,\zbold)=\bold{W}_b(\zbold, -\xhat)$ is now complete everywhere in $\zbold \in \R^3$ based on~\eqref{s1},~\eqref{s2} and the continuity of $\Grb$ and $\bold{W}_b$ over $\G_1$.
\end{proof}
\subsection{Far-field and scattering operators}\label{sec33}

\noindent {\it Multiple-incident scattering.}~The interaction of $\ubold_\g \in H^1_{loc}(\R^3)^3$ with $\Omega$ in the damaged condition gives birth to the scattered field $\us \in H^1(\R^3\setminus\overline{\G}_0)^3$ satisfying the Kupradze radiation condition and 
\vspace{-1 mm}
\begin{equation} \label{ch1:Gr2}
\begin{aligned}
&\Delta^{\! *} \exs \us(\bxi) \,+\, \rho(\bxi) \exs \omega^2 \us(\bxi) \,=\,   \bold{0}, \quad & \bxi \in \R^3\backslash \{\overline{\G \cup \G_1}\}, \\*[0.5 mm]
&\dnu \us^- (\bxi) \,+\, \dnu \us^+(\bxi) \,=\, \bold{0}, \,\,  \, & \bxi \in \G,  \\*[0.5 mm]
&\dnu \us^- (\bxi) \,=\, \KK(\bxi)[\us](\bxi) - \dnu \ubold_{\g}^- (\bxi)  \,& \bxi \in \G_0,  \\*[0.5 mm]
&\us^- (\bxi) \,=\, \us^+(\bxi),& \bxi \in \G\backslash\overline{\G_0},   \\*[0.5 mm]
&\dnu \us^+(\bxi) \,+\, \dnu \us^{\circ}(\bxi) \,=\, \bold{0}, \,\, \us^+(\bxi) \,=\, \us^{\circ}(\bxi),  \,& \bxi \in \G_1. 
\end{aligned}
\end{equation}
where $\lbrace  \circ, +, - \rbrace$ indicate affiliation with $\lbrace  \Omega_{ext}, \Omega_+, \Omega_- \rbrace$ respectively.

\vspace{1 mm}
\noindent {\it Far-field operator.}~In this setting, let $\Fop:L^2(\Sp^2)^3\rightarrow L^2(\Sp^2)^3$ denote the far-field operator associated with the damaged configuration, Fig.~\ref{fig_elastic1}~(b), defined as the following
$$\Fop(\g)(\hat\bxi)~=~\us^\infty(\hat\bxi), \qquad \forall \g \in L^2(\Sp^2)^3,\,\, \hat\bxi \in \Sp^2,$$ 
where $\us^\infty$ is the far field pattern associated with $\us \in H^1(\R^3\setminus\overline{\G}_0)^3$ in~\eqref{ch1:Gr2}, according to asymptotic representation~\eqref{ffp}--\eqref{def-ff}.
Similarly, one may define the background far-field operator  $\Fop_b:L^2(\Sp^2)^3\rightarrow L^2(\Sp^2)^3$ 
$$\Fop_b(\g)(\hat\bxi)~=~\ubold^\infty_{b,\g}(\hat\bxi),  \qquad \forall \g \in L^2(\Sp^2)^3,\,\, \hat\bxi \in \Sp^2,$$
where $\ubold_{b,\g}^\infty$ is the scattered far field pattern (as $|\bxi| \rightarrow \infty$) associated with
\begin{equation}\label{bgsc}
 \ubold_{b,\g}^{sc}(\bxi) ~=\, \int_{\Sp^2} \big[\boldsymbol{W}_b - \boldsymbol{W}^i\big](\bxi,\bd) \exs \g(\bd) \,\, \text{d}S_{\bd}, \quad  \forall \bxi \in \R^3,
\end{equation}
which is defined over the background domain. Based on this, one may identify the far-field signature of damage by
\begin{equation}\label{Fop}
 \Fop_D(\g)(\hat\bxi):=[\Fop-\Fop_b](\g)(\hat\bxi), \qquad \forall \g \in L^2(\Sp^2)^3,\,\, \hat\bxi \in \Sp^2. 
 \end{equation}

\noindent {\it Scattering operator.}~The scattering operator ${\mathcal S}_b:L^2(\Sp^2)^3\rightarrow L^2(\Sp^2)^3$
is defined by 
\begin{equation}\label{S_b}
{\mathcal S}_b~=~I \exs-\exs 2i \exs ( k_p \alpha_p \oplus k_s \alpha_s) \! \cdot \! \Fop_b,
\end{equation}
where $I$ denotes the identity map, and $(\alpha_p, \alpha_s)$ is given in \eqref{as}.

\begin{remark}\label{SSI}
Using a similar argument to that of Theorem~1.8 in~\cite{kirsch}, one may show that the scattering operator is unitary, that is ${\mathcal S}_b{\mathcal S}_b^*={\mathcal S}_b^*{\mathcal S}_b=I$. 
\end{remark}

\begin{theorem}\label{propSb}
With reference to \eqref{s2} and \eqref{S_b}, the following identity holds
\begin{equation}
\bold{W}_b(\zbold,-\xhat)={\mathcal S}_b( \overline{\bold{W}_b(\zbold,\cdot)})(\xhat), \qquad \forall\zbold\in\Omega, \, \xhat\in\Sp^2.
\end{equation}
\end{theorem}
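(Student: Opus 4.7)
The plan is to reduce the claim, by unpacking the definition \eqref{S_b} of $\mathcal{S}_b$ together with the Herglotz representation \eqref{bgsc} of $\Fop_b$, to a Rellich-type identity for the background Green's tensor $\Grb(\cdot,\zbold)$, and then to prove that identity by applying Betti's second formula on a large ball combined with a stationary-phase extraction on $S_R$. Specifically, evaluating $\mathcal{S}_b$ on the density $\g=\overline{\bold{W}_b(\zbold,\cdot)}$ reduces the claim to
\begin{equation*}
\bold{W}_b(\zbold,-\xhat)-\overline{\bold{W}_b(\zbold,\xhat)} \,=\, -2i(k_p\alpha_p\oplus k_s\alpha_s)\!\int_{\Sp^2}\!\bold{W}_b^{sc,\infty}(\xhat,\bd)\,\overline{\bold{W}_b(\zbold,\bd)}\dsd,
\end{equation*}
and by the mixed reciprocity of Theorem~\ref{theo-mrp} the left-hand side rewrites as $\Grb^\infty(\xhat,\zbold)-\overline{\Grb^\infty(-\xhat,\zbold)}$, making the identity intrinsic to the Green's tensor.

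To prove this identity I would apply Betti's second formula to the pair $\bigl(\Grb(\cdot,\zbold)\bq,\,\bold{W}_b(\cdot,\xhat)\bq'\bigr)$ on $B_R\setminus\overline{B_\varepsilon(\zbold)}$. Because $\rho$ and $\CC$ are real, symmetric, and piecewise smooth, and since both fields satisfy the transmission conditions \eqref{ch1:Gr} and \eqref{pr1bb} across $\G\cup\G_1$, all interfacial jump contributions cancel pairwise and the volume integral vanishes away from the source at $\zbold$. Passing $\varepsilon\to0$, the singularity of $\Grb$ extracts the point value $\bq\cdot\bold{W}_b(\zbold,\xhat)\bq'$. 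Passing $R\to\infty$ and splitting $\bold{W}_b(\cdot,\xhat)=\bold{W}^i(\cdot,\xhat)+\bold{W}_b^{sc}(\cdot,\xhat)$, the outgoing--outgoing pairing between $\Grb$ and $\bold{W}_b^{sc}(\cdot,\xhat)$ vanishes in the limit by the classical orthogonality of Kupradze-radiating solutions, while the outgoing--plane-wave pairing between $\Grb$ and $\bold{W}^i(\cdot,\xhat)$ yields, via Friedlander's stationary-phase expansion of a plane wave at infinity, a contribution concentrated at the antipode $\hat\bxi=-\xhat$ equal to $\bq\cdot\Grb^\infty(-\xhat,\zbold)\bq'$, with the P- and S-polarization components of $\bold{W}^i$ contributing the prefactors $ik_p\alpha_p$ and $ik_s\alpha_s$ respectively.

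An entirely analogous Betti identity, obtained by replacing $\bold{W}_b(\cdot,\xhat)\bq'$ with the total background wave $\ubold_{b,\g}$ driven by the Herglotz density $\g(\bd)=\overline{\bold{W}_b(\zbold,\bd)}\bq'$, produces the integral on the right-hand side of the reduced identity: its incident Herglotz part contributes $\overline{\bold{W}_b(\zbold,\xhat)}$ via the same stationary-phase mechanism together with the elementary relation $\overline{\bold{W}^i(\zbold,\bd)}=\bold{W}^i(\zbold,-\bd)$, while its scattered Herglotz part yields exactly $\Fop_b(\overline{\bold{W}_b(\zbold,\cdot)})(\xhat)$ with the same prefactors $k_p\alpha_p,k_s\alpha_s$. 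Subtracting the two Betti identities closes the proof.

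The principal obstacle is the \emph{bookkeeping of the vectorial P/S decomposition} on $S_R$: the Kupradze asymptotics \eqref{ffp} generate P--P and S--S self-pairings that carry precisely the prefactors appearing in \eqref{S_b}, together with P--S and S--P cross terms whose vanishing must be deduced from the polarization orthogonalities $\ubold^{p,\infty}\!\parallel\!\hat\bxi$, $\ubold^{s,\infty}\!\perp\!\hat\bxi$ and the distinct wave numbers $k_p\ne k_s$. A minor secondary subtlety arises when $\zbold$ lies on or near a bi-material interface, where the Green's singularity is two-sided and the small-ball excision must be replaced by a one-sided limiting argument; continuity of both sides of the claim in $\zbold$ nonetheless allows extension from $\zbold\in\mathrm{int}(\Omega_\pm)$ to the whole of $\Omega$.
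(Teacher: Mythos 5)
Your reduction is correct: unpacking $\mathcal{S}_b$ and $\Fop_b$ reduces the claim to
$\bold{W}_b(\zbold,-\xhat)-\overline{\bold{W}_b(\zbold,\xhat)} = -2i(k_p\alpha_p\oplus k_s\alpha_s)\int_{\Sp^2}\bold{W}_b^{sc,\infty}(\xhat,\bd)\,\overline{\bold{W}_b(\zbold,\bd)}\dsd$,
which is exactly \eqref{id13} in the paper, and rewriting the left-hand side via Theorem~\ref{theo-mrp} as $\Grb^\infty(\xhat,\zbold)-\overline{\Grb^\infty}(-\xhat,\zbold)$ matches \eqref{id10}. The failure is in how you propose to prove this reduced identity: your two Betti identities are both tautological.

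Trace through what they actually give. In your first Betti identity, the excision at $\zbold$ yields $\bq\cdot\bold{W}_b(\zbold,\xhat)\bq'$, the outgoing--outgoing pairing with $\bold{W}_b^{sc}(\cdot,\xhat)$ vanishes, and the stationary-phase pairing with the plane wave $\bold{W}^i(\cdot,\xhat)$ yields a constant times $\bq\cdot\Grb^\infty(-\xhat,\zbold)\bq'$. But by Theorem~\ref{theo-mrp} one already has $\Grb^\infty(-\xhat,\zbold)=\bold{W}_b(\zbold,\xhat)$, so this Betti identity merely re-derives mixed reciprocity (and if your $ik_p\alpha_p,ik_s\alpha_s$ prefactors do not cancel to unity, it is actually inconsistent with it). Your second Betti identity, with $\ubold_{b,\g}$ and $\g=\overline{\bold{W}_b(\zbold,\cdot)}\bq'$, yields on the left $\bq\cdot\ubold_{b,\g}(\zbold)=\int_{\Sp^2}\bq\cdot\bold{W}_b(\zbold,\bd)\g(\bd)\dsd$, and on the right -- after killing the outgoing--outgoing part and applying the same stationary-phase lemma fibrewise in $\bd$ -- a constant times $\int_{\Sp^2}\bq\cdot\Grb^\infty(-\bd,\zbold)\g(\bd)\dsd$, which by mixed reciprocity is again $\int_{\Sp^2}\bq\cdot\bold{W}_b(\zbold,\bd)\g(\bd)\dsd$. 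Subtracting two tautologies does not produce the quadratic far-field form $\int_{\Sp^2}\bold{W}_b^{sc,\infty}(\xhat,\bd)\overline{\bold{W}_b(\zbold,\bd)}\dsd$ that you need.

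The structural ingredient you are missing is the \emph{conjugate} of the Green's tensor. In all of your pairings, the second slot is either a plane wave, a Herglotz superposition of plane waves, or an outgoing scattered field, and in every case the surviving limit on $S_R$ is a \emph{linear} evaluation of $\Grb^\infty$ at a single direction. The bilinear form in the far field only appears when an outgoing wave is paired against an \emph{incoming} one, because that is the unique combination for which the $S_R$ integral neither vanishes nor concentrates at an antipodal stationary point but instead converges to $\int_{\Sp^2}(\cdot)^\infty\overline{(\cdot)^\infty}\,\text{d}S$. Concretely, the paper establishes the Rellich-type identity
\begin{equation*}
(\Grb-\overline{\Grb})(\xbold,\zbold) \,=\, -2i(k_p\alpha_p\oplus k_s\alpha_s)\int_{\Sp^2}\Grb^\infty(\hat\ybold,\xbold)\,\overline{\Grb^\infty}(\hat\ybold,\zbold)\,\text{d}S_{\hat\ybold}
\end{equation*}
(equations \eqref{id6}--\eqref{id9}) by pairing $\Grb(\cdot,\xbold)$ with $\overline{\Grb}(\cdot,\zbold)$, where $\overline{\Grb}$ is anti-radiating; the $S_R$ contribution therefore survives and produces the quadratic form. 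It then takes $|\xbold|\to\infty$ through a second boundary-integral step over $\G_1$ (equations \eqref{int2bar}, \eqref{id10}--\eqref{id13}) to convert $\Grb^\infty(\hat\ybold,\xbold)$ into $\bold{W}_b^{sc,\infty}(\xhat,\bd)$. Nothing in your argument plays the role of $\overline{\Grb}$, and without it there is no mechanism by which the product of two far-field patterns can arise. Your final paragraph on P/S cross-term bookkeeping and one-sided limits near interfaces addresses real but secondary subtleties; the decisive gap is upstream, in the absence of the conjugated Green's function.
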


\begin{proof}
For $\xbold\in\Omega_{ext}$ and $\zbold\in\Omega$, let the open ball $B_R \subset \R^3$ (of radius $R>0$) contain $\{\xbold\}\cup\overline{\Omega}\subset B_R$, so that $(\Grb-\overline{\Grb})(\cdot,\zbold)$ and $(\Grb-\GG)(\cdot,\xbold)$ are two regular radiating solutions solutions of the Navier equation in the background domain according to \eqref{ch1:Gr} and \eqref{Gf}. In this setting, the Betti's reciprocity identity between $\GG(\cdot,\xbold)$ and $(\Grb-\overline{\Grb})(\cdot,\zbold)$ over $\Omega_R:=\Omega_{ext}\cap B_R$ reads: 
\begin{equation}\label{id1}
\begin{aligned}
(  \Grb-\overline{\Grb}  )(\xbold,\zbold) 
 =\int_{  \G_1 \cup S_R} \!\! & \big\{\dnu\GG(\ybold,\xbold)(\Grb-\overline{\Grb})(\ybold,\zbold) \\*[1mm]
&\!\!\!- \GG(\ybold,\xbold)\dnu(\Grb-\overline{\Grb})(\ybold,\zbold) \big\}\dsy, \quad  \forall\xbold\in\Omega_{ext}, \, \zbold\in\Omega.
\end{aligned}
\end{equation}
Similarly, the reciprocity relation between $(\Grb-\GG)(\cdot,\zbold)$ and $(\Grb-\overline{\Grb})(\cdot,\zbold)$ over $\Omega_R$ satisfy
\begin{equation}\label{id2}
\begin{aligned}
{0}~=\,\int_{  \G_1 \cup S_R} \!\! & \big\{\dnu(\Grb-\GG)(\ybold,\xbold)(\Grb-\overline{\Grb})(\ybold,\zbold) \\*[1mm]
&\!\!\!- (\Grb-\GG)(\ybold,\xbold)\dnu(\Grb-\overline{\Grb})(\ybold,\zbold) \big\}\dsy, \quad  \forall\xbold\in\Omega_{ext}, \, \zbold\in\Omega.
\end{aligned}
\end{equation}
Combining (\ref{id1}) and (\ref{id2}) results in 
\begin{equation}\label{id3}
\begin{aligned}
(  \Grb-\overline{\Grb}  )(\xbold,\zbold) 
 =\int_{  \G_1 \cup S_R} \!\! & \big\{\dnu\Grb(\ybold,\xbold)(\Grb-\overline{\Grb})(\ybold,\zbold) \\*[1mm]
&\!\!\!\!\!- \Grb(\ybold,\xbold)\dnu(\Grb-\overline{\Grb})(\ybold,\zbold) \big\}\dsy, \quad  \forall\xbold\in\Omega_{ext}, \, \zbold\in\Omega.
\end{aligned}
\end{equation}
Applying Betti's theorem to $\Grb(\cdot,\xbold)$ and $(\Grb-\overline{\Grb})(\cdot,\zbold)$ over $\Omega$, one may observe that the integral over $\G_1$ in (\ref{id3}) vanishes i.e.,~$\forall\xbold\in\Omega_{ext}, \, \zbold\in\Omega$,
\begin{equation}\label{id4}
\begin{aligned}
0~=\,\int_{  \G_1} \!\! & \big\{\dnu\Grb(\ybold,\xbold)(\Grb-\overline{\Grb})(\ybold,\zbold) 
&\!\!\!- \Grb(\ybold,\xbold)\dnu(\Grb-\overline{\Grb})(\ybold,\zbold) \big\}\dsy. 
\end{aligned}
\end{equation}
On the other hand, the reciprocity relation between $\Grb(\cdot,\xbold)$ and $\Grb(\cdot,\zbold)$ in $B_R$ reads
\begin{equation}\label{id5}
\begin{aligned}
0~=\,\int_{  S_R} \!\! & \big\{\dnu\Grb(\ybold,\xbold)\Grb(\ybold,\zbold) 
&\!\!\!- \Grb(\ybold,\xbold)\dnu\Grb(\ybold,\zbold) \big\}\dsy. 
\end{aligned}
\end{equation}
In view of \eqref{id4} and \eqref{id5}, \eqref{id3} may be recast as, $\forall\xbold\in\Omega_{ext}, \, \zbold\in\Omega$,
\begin{equation}\label{id6}
\begin{aligned}
(  \Grb-\overline{\Grb}  )(\xbold,\zbold) 
 = - \int_{  S_R} \!\! & \big\{\dnu\Grb(\ybold,\xbold)\overline{\Grb}(\ybold,\zbold) 
&\!\!\!- \Grb(\ybold,\xbold)\dnu\overline{\Grb}(\ybold,\zbold) \big\}\dsy. 
\end{aligned}
\end{equation}
When the radius of support $S_R$ in \eqref{id6} is taken to infinity, the following asymptotic representations hold for ${\Grb}(\ybold,\cdot)$, $\forall \hat\ybold \in \Sp^2$,  
\begin{equation}\label{id7}
\begin{aligned}
&\Grb(\ybold,\cdot) \,=\,  \alpha_p\frac{e^{ik_pr}}{r}\Grb^{p,\infty}(\hat{\ybold},\cdot) + \alpha_s\frac{e^{ik_sr}}{r}\Grb^{s,\infty}(\hat{\ybold},\cdot) +O(r^{-2}), \quad r = |\ybold| \rightarrow \infty,\\*[1 mm]
&\dnu\Grb(\ybold,\cdot) \,=\,  i k_p \frac{e^{ik_pr}}{4 \pi r} \Grb^{p,\infty}(\hat{\ybold},\cdot) + i k_s \frac{e^{ik_sr}}{4 \pi r}\Grb^{s,\infty}(\hat{\ybold},\cdot) +O(r^{-2}), \quad r = |\ybold| \rightarrow \infty.
\end{aligned}
\end{equation}
Substituting \eqref{id7} into \eqref{id6}, the first integral takes the following form
\begin{equation}\label{id8}
\begin{aligned}
 \int_{  S_R} \!\! \dnu\Grb&(\ybold,\xbold)\overline{\Grb}(\ybold,\zbold) \dsy ~=\,  i (k_p \alpha_p \oplus k_s \alpha_s) \times \\*[0.5 mm]
& \int_{\Sp^2} \big[    \Grb^{p,\infty}(\hat{\ybold},\xbold)  \overline{\Grb^{p,\infty}}(\hat{\ybold},\zbold) \oplus \Grb^{s,\infty}(\hat{\ybold},\xbold)  \overline{\Grb^{s,\infty}}(\hat{\ybold},\zbold) \big]  \,\, \text{d}S_{\hat\ybold} \,=\\*[0.75 mm]
&\quad\!\!\! =~  i (k_p \alpha_p \oplus k_s \alpha_s)  \int_{\Sp^2} \Grb^{\infty}(\hat{\ybold},\xbold)  \overline{\Grb^{\infty}}(\hat{\ybold},\zbold) \,\, \text{d}S_{\hat\ybold},\qquad |\ybold| \rightarrow \infty.
\end{aligned}
\end{equation}
where $\Grb^{\infty} = \Grb^{p,\infty} \oplus \Grb^{s,\infty}$. As a result, \eqref{id6} can be written as
\begin{equation}\label{id9}
\begin{aligned}
(  \Grb-\overline{\Grb}  )(\xbold,\zbold) \,=-2 i \exs (k_p \alpha_p \oplus k_s \alpha_s) \! \int_{\Sp^2} \Grb^{\infty}(\hat{\ybold},\xbold)  \overline{\Grb^{\infty}}(\hat{\ybold},\zbold) \, \text{d}S_{\hat\ybold},\quad |\ybold| \rightarrow \infty.\end{aligned}
\end{equation}
In parallel, as $|\xbold| \rightarrow \infty$, one may find a proper boundary integral representation for $\Grb^\infty(\xhat,\zbold)-\overline{\Grb^\infty}(-\xhat,\zbold)$ in view of \eqref{int2} and the identity $\overline{\GG^\infty}(-\xhat,\ybold)=\GG^\infty(\xhat,\ybold)$, so that
 \begin{equation}
 \label{int2bar}
  \overline{\Grb^\infty} (-\xhat,\zbold) =  \int_{\G_1}\Big\{\dnu\GG^\infty(\xhat,\ybold) \overline{\Grb}(\ybold,\zbold) -
\GG^\infty(\xhat,\ybold) \dnu\overline{\Grb}(\ybold,\zbold)\Big\}\dsy, \quad \hat\bxi \in \Sp^2.
 \end{equation}
Now, in light of the mixed reciprocity principle \eqref{MRP}, $$\Grb^\infty(\xhat,\zbold)-\overline{\Grb^\infty}(-\xhat,\zbold) = \bold{W}_b(\zbold,-\xhat) -  \overline{\bold{W}_b}(\zbold,\xhat),$$ one may write
\begin{equation}\label{id10}
\begin{aligned}
 \bold{W}_b(\zbold,-\xhat) -  \overline{\bold{W}_b}(\zbold,\xhat)
 =&\int_{  \G_1}   \big\{\dnu\GG^\infty(\hat\xbold,\ybold)(\Grb-\overline{\Grb})(\ybold,\zbold) \\*[1mm]
&\!\!\!- \GG^\infty(\hat\xbold,\ybold)\dnu(\Grb-\overline{\Grb})(\ybold,\zbold) \big\}\dsy, \quad \hat\xbold \in \Sp^2, \bx \in \Omega. 
\end{aligned}
\end{equation}
Substituting \eqref{id9} into \eqref{id10}, the first integral can be rewritten as
\begin{equation}\label{id11}
\begin{aligned}
\int_{  \G_1}  & \dnu\GG^\infty(\hat\xbold,\ybold)(\Grb-\overline{\Grb})(\ybold,\zbold) \dsy = -2 i \exs (k_p \alpha_p \oplus k_s \alpha_s) \times   \\*[0.5mm] 
& \int_{\Sp^2} \overline{\Grb^{\infty}}(\bd,\zbold) \int_{  \G_1}  \dnu\GG^\infty(\hat\xbold,\ybold) \Grb^{\infty}(\bd,\ybold)   \dsy \text{d}S_{\bd} , \quad \hat\xbold \in \Sp^2, \bx \in \Omega. 
\end{aligned}
\end{equation}
As a result, \eqref{id10} is recast as the following
\begin{equation}\label{id12}
\begin{aligned}
& \bold{W}_b(\zbold,-\xhat) -  \overline{\bold{W}_b}(\zbold,\xhat)
 =  -2 i \exs (k_p \alpha_p \oplus k_s \alpha_s) \int_{\Sp^2} \overline{\Grb^{\infty}}(\bd,\zbold) \times   \\*[0.5mm] 
&  \int_{  \G_1} \Big\lbrace \dnu\GG^\infty(\hat\xbold,\ybold) \Grb^{\infty}(\bd,\ybold)  - \GG^\infty(\hat\xbold,\ybold)  \dnu\Grb^{\infty}(\bd,\ybold) \Big\rbrace  \dsy \text{d}S_{\bd} , \quad \hat\xbold \in \Sp^2, \bx \in \Omega, 
\end{aligned}
\end{equation}
which may be further simplified according to \eqref{int2} and Lemma 4.1 in~\cite{fatemeh} as
\begin{equation}\label{id13}
\begin{aligned}
 \bold{W}_b(\zbold,-\xhat) -  \overline{\bold{W}_b}(\zbold,\xhat)
 &=  -2 i \exs (k_p \alpha_p \oplus k_s \alpha_s) \int_{\Sp^2} \overline{\bold{W}_b}(\zbold,-\bd) \bold{W}_b^\infty(\bd,-\hat\bxi) \text{d}S_{\bd}\\*[0.2 mm]
&  =  -2 i \exs (k_p \alpha_p \oplus k_s \alpha_s) \int_{\Sp^2} \bold{W}_b^\infty(\hat\bxi,-\bd) \overline{\bold{W}_b}(\zbold,-\bd)  \text{d}S_{\bd}\\*[0.2 mm]
&  =  -2 i \exs (k_p \alpha_p \oplus k_s \alpha_s) \int_{\Sp^2} \bold{W}_b^\infty(\hat\bxi,\bd) \overline{\bold{W}_b}(\zbold,\bd)  \text{d}S_{\bd}\\*[0.2 mm]
&  =  -2 i \exs (k_p \alpha_p \oplus k_s \alpha_s) \Fop_b(\overline{\bold{W}_b}(\zbold,\cdot))(\hat\bxi), \qquad \hat\xbold \in \Sp^2, \bx \in \Omega. 
\end{aligned}
\end{equation}
\end{proof}

\subsection{The factorization method}

Within the framework of factorization method (FM), this section aims to characterize the support of interlayer delaminations $\Gz$, shown in Fig.~\ref{fig_elastic1}~(b), in terms of the range of operator $\Fop_D$, defined in \eqref{Fop}. To this end, the fundamental elements of FM for imaging in elastic composites, by way of ultrasonic waves, are established in Lemmas~\ref{lemmaH}~--~\ref{ch2lemmaG}. Such results pave the way for the main Theorem~\ref{theo-FM} furnishing a solid platform for non-iterative reconstruction of elastic fractures propagating along bimaterial interfaces. In what follows, for a generic Hilbert space $H$ and a bounded linear operator $F :H\rightarrow H$, we define the real and imaginary parts of operator $F$ by
\begin{equation}
\Re(F):=\frac{F + F^*}{2}\quad\text{and}\quad \Im(F):=\frac{F-F^*}{2i}.
\end{equation}    

\noindent Let us also define the Hergoltz operator $\Hop:L^2(\Sp^2)^3\rightarrow H^{-1/2}(\Gz)^3$ such that 
\begin{equation}
\Hop(\g)\!\nxs~\colon\!\!\!=~\!\!\nu(\bxi) \nxs\cdot\nxs \textrm{\bf C}(\bxi) \colon\!\nxs \nabla  \ubold_{b, \g} (\bxi), \quad  \forall \bxi \in \Gz,
\end{equation}
\vspace{-1 mm}
where 
\vspace{-1 mm}
$$ \ubold_{b,\g}(\bxi) =\! \int_{\Sp^2} \boldsymbol{W}_b(\bxi,\bd) \exs \g(\bd) \,\, \text{d}S_{\bd}, \quad  \forall \bxi \in \R^3. $$

\begin{assumption}\label{ass1}
\noindent Here, it is assumed that $\Gz$ and $\omega$ are such that $\Hop$ is injective -- i.e.~there are no non-trivial background fields $ \ubold_{b,\g}$ satisfying $\dnu  \ubold_{b,\g}|_{\Gz}=\bold{0}$. For more clarity, the latter premise implies $\ubold_{b,\g}=\bold{0}$ whenever $\Hop\g=\bold{0}$. This, by well-posedness of the background scattering problem~\eqref{pr1bb}, reads $\ubold_\g=\bold{0}$, and therefore, $\g=\bold{0}$, which establishes the injectivity of Herglotz operator $\Hop$.  
\end{assumption} 

\begin{remark}
 It is worth mentioning that according to~\cite[Lemma 5.3]{fatemeh}, Assumption~\ref{ass1} holds $\forall \omega>0$ except for a discrete set of values without finite accumulation points. 
\end{remark}

\begin{lemma}\label{lemmaH}
Under Assumption~\ref{ass1}, the Herglotz operator $\Hop$ has a dense range and its adjoint i.e.~conjugate transpose operator $\Hop^*:\widetilde{H}^{1/2}(\Gz)^3\rightarrow L^2(\Sp^2)^3$ satisfies:
\begin{equation}\label{H*}
{\mathcal S}_b\Hop^*\boldsymbol{\eta} = \int_{\Gz}\dnu \Gr^\infty_b(\cdot,\ybold)\boldsymbol{\eta}(\ybold)\dsy, \quad  \forall\boldsymbol{\eta}\in\widetilde{H}^{1/2}(\Gz)^3.
\end{equation}
\end{lemma}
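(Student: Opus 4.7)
The plan is to (i) identify $\Hop^*$ by direct computation from the duality pairing, then (ii) rewrite the kernel using Theorems~\ref{theo-mrp} and~\ref{propSb} so as to arrive at~(\ref{H*}), and finally (iii) deduce density of $\mathrm{Range}(\Hop)$ from injectivity of $\Hop^*$ via a standard Rellich/layer-potential argument.

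For step (i), I would pair $\Hop\g$ with $\boldsymbol{\eta}\in\widetilde{H}^{1/2}(\Gz)^3$ in the $\langle H^{-1/2},\widetilde{H}^{1/2}\rangle$ duality, substitute the Herglotz representation $\ubold_{b,\g}(\bxi)=\int_{\Sp^2}\bold{W}_b(\bxi,\bd)\g(\bd)\dsd$, apply Fubini, and read off
$$\Hop^*\boldsymbol{\eta}(\bd)\,=\,\int_{\Gz}\overline{[\dnu_{\bxi}\bold{W}_b(\bxi,\bd)]}^{\,T}\boldsymbol{\eta}(\bxi)\dsx.$$
For step (ii), I would compose with $\mathcal{S}_b$. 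Applying the co-normal derivative in $\bxi$ to the identity $\bold{W}_b(\bxi,-\hat\bxi)=\mathcal{S}_b(\overline{\bold{W}_b(\bxi,\cdot)})(\hat\bxi)$ of Theorem~\ref{propSb}, and combining it with the mixed reciprocity $\Grb^\infty(\hat\bxi,\bxi)=\bold{W}_b(\bxi,-\hat\bxi)$ of Theorem~\ref{theo-mrp}, yields
$$\dnu_{\bxi}\Grb^\infty(\hat\bxi,\bxi)\,=\,\mathcal{S}_b\!\left(\overline{\dnu_{\bxi}\bold{W}_b(\bxi,\cdot)}\right)\!(\hat\bxi),\qquad \bxi\in\Gz.$$
Interchanging $\mathcal{S}_b$ with the $\Gz$-integral (by linearity and boundedness on $\widetilde{H}^{1/2}(\Gz)^3$) then produces the announced identity~(\ref{H*}).

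For step (iii), I would invoke the standard orthogonality $\overline{\mathrm{Range}(\Hop)}=\ker(\Hop^*)^{\perp}$ and show $\ker(\Hop^*)=\{\boldsymbol{0}\}$. If $\Hop^*\boldsymbol{\eta}=\boldsymbol{0}$, then since $\mathcal{S}_b$ is unitary (Remark~\ref{SSI}) the right-hand side of~(\ref{H*}) vanishes on $\Sp^2$. But that right-hand side is precisely the far-field pattern of the background elastic double-layer potential
$$V(\bx)\,:=\,\int_{\Gz}\dnu_{\ybold}\Grb(\bx,\ybold)\,\boldsymbol{\eta}(\ybold)\dsy,\qquad \bx\in\R^3\setminus\overline{\Gz},$$
which solves the Navier system in $\R^3\setminus\overline{\Gz}$ (the transmission conditions on $\G\cup\G_1\setminus\overline{\Gz}$ being encoded in $\Grb$) and satisfies the Kupradze radiation condition. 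Rellich's lemma forces $V\equiv\boldsymbol{0}$ outside a ball enclosing $\Omega$, unique continuation extends this to $\R^3\setminus\overline{\Gz}$, and the classical elastic double-layer jump relation $[V]_{\Gz}=\boldsymbol{\eta}$ concludes $\boldsymbol{\eta}=\boldsymbol{0}$.

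The main obstacle I expect is the careful bookkeeping of transposes, complex conjugates, and the orientation of $\bnu$ on $\Gz$ when identifying $\Hop^*$, and in particular ensuring that the co-normal derivative carried through reciprocity acts on the correct argument of $\bold{W}_b$. A secondary subtlety is the jump step with $\boldsymbol{\eta}\in\widetilde{H}^{1/2}(\Gz)^3$ rather than $H^{1/2}(\G)^3$: this is controlled by the support property $\supp(\boldsymbol{\eta})\subset\overline{\Gz}$, so that the zero extension of $\boldsymbol{\eta}$ to $\G\cup\G_1$ lies in $H^{1/2}$ and the standard layer-potential jump identities on the closed piecewise-smooth surface $\G\cup\G_1$ restrict cleanly to $\Gz$.
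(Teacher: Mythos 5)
Your proposal is correct and follows essentially the same route as the paper: identify $\Hop^*$ via the duality pairing and Fubini, pass to $\Grb^\infty$ via the mixed reciprocity of Theorem~\ref{theo-mrp}, apply Theorem~\ref{propSb} to swap to the ${\mathcal S}_b$-form, and then prove injectivity of $\Hop^*$ by interpreting its value as the far-field of a background double-layer potential and invoking Rellich plus a jump relation. One point needs tightening in step~(iii): because $\CC$ and $\rho$ jump across $\G$ and $\G_1$, you cannot invoke unique continuation directly to propagate $V\equiv\bold{0}$ from $\Omega_{ext}$ across the interfaces; you first have to use the transmission conditions (continuity of $V$ and $\dnu V$ on $\G_1$, then on $\G\setminus\overline{\Gz}$) together with Holmgren's theorem to get vanishing Cauchy data in an interior neighborhood, and only then apply unique continuation inside each homogeneous piece $\Omega_+$ and $\Omega_-$ in turn -- this is the sequential interface-by-interface argument the paper carries out explicitly.
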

\begin{proof} Let's define the fracture opening displacement profile $\boldsymbol{\eta}\in\widetilde{H}^{1/2}(\Gz)^3$, then in light of Theorem~\ref{theo-mrp}, one has
\begin{eqnarray}
\langle \Hop\g,\,\boldsymbol{\eta} \rangle_{\Gz} &=& \int_{\G_0}\dnu \ubold_{b,\g}(\ybold)\cdot \overline{\boldsymbol{\eta}}(\ybold) \dsy \nonumber\\*[1mm]
&=& \int_{\G_0}\overline{\boldsymbol{\eta}}(\ybold) \nxs\cdot\nxs \int_{\Sp^2}\dnu  \bold{W}_{b}(\ybold,\bd)\g(\bd)\, \text{d}S_{\bd} \! \dsy \nonumber\\*[1mm]
&=& \int_{\Sp^2}\g(\bd)\cdot\int_{\G_0}\dnu \Grb^\infty(-\bd,\ybold)\overline{\boldsymbol{\eta}}(\ybold) \! \dsy \text{d}S_{\bd}  \nonumber\\*[1mm]
&=& (\g,\Hop^*\boldsymbol{\eta})_{L^2(\Sp^2)^3},
\end{eqnarray}
where $\langle \cdot , \cdot \rangle_{\Gz}$ is understood in the sense of duality product $\langle{H}^{-1/2}(\Gz)^3 , \widetilde{H}^{1/2}(\Gz)^3 \rangle_{\Gz}$ as an extension to the $L^2$ inner product. As a result, 
\begin{equation}\label{eqH*bis}
(\Hop^*\boldsymbol{\eta})(\bd) = \int_{\G_0}\overline{\dnu \Grb^\infty(-\bd,\ybold)}\boldsymbol{\eta}(\ybold) \dsy, \qquad \g \in L^2(\Sp^2)^3.
\end{equation}
Lemma's statement (\ref{H*}) will then immediately follow from Theorem~\ref{propSb}. Next, the denseness of the range of $\Hop$ is derived from the injectivity of its adjoint operator $\Hop^*$ as the following. From (\ref{eqH*bis}), 
$$(\Hop^*\boldsymbol{\eta})(\bd)=\overline{\bv^\infty(-\bd)}, \qquad \forall\bd \in \Sp^2$$
 where $\bv^\infty$ is the far field pattern of the double-layer potential
 $$\bv= \int_{\G_0}\dnu\Grb(\cdot,\ybold)\overline{\boldsymbol{\eta}(\ybold)}\,ds(\ybold), \qquad \bv\in H^1(\R^3\setminus\overline{\Gz})^3,$$
satisfying
\vspace{0 mm}
\begin{equation} \label{ch3:Gr2}
\begin{aligned}
&\Delta^{\! *} \exs \bv(\bxi) \,+\, \rho(\bxi) \exs \omega^2 \bv(\bxi) \,=\,   \bold{0}, \quad & \bxi \in \R^3\backslash \{\overline{\G \cup \G_1}\}, \\*[0.5 mm]
&\dnu \bv^- (\bxi) \,+\, \dnu \bv^+(\bxi) \,=\, \bold{0}, \,\,  \, & \bxi \in \G,  \\*[0.5 mm]
&[\bv](\bxi) \,=\, [\bv^+ \nxs-\exs \bv^-](\bxi) \,=\,  \overline{\boldsymbol{\eta}}(\bxi), \,& \bxi \in \G_0,  \\*[0.5 mm]
&\bv^- (\bxi) \,=\, \bv^+(\bxi),& \bxi \in \G\backslash\overline{\G_0},   \\*[0.5 mm]
&\dnu \bv^+(\bxi) \,+\, \dnu \bv^{\circ}(\bxi) \,=\, \bold{0}, \,\, \bv^+(\bxi) \,=\, \bv^{\circ}(\bxi),  \,& \bxi \in \G_1, 
\end{aligned}
\end{equation}
complemented by the Kupradze radiation condition at infinity. Thus, if $\Hop^*\boldsymbol{\eta}=\bold{0}$, then $\bv^\infty=\bold{0}$ over the unit sphere of observation angles, and therefore, by Rellich's lemma~\cite[Lemma 2.11]{CK} $\bv(\bxi)=\bold{0}$ in $\bxi \in \Omega_{ext}$. Thanks to the Holmgren's theorem, the continuity of displacement and traction across $\G_1$, in \eqref{ch3:Gr2}, implies that $\bv=\bold{0}$ in an open neighborhood of $\G_1$. Then, the unique continuation principle reads $\bv(\bxi)=\bold{0}$ in $\bxi \in \Omega_+$. On repeating the application of Holmgren's theorem, one finds $\bv=\bold{0}$ in an open neighborhood of $\G\!\setminus\!\overline{\G_0}$, and hence by the unique continuation principle, $\bv=\bold{0}$ in $\Omega_-$. Therefore, $[\bv]=\bold{0}$ and then $\boldeta=\bold{0}$, which finishes the proof.
\end{proof}

\noindent In view of the linear slip interfacial condition over $\Gz$, let us define $T:H^{-1/2}(\Gz)^3\rightarrow\widetilde{H}^{1/2}(\Gz)^3$ by 
\begin{equation}\label{DT}
T(\tbold_{\g})(\bxi)~\colon \!\!\!\nxs=~[\us](\bxi), \quad  \tbold_{\g}(\bxi)~\colon \!\!\!\nxs=~\dnu \ubold_{b,\g}^- (\bxi), \qquad \bxi \in \Gz.
\end{equation}
Based on this, the following factorization is obtained for the far-field scattering operator $\Fop_D:L^2(\Sp^2)^3 \rightarrow L^2(\Sp^2)^3$,
\begin{equation}\label{FFD}
\Fop_D~=~{\mathcal S}_b\Hop^*T\Hop.
\end{equation}

\begin{lemma}\label{lemmaT}
$T$ is a bounded linear operator with decomposition $T=T_0+T_c$ where $T_0,T_c:H^{-1/2}(\Gz)^3\rightarrow\widetilde{H}^{1/2}(\Gz)^3$ are two bounded linear operators such that $T_0$ is coercive and self-adjoint, while $T_c$ is compact. As a result, $\Re(T)=T_0 + \Re(T_c)$, and  $ \Re(T_c)$ is compact.
\end{lemma}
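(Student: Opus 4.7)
The plan is to build the decomposition of $T$ on top of the splitting $A = A_0 + B$ that was already produced in the well-posedness proof, where $A_0$ contains the bulk elastic energy, the lifted mass term $+\omega^2\!\int\overline{\vbold}\cdot\ubold$ and $-\Top^0$ (and is therefore coercive and Hermitian), while $B$ contains the contact term with $\KK$, the compact part $\Top^c$ of the DtN, and the residual mass term (and is therefore compact). First I would cast \eqref{ch1:Gr2} in variational form: multiplying the Navier equation by $\vbold\in H^1(B_R\setminus\overline{\Gz})^3$, integrating by parts layer-by-layer, using continuity across $\G_1$ and $\G\setminus\overline{\Gz}$, imposing the slip condition $\dnu\us^- = \KK[\us]-\tbold_\g$ on $\Gz$, and substituting $\Top$ for $\dnu$ on $S_R$, one arrives at
\begin{equation*}
A(\us,\vbold) \;=\; L_{\tbold_\g}(\vbold) \;:=\; \int_{\Gz}[\overline{\vbold}]\cdot\tbold_\g\dsy,\qquad \forall\,\vbold\in H^1(B_R\setminus\overline{\Gz})^3.
\end{equation*}

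Next, I would define $\us_0$ to be the unique Lax--Milgram solution of $A_0(\us_0,\vbold)=L_{\tbold_\g}(\vbold)$ and set $T_0(\tbold_\g):=[\us_0]|_{\Gz}$. Self-adjointness of $T_0$ is inherited from the Hermitian character of $A_0$, which itself rests on the major symmetry $\CC_{ijk\ell}=\CC_{k\ell ij}$, $\omega\in\R$, and the self-adjointness of $-\Top^0$ established in Lemma~\ref{lemmaDtN}. For coercivity, the identity
\begin{equation*}
\langle T_0\tbold_\g,\tbold_\g\rangle \;=\; A_0(\us_0,\us_0) \;\ge\; C\,\|\us_0\|^{2}_{H^1(B_R\setminus\overline{\Gz})^3}
\end{equation*}
is combined with the boundedness of $A_0$ and a quantitative right-inverse (lifting) of the jump-trace map $\vbold\mapsto[\vbold]:H^1(B_R\setminus\overline{\Gz})^3\to\widetilde{H}^{1/2}(\Gz)^3$ to yield $\|\us_0\|_{H^1}\ge c\,\|\tbold_\g\|_{H^{-1/2}(\Gz)^3}$, and hence $\langle T_0\tbold_\g,\tbold_\g\rangle\ge c'\,\|\tbold_\g\|^{2}_{H^{-1/2}(\Gz)^3}$.

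Subtracting the two variational identities gives $A_0(\us-\us_0,\vbold)=-B(\us,\vbold)$, so that $T_c(\tbold_\g)=[\us-\us_0]=-[A_0^{-1}(B\us)]|_{\Gz}$. The solution map $\tbold_\g\mapsto\us$ is bounded by well-posedness of the forward problem, $A_0^{-1}:(H^1)^*\to H^1$ is bounded by Lax--Milgram, and the jump trace is bounded; compactness of $T_c$ therefore reduces to compactness of $\us\mapsto B\us\in (H^1)^*$, which is supplied termwise by the compact embedding $H^1(B_R\setminus\overline{\Gz})^3\hookrightarrow L^2(B_R)^3$ for the mass term, the compact embedding $\widetilde{H}^{1/2}(\Gz)^3\hookrightarrow L^2(\Gz)^3$ together with $\KK\in L^\infty(\Gz)^{3\times3}$ for the contact term, and the compactness of $\Top^c$ (Lemma~\ref{lemmaDtN}) for the DtN term. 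Finally, $\Re(T)=T_0+\Re(T_c)$ is immediate from the self-adjointness of $T_0$, and $\Re(T_c)=\tfrac12(T_c+T_c^*)$ is compact as a sum of compact operators. The main technical hurdle I foresee is the coercivity step: it requires careful handling of the duality between $\widetilde{H}^{1/2}(\Gz)^3$ and $H^{-1/2}(\Gz)^3$ pivoted on $L^2(\Gz)^3$, and without a bounded lift of the jump-trace one cannot turn the Korn-type control on $\us_0$ back into a lower bound in the $H^{-1/2}$ norm on $\tbold_\g$.
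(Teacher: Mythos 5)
Your proof follows the same overall architecture as the paper's: define $T_0$ via the Lax--Milgram solution $\ubold_0$ of $A_0(\ubold_0,\vbold)=\int_{\Gz}\tbold_\g\cdot[\overline{\vbold}]$, obtain self-adjointness from the Hermitian structure of $A_0$, establish coercivity of $T_0$, and write $T_c=T-T_0$ as $[\ubold_0 \mapsto$ the solution of $A_0(\cdot,\vbold)=-B(\us,\vbold)]$ so that compactness follows from the already-established compactness of $B$. The noteworthy divergence is in the coercivity step. The paper reaches the intermediate estimate $\|\tbold_\g\|\lesssim\|\ubold_0\|_{H^1}$ by recalling that $\tbold_\g=-\dnu\ubold_0$ on $\Gz$ and $\Delta^*\ubold_0=\omega^2\ubold_0$ in the bulk, then invoking the trace estimate
$\|\dnu\ubold_0\|\lesssim\|\Delta^*\ubold_0\|_{L^2}+\|\CC\!:\!\nabla\ubold_0\|_{L^2}\lesssim\|\ubold_0\|_{H^1}$,
i.e.~an $H(\mathrm{div})$-type normal-trace regularity result for the stress tensor. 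You instead dualize the variational identity directly: pairing $A_0(\ubold_0,\vbold_{\boldsymbol{\eta}})=\langle\tbold_\g,\boldsymbol{\eta}\rangle$ over a bounded right inverse $\boldsymbol{\eta}\mapsto\vbold_{\boldsymbol{\eta}}$ of the jump-trace map gives $\|\tbold_\g\|_{H^{-1/2}(\Gz)^3}\lesssim\|\ubold_0\|_{H^1}$ without any recourse to the PDE satisfied by $\ubold_0$. Both are legitimate; the forward normal-trace estimate and the inverse jump-trace lifting are essentially dual tools. Your route has the minor advantage of producing the estimate directly in $H^{-1/2}(\Gz)^3$ -- the actual dual of $\widetilde{H}^{1/2}(\Gz)^3$ that appears in the definition of $T$ -- whereas the paper's chain is written in $\widetilde{H}^{-1/2}(\Gz)^3$, which is a different (larger) dual space and arguably needs a small adjustment to match the stated mapping properties of $T_0$. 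Conversely, the paper's use of the trace theorem keeps the argument closer to the underlying PDE, which may be preferred by readers used to $H(\mathrm{div})$ estimates.
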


\begin{proof}
The boundedness of operator $T$ is an immediate consequence of the well-posedness of (multiple-incident) scattering problem~\eqref{ch1:Gr2}. Now, let us define $T_0:H^{-1/2}(\Gz)^3\rightarrow\widetilde{H}^{1/2}(\Gz)^3$ by $T_0(\tbold_{\g})=[\ubold_0]$, where $\ubold_0\in H^1(B_R\setminus\overline{\Gz})^3$ satisfies
\begin{equation}
A_0(\ubold_0,\vbold) = \int_{\Gz} \tbold_{\g}(\ybold) \cdot[\overline{\vbold}](\ybold)\dsy,\qquad\forall\vbold\in H^1(B_R\setminus\overline{\Gz})^3.
\end{equation}
It is worth mentioning that $\ubold_0$ is governed by
\begin{equation}
\begin{aligned}
&\Delta^*\ubold_0(\bxi) - \omega^2 \exs \ubold_0(\bxi)~=~\bold{0}, & \quad \bxi \in B_R\backslash \{\overline{\G \cup \G_1}\},\\
&\dnu \ubold_0(\bxi)~=~\nxs- \tbold_{\g}(\bxi), & \bxi \in \Gz.
\end{aligned}
\end{equation} 
Since $A_0$ is coercive and self-adjoint, $T_0$ is well-defined, bounded and self-adjoint. Moreover, by invoking the trace theorem, one obtains
\begin{equation}
\begin{aligned}
|\nxs\nxs|\tbold_{\g}|\nxs\nxs|^2_{\widetilde{H}^{-1/2}(\Gz)^3}\!=|\nxs\nxs|\dnu \ubold_0|\nxs\nxs|^2_{\widetilde{H}^{-1/2}(\Gz)^3} & \leq C\left[ |\nxs\nxs|\Delta^{\! *}\ubold_0|\nxs\nxs|^2_{L^2(B_R)^3} + |\nxs\nxs|\CC\nxs:\!\nabla\ubold_0|\nxs\nxs|^2_{L^2(B_R)^3} \right]\\*[1mm]
& \leq  \widetilde{C}\left[ |\nxs\nxs|\ubold_0|\nxs\nxs|^2_{L^2(B_R)^3} + |\nxs\nxs|\nabla\ubold_0|\nxs\nxs|^2_{L^2(B_R)^3} \right] \\*[1mm]
&\leq \widetilde{C}_1 |A_0(\ubold_0,\ubold_0)| = \widetilde{C}_1 \bigg\rvert \int_{\Gz}\tbold_{\g}\!\cdot\overline{T_0 (\tbold_{\g})}\dsy\bigg\rvert.
\end{aligned}
\end{equation}
\vspace{-2mm}
Thus, $T_0$ is coercive, i.e.
\begin{equation}\label{CoT}
||\tbold_{\g}||_{\widetilde{H}^{-1/2}(\Gz)^3} \leq \widetilde{C}_1 |\langle T_0 (\tbold_{\g}),\tbold_{\g}\rangle_\Gz |.
\end{equation}
On the other hand, note that if $T_c:=T-T_0$, then by definition $T_c(\tbold_{\g}) = [\ubold_c]$ where $\ubold_c=\ubold-\ubold_0$ satisfies the variational form
\begin{eqnarray}\label{prob-uc}
A_0(\ubold_c,\vbold)=-B(\ubold,\vbold),  \quad \forall\vbold\in H^1(B_R\setminus\overline{\Gz})^3.
\end{eqnarray}
Since $A_0$ is coercive and $B$ is compact, then the mapping $\ubold\mapsto\ubold_c$ is compact. Also, from the trace theorem and well-posedness of (\ref{pr1}) governing $\ubold$, the mapping $\tbold_{\g}\mapsto [\ubold_c]$, i.e. the operator $T_c$, is also compact. The compactness of $\Re(T_c)$ is then an immediate consequence, and therefore the proof is complete.
\end{proof}
\begin{lemma}\label{lemmaK}
Assuming $\KK\in L^\infty(\Gz)^3$ is such that $\Im \langle \KK\boldsymbol{\eta}, \boldsymbol{\eta} \rangle_{\Gz} \leq 0$, $\forall \boldsymbol{\eta} \in \widetilde{H}^{1/2}(\Gz)^3$. Then, the operator $\Im(T)=(T-T^*)/{2i}$ is positive definite, i.e.,
\begin{equation}\label{ImT}
\langle \Im(T)(\tbold_{\g}),\tbold_{\g}\rangle_\Gz > 0, \qquad\forall  \tbold_{\g}\in H^{-1/2}(\Gz)^3.
\end{equation}
\end{lemma}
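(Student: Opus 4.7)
\textbf{Proof plan for Lemma~\ref{lemmaK}.}
My approach mirrors the analogous lemma for homogeneous backgrounds in \cite{fatemeh}, but requires a careful bookkeeping of the transmission conditions on the two interior interfaces $\G\setminus\overline{\Gz}$ and $\G_1$. The first step is to rewrite the quantity we want to sign. Since $\langle T^\ast\tbold_\g,\tbold_\g\rangle_\Gz=\overline{\langle T\tbold_\g,\tbold_\g\rangle_\Gz}$, one has $\langle\Im(T)\tbold_\g,\tbold_\g\rangle_\Gz=\Im\langle T\tbold_\g,\tbold_\g\rangle_\Gz=\Im\langle[\us],\tbold_\g\rangle_\Gz$, where $\us\in H^1(\R^3\setminus\overline{\Gz})^3$ is the scattered field from \eqref{ch1:Gr2} driven by the incident Herglotz density $\g$, and $\tbold_\g=\dnu\ubold_{b,\g}^{-}$ on $\Gz$ as in \eqref{DT}. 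The proof therefore reduces to signing $\Im\langle[\us],\tbold_\g\rangle_\Gz$.

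The second step is to apply Betti's (elastic Green's) identity to $\us$ separately in each of the three regions $\Omega_-$, $\Omega_+$ and $\Omega_{ext}\cap B_R$ for an arbitrarily large ball $B_R\supset\overline{\Omega}$, testing against $\overline{\us}$. Adding the three identities and using $\Delta^{\!*}\us+\rho\omega^2\us=\bold{0}$ produces the volume balance
\begin{equation*}
\sum_\kappa\int_{\Omega_\kappa\cap B_R}\bigl[\CC^\kappa\!:\!\nabla\us\!:\!\nabla\overline{\us}-\rho_\kappa\omega^2|\us|^2\bigr]\dy
\;=\;\sum_\kappa\int_{\partial(\Omega_\kappa\cap B_R)}\dnu\us^\kappa\!\cdot\overline{\us^\kappa}\dsy,
\end{equation*}
in which all volume contributions are real. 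On the welded parts of the interfaces, the transmission conditions
$\us^-\!=\us^+$, $\dnu\us^-+\dnu\us^+\!=\bold{0}$ on $\G\setminus\overline{\Gz}$ and the analogous relations on $\G_1$ force the boundary terms on those surfaces to cancel pairwise. On $\Gz$, combining $\dnu\us^-+\dnu\us^+\!=\bold{0}$ with the linear-slip condition $\dnu\us^-=\KK[\us]-\tbold_\g$ from \eqref{ch1:Gr2} collapses the two one-sided traces into the single contribution $-\int_{\Gz}(\KK[\us]-\tbold_\g)\cdot\overline{[\us]}\dsy$. The only remaining boundary is $S_R$, which contributes $\int_{S_R}\dnu\us\cdot\overline{\us}\dsy=\langle\Top\us,\us\rangle_{S_R}$ via the DtN operator of Lemma~\ref{lemmaDtN}.

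The third step is simply to take the imaginary part of the resulting identity. The volume terms vanish, and after isolating the $\tbold_\g$ contribution one obtains
\begin{equation*}
\Im\langle[\us],\tbold_\g\rangle_\Gz\;=\;-\,\Im\langle\KK[\us],[\us]\rangle_\Gz\;+\;\Im\langle\Top\us,\us\rangle_{S_R}.
\end{equation*}
The first term on the right is non-negative by the hypothesis $\Im\langle\KK\boldsymbol{\eta},\boldsymbol{\eta}\rangle_\Gz\le 0$, and the second is strictly positive whenever $\us|_{S_R}\not\equiv\bold{0}$ by the strict inequality \eqref{lmma2.1} in Lemma~\ref{lemmaDtN}. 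This yields $\Im\langle[\us],\tbold_\g\rangle_\Gz\ge 0$ in general and strict inequality as soon as $\us|_{S_R}\ne\bold{0}$.

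The final step is to rule out the zero case. If $\Im\langle[\us],\tbold_\g\rangle_\Gz=0$ then $\us=\bold{0}$ on $S_R$; Holmgren's theorem together with the unique continuation principle, applied successively across $\G_1$ and $\G\setminus\overline{\Gz}$ exactly as in the final paragraph of the proof of Lemma~\ref{lemmaH}, propagates this into $\us\equiv\bold{0}$ on $\R^3\setminus\overline{\Gz}$. Then $[\us]=\bold{0}$ and the interior trace $\dnu\us^-=\bold{0}$ on $\Gz$ force $\tbold_\g=\KK[\us]-\dnu\us^-=\bold{0}$ via \eqref{ch1:Gr2}$_3$, which establishes the strict inequality \eqref{ImT} for every non-zero $\tbold_\g\in H^{-1/2}(\Gz)^3$. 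The main technical obstacle is the bookkeeping in the second step: one must adopt a fixed normal convention on each interface so that the ``continuous'' contributions from $\G\setminus\overline{\Gz}$ and $\G_1$ cancel cleanly and the slip contribution on $\Gz$ acquires the correct sign; once that is pinned down, the remaining arguments follow standard Fredholm/Rellich lines.
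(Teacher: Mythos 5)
Your proposal is correct and takes essentially the same route as the paper: both derive the Betti/variational energy identity for the difference field tested against its own conjugate, cancel the boundary contributions on the welded parts of $\G$ and $\G_1$ via the transmission conditions, isolate the $\Gz$ and $S_R$ terms, and take imaginary parts so that the sign hypothesis on $\KK$ and the strict inequality \eqref{lmma2.1} from Lemma~\ref{lemmaDtN} can be invoked. In fact, your final step -- using \eqref{lmma2.1} together with Rellich's lemma, Holmgren's theorem, and unique continuation to rule out the degenerate case $\us|_{S_R}=\bold{0}$ and thereby upgrade the non-strict bound to strict positivity for $\tbold_\g\neq\bold{0}$ -- makes explicit a point the paper's own proof leaves implicit, since the paper simply writes ``$>0$'' after citing Lemma~\ref{lemmaDtN} without verifying that the trace on $S_R$ cannot vanish.
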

\begin{proof}
One may write the weak formulation for the scattered signature of $\Gz$ as 
\begin{eqnarray}
A(\tilde{\ubold}_{\text{sc}},\tilde{\ubold}_{\text{sc}}) &=& \int_{\Gz}\tbold_{\g}\cdot \overline{[\us]}\dsy, \qquad \tbold_{\g}\in H^{-1/2}(\Gz)^3,
\end{eqnarray}
where $\tilde{\ubold}_{\text{sc}}=\us\nxs-\nxs\ubold_{b,\g}^{sc}$, with reference to \eqref{ch1:Gr2} and \eqref{bgsc}, and $A(\cdot,\cdot)$ is defined by (\ref{def-A}). Therefore, for a given $\tbold_{\g}\in H^{-1/2}(\Gz)^3$
\begin{equation}
\begin{aligned}
\frac{1}{2i}\big[\langle T(\tbold_{\g}) &,\tbold_{\g}\rangle_\Gz - \langle\tbold_{\g},T(\tbold_{\g})\rangle_\Gz\big]~=~\frac{1}{2i}\left(\int_{\Gz}\overline{\tbold_{\g}}\cdot [\us]\dsy - \int_{\Gz}\tbold_{\g}\cdot \overline{[\us]}\dsy\right)\nonumber\\*[1.5mm]
& =\,-\Im(A(\tilde{\ubold}_{\text{sc}},\tilde{\ubold}_{\text{sc}}))~=\,-\Im\langle \KK[\us],{[\us]} \rangle_{\Gz} \nonumber+ \Im\langle \Top(\tilde{\ubold}_{\text{sc}}), \tilde{\ubold}_{\text{sc}} \rangle_{\Gz} \,>\, 0,
\end{aligned}
\end{equation}
where the last inequality invokes Lemma \ref{lemmaDtN}.
\end{proof}
\noindent For a given trial fracture $L\subset\G$ endowed with an opening displacement profile $\boldsymbol{\eta}\in\widetilde{H}^{1/2}(L)^3$, let us define the signature far-field $\boldsymbol{\phi}_L^\infty\in L^2(\Sp^2)^3$ by
\begin{equation}\label{testf}
\boldsymbol{\phi}_L^\infty(\hat\bxi) \,= \int_L \dnu\Gr_b^\infty(\hat\bxi,\ybold)\boldsymbol{\eta}(\ybold)\!\dsy \,= \int_L \dnu\bold{W}_b(\ybold,-\hat\bxi)\boldsymbol{\eta}(\ybold)\!\dsy, \quad \forall \hat\bxi \in \Sp^2.
\end{equation}
\begin{lemma}\label{ch2lemmaG}
The operator $\Gop:={\mathcal S}_b\Hop^*T$ is such that, $\boldsymbol{\phi}_L^\infty\in Range(\Gop)$ for all $\boldsymbol{\eta}\in \widetilde{H}^{1/2}(L)^3$ not vanishing identically on any subset of $L$ with positive Lebesgue measure, if and only if $L\subset\Gz$.
\end{lemma}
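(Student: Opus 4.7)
The plan is to prove the two implications separately, exploiting the factorization $\Gop={\mathcal S}_b\Hop^*T$ together with the integral representation of ${\mathcal S}_b\Hop^*$ established in Lemma~\ref{lemmaH}. For the ``if'' direction, assuming $L\subset\Gz$, I would extend $\boldsymbol{\eta}$ by zero from $L$ to produce $\tilde{\boldsymbol{\eta}}\in\widetilde{H}^{1/2}(\Gz)^3$, and invoke \eqref{H*} to obtain
\begin{equation*}
{\mathcal S}_b\Hop^*\tilde{\boldsymbol{\eta}}=\int_{\Gz}\dnu\Grb^\infty(\cdot,\ybold)\tilde{\boldsymbol{\eta}}(\ybold)\dsy=\int_L\dnu\Grb^\infty(\cdot,\ybold)\boldsymbol{\eta}(\ybold)\dsy=\boldsymbol{\phi}_L^\infty.
\end{equation*}
It therefore suffices to exhibit some $\tbold\in H^{-1/2}(\Gz)^3$ with $T\tbold=\tilde{\boldsymbol{\eta}}$, since then $\Gop\tbold={\mathcal S}_b\Hop^*\tilde{\boldsymbol{\eta}}=\boldsymbol{\phi}_L^\infty$.

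To produce this $\tbold$, I would follow the template used in the proof of Lemma~\ref{lemmaH} and introduce the background double-layer potential
\begin{equation*}
\us(\bxi):=\int_{\Gz}\dnu\Grb(\bxi,\ybold)\tilde{\boldsymbol{\eta}}(\ybold)\dsy,\qquad \bxi\in\R^3\setminus\overline{\Gz}.
\end{equation*}
Since the singular part of $\Grb$ coincides with that of the Kupradze matrix $\GG$ (cf.~\eqref{Gf}), the classical elastic double-layer jump formulas transfer to $\us$, giving $[\us]=\tilde{\boldsymbol{\eta}}$ on $\Gz$, continuity of $\us$ across $\G\setminus\overline{\Gz}$ and $\G_1$, traction continuity $\dnu\us^-+\dnu\us^+=\boldsymbol{0}$ on all of $\G\cup\G_1$, and the Kupradze radiation condition; in other words, $\us$ solves \eqref{ch3:Gr2} with jump datum $\tilde{\boldsymbol{\eta}}$. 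Setting $\tbold:=\KK\tilde{\boldsymbol{\eta}}-\dnu\us^-\in H^{-1/2}(\Gz)^3$ then verifies the linear-slip condition $\dnu\us^-=\KK[\us]-\tbold$ on $\Gz$, so $\us$ is the (unique) solution of \eqref{ch1:Gr2} with this $\tbold$ replacing $\dnu\ubold_{b,\g}^-$; by definition $T\tbold=[\us]=\tilde{\boldsymbol{\eta}}$, completing this direction.

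For the converse, suppose $\boldsymbol{\phi}_L^\infty=\Gop\tbold$ for some $\tbold\in H^{-1/2}(\Gz)^3$ and set $\tilde{\boldsymbol{\eta}}:=T\tbold\in\widetilde{H}^{1/2}(\Gz)^3$. Introduce
\begin{equation*}
\bv_1(\bxi):=\int_{\Gz}\dnu\Grb(\bxi,\ybold)\tilde{\boldsymbol{\eta}}(\ybold)\dsy,\qquad \bv_2(\bxi):=\int_L\dnu\Grb(\bxi,\ybold)\boldsymbol{\eta}(\ybold)\dsy,
\end{equation*}
and observe via \eqref{H*} and the hypothesis that $\bv_1$ and $\bv_2$ are radiating solutions of the background Navier equation in $\R^3\setminus(\Gz\cup L)$ sharing the same far-field pattern. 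Rellich's lemma forces $\bv_1=\bv_2$ in $\Omega_{ext}$, after which I would propagate the identity layer-by-layer exactly as in the closing steps of Lemma~\ref{lemmaH}---first crossing $\G_1$ via Holmgren's theorem and unique continuation in $\Omega_+$, then crossing $\G\setminus(\Gz\cup L)$ into $\Omega_-$---to conclude $\bv_1=\bv_2$ throughout $\R^3\setminus(\Gz\cup L)$. On $L\setminus\Gz$ the potential $\bv_1$ is smooth (its jump set lies in $\Gz$), so $[\bv_1]=\boldsymbol{0}$ there, while $[\bv_2]=\boldsymbol{\eta}$ on $L$ by the standard jump relation; matching the traces of $\bv_1=\bv_2$ from either side of $L\setminus\Gz$ therefore forces $\boldsymbol{\eta}=\boldsymbol{0}$ on $L\setminus\Gz$. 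The non-vanishing hypothesis on $\boldsymbol{\eta}$ then implies that $L\setminus\Gz$ has measure zero, i.e.\ $L\subset\Gz$.

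The main obstacle is expected to be the layer-by-layer continuation in the ``only if'' direction, where one must carefully justify that $\bv_1-\bv_2$ passes across the welded portions of $\G_1$ and $\G\setminus(\Gz\cup L)$---both displacement and traction of the two potentials must be shown to agree there before Holmgren's theorem can seed the interior unique continuation in each homogeneous layer $\Omega_\pm$. A secondary technical point is the transfer of the classical jump relations for elastic double-layer potentials from $\GG$ to the composite Green's tensor $\Grb$, which is handled by the decomposition $\Grb=\GG+(\Grb-\GG)$ with the difference regular away from the diagonal.
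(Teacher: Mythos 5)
Your proof is correct and follows essentially the same approach as the paper: zero-extension of the density, the identity ${\mathcal S}_b\Hop^*\widetilde{\boldsymbol{\eta}}=\boldsymbol{\phi}_L^\infty$ from Lemma~\ref{lemmaH}, the double-layer construction $\tbold=\KK\widetilde{\boldsymbol{\eta}}-\dnu\bw$ for the forward direction, and Rellich's lemma plus the jump relations for the converse. The only differences are presentational (you phrase the converse as a direct derivation rather than a contradiction, and you spell out the layer-by-layer Holmgren/unique-continuation step that the paper leaves implicit).
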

\begin{proof}
By definition, $\Gop: H^{-1/2}(\Gz)^3\rightarrow L^2(\Sp^2)^3$ is a map such that $\Gop\tbold_{\g}=\us^\infty-\ubold_{b,\g}^\infty$, where $\us^\infty$ (resp.~$\ubold_{b,\g}^\infty$) is the far-field pattern affiliated with $\us$ in \eqref{ch1:Gr2} (resp.~$\ubold_{b,\g}^{sc}$ in \eqref{bgsc}). Assuming $L\subset\Gz$, the support of any admissible fracture opening displacement $\boldsymbol{\eta}\in\widetilde{H}^{1/2}(L)^3$ defined over $L$ may be extended to $\Gz$ via zero-padding. Thus-obtained displacement profile is denoted by $\widetilde{\boldsymbol{\eta}} \in \widetilde{H}^{1/2}(\Gz)^3$. In this setting, the far-field pattern $\boldsymbol{\phi}_L^\infty \in L^2(\Sp^2)^3$ is associated to the scattered wavefield $\bw \in H^1_{loc}(\R^3\!\setminus\!\overline{L})^3$
\begin{equation}\label{bwxi}
\bw(\bxi) \,= \int_L \dnu\Gr_b(\bxi,\ybold)\boldsymbol{\eta}(\ybold)\!\dsy \,= \int_{\Gz} \dnu\Grb(\bxi,\ybold)\widetilde{\boldsymbol{\eta}}(\ybold)\dsy, \quad \bxi \in \R^3\!\setminus\!\overline{L},
\end{equation}
which satisfies:
\vspace{0 mm}
\begin{equation} \label{ch4:Gr2}
\begin{aligned}
&\Delta^{\! *} \exs \bw(\bxi) \,+\, \rho(\bxi) \exs \omega^2 \bw(\bxi) \,=\,   \bold{0}, \quad & \bxi \in \R^3\backslash \{\overline{\G \cup \G_1}\}, \\*[0.5 mm]
&\dnu \bw^- (\bxi) \,+\, \dnu \bw^+(\bxi) \,=\, \bold{0}, \,\,  \, & \bxi \in \G,  \\*[0.5 mm]
&[\bw](\bxi) \,=\, [\bw^+ \nxs-\exs \bw^-](\bxi) \,=\,  \widetilde{\boldsymbol{\eta}}(\bxi), \,& \bxi \in \G_0,  \\*[0.5 mm]
&\bw^- (\bxi) \,=\, \bw^+(\bxi),& \bxi \in \G\backslash\overline{\G_0},   \\*[0.5 mm]
&\dnu \bw^+(\bxi) \,+\, \dnu \bw^{\circ}(\bxi) \,=\, \bold{0}, \,\, \bw^+(\bxi) \,=\, \bw^{\circ}(\bxi),  \,& \bxi \in \G_1, 
\end{aligned}
\end{equation}
Complemented by the Kupradze radiation condition at infinity. Thus, if we define $\tbold_{\g}:= \KK\widetilde{\boldsymbol{\eta}}-\dnu \bw$, then $\tbold_{\g}\in H^{-1/2}(\Gz)^3$ and $\Gop(\tbold_{\g})=\boldsymbol{\phi}_L^\infty$.\\
Now, let us assume contrary to the Theorem's statement that there exists $\boldsymbol{\eta}\in \widetilde{H}^{1/2}(L)^3$ such that~(a)~$\boldsymbol{\eta}$ does not vanish in any subset of $L$ of positive Lebesgue measure,~(b)~$\boldsymbol{\phi}_L^\infty$ is within $Range(\Gop)$, and~(c)~$L\not\subset\Gz$. Then, by definition of $\Gop$, $\boldsymbol{\phi}_L^\infty$ is the far-field pattern of $\bw\in H^1_{loc}(\R^3\setminus\overline{\Gz})^3$ in \eqref{bwxi} that satisfies the defective problem \eqref{ch4:Gr2} for some $\tbold_{\g}\in H^{-1/2}(\Gz)^3$.Therefore, one may interpret $\boldsymbol{\phi}_L^\infty$ as the far-field pattern of the two potentials, namely:
\begin{equation}
P_L \boldsymbol{\eta}= \int_{L} \dnu\Grb(\cdot,\ybold)\boldsymbol{\eta}(\ybold)\dsy, \qquad \boldsymbol{\eta}\in\widetilde{H}^{1/2}(L)^3,
\end{equation}
and
\begin{equation}
 \bw = \int_{\Gz} \dnu\Grb(\cdot,\ybold)[ \bw ](\ybold)\dsy, \qquad [ \bw ] \in \widetilde{H}^{1/2}(\Gz)^3.
\end{equation}
By Rellich's lemma and the unique continuation principle, it is known that both potentials are identical in $\R^3\setminus(L\cup\Gz)$. However, by assumption, there exists $\xbold\in L$ and an open neighborhood $V_{\delta}$ of $\xbold$ such that on $V_{\delta}\cap L\subset (L\setminus\overline{\Gz})$ where the density $\boldsymbol{\eta}$ does not vanish. Hence, the potential $P_L$ has a discontinuity on $\xbold$ along the normal direction to $L$, whereas $\bw$ is continuous at the same point, and this is a contradiction.
\end{proof}
\noindent The following corollary follows immediately from Lemma \ref{ch2lemmaG}, and the properties of scattering operator ${\mathcal S}_b$ defined by \eqref{S_b} (see Remark~\ref{SSI}).
\begin{corollary}\label{Coll1}
The operator $\Hop^*$ is such that, ${\mathcal S}_b^*\boldsymbol{\phi}_L^\infty\in Range(\Hop^*)$ for all $\boldsymbol{\eta}\in \widetilde{H}^{1/2}(L)^3$ not vanishing identically on any subset of $L$ of positive Lebesgue measure, if and only if $L\subset\Gz$.
\end{corollary}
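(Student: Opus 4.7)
The plan is to derive the corollary directly from the factorization $\Gop = {\mathcal S}_b \Hop^* T$ established in \eqref{FFD}, combined with the unitarity of ${\mathcal S}_b$ from Remark~\ref{SSI} and the invertibility of $T$. Since Lemma~\ref{ch2lemmaG} characterizes $L\subset\Gz$ in terms of $\boldsymbol{\phi}_L^\infty \in Range(\Gop)$, the task reduces to showing that
\[
\boldsymbol{\phi}_L^\infty \in Range(\Gop) \quad\Longleftrightarrow\quad {\mathcal S}_b^*\boldsymbol{\phi}_L^\infty \in Range(\Hop^*),
\]
for every admissible density $\boldsymbol{\eta}\in\widetilde{H}^{1/2}(L)^3$ not vanishing on any positive-measure subset of $L$.

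First, I would verify that $T:H^{-1/2}(\Gz)^3 \to \widetilde{H}^{1/2}(\Gz)^3$ is boundedly invertible. By Lemma~\ref{lemmaT}, $T = T_0 + T_c$ with $T_0$ coercive and self-adjoint and $T_c$ compact, so the Fredholm alternative applies and reduces invertibility to injectivity. Injectivity follows at once from Lemma~\ref{lemmaK}: if $T\tbold_{\g}=\bold{0}$, then in particular $\langle\Im(T)\tbold_{\g},\tbold_{\g}\rangle_\Gz = 0$, which by \eqref{ImT} forces $\tbold_{\g}=\bold{0}$. Hence $T$ is a bijection between $H^{-1/2}(\Gz)^3$ and $\widetilde{H}^{1/2}(\Gz)^3$, and in particular $Range(\Hop^* T) = Range(\Hop^*)$.

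Next, using ${\mathcal S}_b{\mathcal S}_b^* = {\mathcal S}_b^*{\mathcal S}_b = I$ from Remark~\ref{SSI}, the identity $\Gop\, \tbold_{\g} = {\mathcal S}_b\Hop^* T\tbold_{\g}$ can be rewritten, for any $\tbold_{\g}\in H^{-1/2}(\Gz)^3$, as
\[
\Gop\, \tbold_{\g} \,=\, \boldsymbol{\phi}_L^\infty \quad\Longleftrightarrow\quad \Hop^* T\tbold_{\g} \,=\, {\mathcal S}_b^*\boldsymbol{\phi}_L^\infty.
\]
Combining this equivalence with the bijectivity of $T$ established in the previous step yields $\boldsymbol{\phi}_L^\infty \in Range(\Gop)$ if and only if ${\mathcal S}_b^*\boldsymbol{\phi}_L^\infty \in Range(\Hop^*)$. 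Applying Lemma~\ref{ch2lemmaG} then completes the proof, since the range condition on $\Gop$ is equivalent to $L\subset\Gz$ whenever $\boldsymbol{\eta}$ satisfies the non-degeneracy hypothesis.

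The argument contains no substantial analytic obstacle: each ingredient (unitarity of ${\mathcal S}_b$, invertibility of $T$, and the range characterization for $\Gop$) is already in hand from preceding results. The only delicate point, if any, is ensuring that the Fredholm-based invertibility of $T$ is rigorously tied to the hypotheses already imposed on $\KK$ in Lemma~\ref{lemmaK}; once that is made explicit, the corollary follows by a single application of Lemma~\ref{ch2lemmaG}.
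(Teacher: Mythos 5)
Your proof is correct and follows the route the paper gestures at: the paper dispatches this corollary with a one-line remark citing Lemma~\ref{ch2lemmaG} and the unitarity of $\mathcal{S}_b$ from Remark~\ref{SSI}. You correctly identify and fill the one step that remark leaves implicit --- the bijectivity of $T$, which is genuinely needed to pass from $Range(\Hop^{*}T)$ to $Range(\Hop^{*})$ in the direction $L\not\subset\Gz$ --- by combining the coercive-plus-compact decomposition of Lemma~\ref{lemmaT} (Fredholm alternative) with the strict positivity of $\Im(T)$ from Lemma~\ref{lemmaK} for injectivity.
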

\noindent We are now in position to establish the main Theorem of FM, given by Theorem~\ref{theo-FM}, catering for a non-iterative elastodynamic reconstruction of interfacial fractures in heterogeneous composites. 
\begin{theorem}\label{theo-FM}
Under Assumption~\ref{ass1}, the following holds for operator $\widetilde{F}_D:={\mathcal S}_b^*\Fop_D$:\vspace{1 mm}
\begin{itemize}
\item [(1)]  The operator $(\widetilde{F}_D^\#)^{1/2}\! : L^2(\Sp^2)^3 \rightarrow L^2(\Sp^2)^3$ such that $\widetilde{F}_D^\# := |\Re(\widetilde{F}_D)| + \Im(\widetilde{F}_D)$ is positive, and its range coincides with that of  $\Hop^* : \widetilde{H}^{1/2}(\Gz)^3\rightarrow L^2(\Sp^2)^3$.\vspace{1 mm}
\item [(2)] ${\mathcal S}_b^*\boldsymbol{\phi}_L^\infty\in Range((\widetilde{F}_D^\#)^{1/2})$ for all $\boldsymbol{\eta}\in \widetilde{H}^{1/2}(L)^3$ such that $\boldsymbol{\eta}$ does not vanish identically in any subset of $L$ of positive Lebesgue measure, if and only if $L\subset\Gz$.
\end{itemize}
\end{theorem}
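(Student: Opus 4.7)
The plan is to reduce the theorem to Kirsch's abstract range-identity framework for symmetric factorizations, and then to combine the outcome with Corollary~\ref{Coll1}. The preparatory lemmas \ref{lemmaH}--\ref{ch2lemmaG} were stated precisely so that the three standard hypotheses of that framework (dense range of the outer operator; coercive-plus-compact decomposition of the middle operator; sign-definite imaginary part) hold in this layered elastic setting.

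\emph{Symmetrization of the factorization.} Using the unitarity of ${\mathcal S}_b$ from Remark~\ref{SSI}, namely ${\mathcal S}_b^*{\mathcal S}_b = I$, together with the factorization~\eqref{FFD}, I would first rewrite
\begin{equation*}
\widetilde{F}_D \;=\; {\mathcal S}_b^*\Fop_D \;=\; {\mathcal S}_b^*{\mathcal S}_b\,\Hop^* T \Hop \;=\; \Hop^* T \Hop.
\end{equation*}
This is a symmetric factorization of the form $B^* T B$ with $B := \Hop$, which is precisely the setup in which the $F_\#$ method applies cleanly; indeed, this symmetrization is the conceptual reason for passing from $\Fop_D$ to $\widetilde{F}_D$.

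\emph{Verification of the abstract hypotheses.} The three ingredients required by Kirsch's range identity are then checked off in turn: (i) by Assumption~\ref{ass1} and Lemma~\ref{lemmaH}, the Herglotz operator $\Hop$ has dense range, equivalently $\Hop^*$ is injective; (ii) by Lemma~\ref{lemmaT}, the middle operator $T : H^{-1/2}(\Gz)^3 \to \widetilde{H}^{1/2}(\Gz)^3$ admits the decomposition $T = T_0 + T_c$ with $T_0$ coercive and self-adjoint and $T_c$ compact, so that $\Re(T)$ is Fredholm of index zero; (iii) by Lemma~\ref{lemmaK}, $\Im(T)$ is positive definite, and consequently $\Im(\widetilde{F}_D) = \Hop^*\,\Im(T)\,\Hop$ is nonnegative on $L^2(\Sp^2)^3$, with strict positivity on the closure of $\mathrm{Range}(\Hop)$.

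\emph{Applying the range identity and deducing Part (2).} Under these conditions, the abstract theorem of Kirsch on $F_\#$-type factorizations (see~\cite{kirsch,kirschFM}) guarantees that $\widetilde{F}_D^\# := |\Re(\widetilde{F}_D)| + \Im(\widetilde{F}_D)$ is a positive self-adjoint operator, and that
\begin{equation*}
\mathrm{Range}\!\bigl((\widetilde{F}_D^\#)^{1/2}\bigr) \;=\; \mathrm{Range}(\Hop^*),
\end{equation*}
which is exactly statement~(1). Statement~(2) then follows at once from Corollary~\ref{Coll1}: the test signature ${\mathcal S}_b^*\boldsymbol{\phi}_L^\infty$ lies in $\mathrm{Range}(\Hop^*)$ if and only if $L \subset \Gz$, and by~(1) this is equivalent to membership in $\mathrm{Range}((\widetilde{F}_D^\#)^{1/2})$.

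\emph{Main obstacle.} The routine steps above hide one delicate item that merits care: verifying that the sign conventions in Lemmas~\ref{lemmaT} and~\ref{lemmaK} are compatible with those of the abstract theorem, so that the correct indicator is $\widetilde{F}_D^\# = |\Re(\widetilde{F}_D)| + \Im(\widetilde{F}_D)$ (with $+\Im$, not $|\Im|$). Because $\Im(T) > 0$ here, one has $\Im(\widetilde{F}_D) \geq 0$ and this sign choice is indeed correct; however, the duality pairing $\langle\cdot,\cdot\rangle_{\Gz}$ between $H^{-1/2}(\Gz)^3$ and $\widetilde{H}^{1/2}(\Gz)^3$, together with the trace mechanism embedded in the definition of $T_0$, must be checked to ensure that the abstract theorem applies verbatim in this elastic Sobolev-duality setting rather than merely by analogy with the acoustic or electromagnetic cases previously treated in the literature.
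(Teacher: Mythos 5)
Your proposal follows essentially the same route as the paper: both reduce the statement to the abstract $F_\#$ range-identity theorem (Kirsch's Theorem~2.15 and its generalization in~\cite{crack1}) via the symmetric factorization $\widetilde{F}_D = \Hop^* T \Hop$, obtained from the unitarity of $\mathcal{S}_b$, and then invoke Lemmas~\ref{lemmaH}--\ref{lemmaK} to verify the hypotheses and Corollary~\ref{Coll1} to deduce Part~(2).

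One small inaccuracy is worth correcting. In your step~(i) you write that ``$\Hop$ has dense range, equivalently $\Hop^*$ is injective,'' but what the abstract range identity actually requires is that the \emph{inner} operator $\Hop: L^2(\Sp^2)^3 \to H^{-1/2}(\Gz)^3$ be compact and \emph{injective} -- equivalently, that the \emph{outer} operator $\Hop^*: \widetilde{H}^{1/2}(\Gz)^3 \to L^2(\Sp^2)^3$ be compact with \emph{dense range} in $L^2(\Sp^2)^3$, so that $\mathrm{Range}(\Hop^*)$ can be identified with $\mathrm{Range}((\widetilde{F}_D^\#)^{1/2})$. That condition is exactly what Assumption~\ref{ass1} delivers and is what the paper's proof cites; the dense range of $\Hop$ established in Lemma~\ref{lemmaH} is the converse statement and is not the hypothesis you should be checking here. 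Since you do cite Assumption~\ref{ass1}, the needed fact is available, but the stated equivalence points the wrong way. Finally, the worry raised in your closing paragraph about the Sobolev duality pairing is handled in the paper by noting explicitly that $\widetilde{H}^{1/2}(\Gz)^3 \subset L^2(\Gz)^3 \subset H^{-1/2}(\Gz)^3$ is a Gelfand triple of reflexive Hilbert spaces with dense embeddings, pivoting with respect to the $L^2(\Gz)^3$ inner product; this is precisely the structure under which~\cite[Theorem~3.2]{crack1} applies verbatim, and under which $\Im(T)>0$ gives $\Im(\widetilde{F}_D)\ge 0$ so that the $+\Im$ sign in $\widetilde{F}_D^\#$ is the correct one, as you surmised.
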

\begin{proof}
This theorem is a direct consequence of the abstract Theorem 2.15 in~\cite{kirsch}, and its recent generalization in~\cite[Theorem~3.2]{crack1}. For clarity, we synthesize the results using the present notation. With reference to \eqref{embed}, one may note that $\widetilde{H}^{1/2}(\Gz)^3 \subset H^1_{loc}(\R^3\backslash \Gz)^3 \subset {H}^{-1/2}(\Gz)^3$ is a Gelfand triple involving Hilbert (thus reflexive) spaces with dense embeddings. Moreover, $\widetilde{F}_D :L^2(\Sp^2)^3\rightarrow L^2(\Sp^2)^3$, $\Hop^*:\widetilde{H}^{1/2}(\Gz)^3\rightarrow L^2(\Sp^2)^3$, and $T :{H}^{-1/2}(\Gz)^3 \rightarrow \widetilde{H}^{1/2}(\Gz)^3$ are bounded linear operators according to Lemmas~\ref{lemmaH} and~\ref{lemmaT} such that 
$$\widetilde{F}_D~=~\Hop^*T\Hop.$$
Additionally, the integral operator $\Hop^*:\widetilde{H}^{1/2}(\Gz)^3\rightarrow L^2(\Sp^2)^3$ in \eqref{eqH*bis}:~(a)~is compact owing to its continuous kernel, and~(b)~has a dense range in view of the injectivity of its adjoint operator $\Hop: L^2(\Sp^2)^3 \rightarrow {H}^{-1/2}(\Gz)^3$, see Assumption~\ref{ass1}. In Lemmas~\ref{lemmaT} and~\ref{lemmaK}, it is shown that:~(a)~the operator $T :{H}^{-1/2}(\Gz)^3 \rightarrow \widetilde{H}^{1/2}(\Gz)^3$ defined over the fracture interface $\Gz$ can be decomposed into a compact part $T_c$ and a self-adjoint part $T_0$,~(b)~$T_0$ is coercive according to~\eqref{CoT}, and~(c)~the imaginary part $\Im{T}$ is positive on its domain as described in~\eqref{ImT}. Based on the above arguments, all the conditions for~\cite[Theorem~3.2]{crack1} is established which reads Part~(1)~of Theorem~\ref{theo-FM}. The last part of the Theorem's statement is a direct consequence of Part~(1)~and Corollary~\ref{Coll1}.  
\end{proof}
From Picard's criterion, Theorem 2.7 in \cite{CC06}, the following result is an immediate consequence. 
\begin{corollary}\label{coroll-FM}
Let $\{\mu_\ell,\boldsymbol{\psi}_\ell\}_{\ell=1}^\infty$ be the eigensystem  of $\widetilde{F}_D^\#$,
 then:  $L\subset\Gz$ if and only if 
 \begin{eqnarray}
 \sum_{\ell=1}^\infty \frac{|( \widetilde{\boldsymbol{\phi}_L^\infty} ,\boldsymbol{\psi}_\ell)_{L^2(\Sp^2)^3}|^2}{|\mu_\ell|} &<&\infty,
 \end{eqnarray}
where $\widetilde{\boldsymbol{\phi}_L^\infty}:={\mathcal S}_b^*\boldsymbol{\phi}_L^\infty$  and the density $\boldsymbol{\eta}\in \widetilde{H}^{1/2}(L)^3$ in the definition \eqref{testf} of $\boldsymbol{\phi}_L^\infty$  is such that $\boldsymbol{\eta}$ does not vanish identically in any subset of $L$ of positive Lebesgue measure. 
\end{corollary}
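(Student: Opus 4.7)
\textbf{Proof proposal for Corollary~\ref{coroll-FM}.}
The plan is to view this result as a direct translation of Part~(2) of Theorem~\ref{theo-FM} via Picard's criterion, applied to the square-root operator $(\widetilde{F}_D^\#)^{1/2}$. In other words, Theorem~\ref{theo-FM} already characterizes $L\subset\Gz$ as membership of $\widetilde{\boldsymbol{\phi}_L^\infty}={\mathcal S}_b^*\boldsymbol{\phi}_L^\infty$ in $\text{Range}((\widetilde{F}_D^\#)^{1/2})$, and Picard's criterion expresses such range membership in terms of the spectral decomposition of the operator. The series in the corollary is then obtained by computing the eigenvalues of $(\widetilde{F}_D^\#)^{1/2}$ from those of $\widetilde{F}_D^\#$.

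The first step is to verify that $\widetilde{F}_D^\#$ is a compact, self-adjoint, positive operator on $L^2(\Sp^2)^3$, so that its eigensystem $\{\mu_\ell,\boldsymbol{\psi}_\ell\}_{\ell=1}^\infty$ is well-defined with $\mu_\ell>0$ and $\{\boldsymbol{\psi}_\ell\}$ an orthonormal basis of $\overline{\text{Range}(\widetilde{F}_D^\#)}$. Compactness of $\widetilde{F}_D=\Hop^* T\Hop$ follows from the factorization~\eqref{FFD} together with the compactness of $\Hop^*$ (Lemma~\ref{lemmaH}) and the boundedness of $T$ and $\Hop$ (Lemma~\ref{lemmaT}). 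Compactness is preserved under the operations $\Re(\cdot)$, $\Im(\cdot)$, and $|\cdot|$ on self-adjoint compact operators (the absolute value being defined via the spectral theorem), so $\widetilde{F}_D^\#=|\Re(\widetilde{F}_D)|+\Im(\widetilde{F}_D)$ inherits compactness and self-adjointness. Positivity is already proved in Part~(1) of Theorem~\ref{theo-FM}. By the functional calculus, the square-root $(\widetilde{F}_D^\#)^{1/2}$ is likewise compact, self-adjoint, positive, with eigensystem $\{\sqrt{\mu_\ell},\boldsymbol{\psi}_\ell\}$.

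The second step applies Picard's criterion (Theorem~2.7 in~\cite{CC06}) to the compact operator $(\widetilde{F}_D^\#)^{1/2}:L^2(\Sp^2)^3\to L^2(\Sp^2)^3$. For any $f\in L^2(\Sp^2)^3$ orthogonal to $\ker((\widetilde{F}_D^\#)^{1/2})=\ker(\widetilde{F}_D^\#)$,
\begin{equation*}
f\in\text{Range}\bigl((\widetilde{F}_D^\#)^{1/2}\bigr)\quad\Longleftrightarrow\quad \sum_{\ell=1}^\infty\frac{|(f,\boldsymbol{\psi}_\ell)_{L^2(\Sp^2)^3}|^2}{(\sqrt{\mu_\ell})^2}=\sum_{\ell=1}^\infty\frac{|(f,\boldsymbol{\psi}_\ell)_{L^2(\Sp^2)^3}|^2}{|\mu_\ell|}<\infty.
\end{equation*}
Specializing to $f=\widetilde{\boldsymbol{\phi}_L^\infty}$ and invoking Part~(2) of Theorem~\ref{theo-FM}, which equates $L\subset\Gz$ with $\widetilde{\boldsymbol{\phi}_L^\infty}\in\text{Range}((\widetilde{F}_D^\#)^{1/2})$ for admissible densities $\boldsymbol{\eta}$, delivers the asserted equivalence.

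The proof is essentially bookkeeping once the structural results of Lemmas~\ref{lemmaH}--\ref{ch2lemmaG} and Theorem~\ref{theo-FM} are in hand; no new analytic ingredient is needed. The only non-trivial point to verify carefully is the eigenvalue translation from $\widetilde{F}_D^\#$ to $(\widetilde{F}_D^\#)^{1/2}$ (a standard consequence of the Borel functional calculus for compact self-adjoint operators), and a minor bookkeeping subtlety concerning the component of $\widetilde{\boldsymbol{\phi}_L^\infty}$ in $\ker(\widetilde{F}_D^\#)$, which must vanish for the series to be meaningful; this follows because $\widetilde{\boldsymbol{\phi}_L^\infty}$ already lies in $\overline{\text{Range}(\Hop^*)}=\overline{\text{Range}((\widetilde{F}_D^\#)^{1/2})}$ by density (Lemma~\ref{lemmaH} and Part~(1) of Theorem~\ref{theo-FM}).
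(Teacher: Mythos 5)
Your proposal is correct and follows exactly the route the paper intends: the paper's entire proof of Corollary~\ref{coroll-FM} is the one-line remark ``From Picard's criterion, Theorem 2.7 in \cite{CC06}, the following result is an immediate consequence,'' and your argument is a careful fleshing-out of that remark — verifying compactness, self-adjointness and positivity of $\widetilde{F}_D^\#$, passing to the square-root eigensystem $\{\sqrt{\mu_\ell},\boldsymbol{\psi}_\ell\}$, applying Picard's criterion to $(\widetilde{F}_D^\#)^{1/2}$, and invoking Part~(2) of Theorem~\ref{theo-FM}. The only trivial slip is attributing the compactness of $\Hop^*$ to Lemma~\ref{lemmaH}; in the paper this is noted inside the proof of Theorem~\ref{theo-FM} (via the continuity of the kernel in \eqref{eqH*bis}), but the fact itself is used correctly.
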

\noindent {\it FM criteria for imaging interfacial damage in layered composites.}~On the basis of Theorem~\ref{theo-FM} and Corollary~\ref{coroll-FM}, a fast yet robust FM-based criterion for the elastic-wave reconstruction of (heterogeneous) interfacial anomalies (such as $\Gz$) can be designed as 
\begin{equation}\label{IFM}
I^{\mathcal{F}}(L) := \frac{1}{|\nxs\nxs|\g^L|\nxs\nxs|_{L^2(\Sp^2)^3}},
\end{equation}
where $\g^L$ is the solution to the far-field equation
\begin{eqnarray}\label{farfeq}
(\widetilde{F}_D^\#)^{1/2}\g^L = \widetilde{\boldsymbol{\phi}_L^\infty}, \qquad \widetilde{\boldsymbol{\phi}_L^\infty}:={\mathcal S}_b^*\boldsymbol{\phi}_L^\infty,
\end{eqnarray}
for all possible open trial fractures $L$ in the sampling region. One may invoke the well-known Tikhonov regularization (see~\cite{fatemeh} for further details) to construct an approximate solution for \eqref{farfeq}, notwithstanding of whether or not $\widetilde{\boldsymbol{\phi}_L^\infty}$ is within the range of $(\widetilde{F}_D^\#)^{1/2}$. Note that the imaging indicator $I^{\mathcal{F}}$ reaches its highest values whenever $L$ approaches a (stationary or advancing) fracture $\Gz$ in the sampling grid. It must be mentioned that in most engineered composites such as metamaterials and reinforced concrete sections used in critical infrastructure, the loci of bi-material interfaces are by-design known. In such situations, the brute-force sampling over a 3D space can be significantly optimized by reducing the search space to a well-defined 2D grid covering the material interfaces corresponding to the original design (of a composite structure) i.e.~in this framework, $L \subset \G$. It should be mentioned that $(\widetilde{F}_D^\#)^{1/2}$ in~\eqref{farfeq} is directly constructed from differential experimental data $\Fop_D = \Fop - \Fop_b$ defined in Section~\ref{sec33}. More importantly, $(\widetilde{F}_D^\#)^{1/2}$ is computed only once and independent of the location and geometry of the trial fracture $L$, so that in computing $I^{\mathcal{F}}$ (over the entire sampling grid), the right-hand-side of \eqref{farfeq} is the only quantity to be re-evaluated at every sampling point according to Section~\ref{sec4}. This remarkably expedites the data inversion process. 
\begin{remark}
The proposed FM indicator, constructed based on differential measurements, provide {\emph{high-resolution images}} of (stationary or evolutionary) interfacial fractures irrespective of the illumination frequency. The cost functionals associated to such indicators, e.g.~obtained via Tikhonov regularization, are by-design convex ~\cite{fatemeh,pour2017}, so that their minimizer can be obtained {\emph{non-iteratively}} which is a major stride toward real-time imaging.
\end{remark}

\begin{remark}
It is worth noting that the FM characterization of $\Gz$ from far-field data (via the range of $(\widetilde{F}_D^\#)^{1/2}$) is rooted in deep geometrical considerations, so that the fracture indicator functionals~(\ref{IFM}) may exhibit only a minor dependence on the heterogeneous nature of the elastic contact at the interface of a hidden fracture -- given by the distribution of~$\KK$ on~$\Gz$. This behavior can be traced back to Lemma~\ref{ch2lemmaG} and Corollary~\ref{Coll1}, where the opening displacement profile $\boldsymbol{\eta} \in \tilde{H}^{1/2}(L)$ -- intimately related to the interface law -- is deemed arbitrary (within the constraints of admissibility). This quality makes the FM imaging paradigm particularly attractive in situations where the fracture's contact law is unknown beforehand, which opens up possibilities for the sequential geometrical reconstruction and interfacial characterization of partially-closed fractures e.g.~\cite{pour2017}.   
\end{remark}

\begin{remark}\label{remk-generalization}
FM fracture indicator~\eqref{IFM}~naturally lends itself to imaging advanced damage states shown in Fig.~\ref{figrem}, where delamination cracks have branched into the material layers e.g.~$\Omega_-\!$ or $\Omega_+$. In such cases, $\Omega$ may be recast as $\overline{\cup_{\ell=1}^N\Omega_\ell}$ where $\{\Omega_\ell\}_{\ell=1}^N$ is a set of connected domains, with Lipschitz continuous boundaries, whose material properties $\mu_\ell:=\mu\big\rvert_{\Omega_\ell}$, $\lambda_\ell:=\lambda\big\rvert_{\Omega_\ell}$, and $\rho_\ell:=\rho\big\rvert_{\Omega_\ell}$, are continuous. Moreover, adjacent domains $\Omega_\ell$ and $\Omega_j$, such that $\G_{\ell,j}:=\partial \Omega_\ell\cap\partial \Omega_j\neq \emptyset$, satisfy the monotonicity condition \eqref{mon-cond}, which in this context reads:
\begin{equation}
(\lambda_\ell\big\rvert_{\G_{\ell,j}}-\lambda_j\big\rvert_{\G_{\ell,j}})
(\mu_\ell\big\rvert_{\G_{\ell,j}}-\mu_j\big\rvert_{\G_{\ell,j}})\ge 0, \qquad \ell \neq j \in 1,2,\cdot\!\cdot\!\cdot, N.
\end{equation} 
\end{remark}
\noindent In this setting, it is straightforward to show that Theorem~\ref{theo-FM} and Corollary~\ref{coroll-FM} hold using similar arguments. Thus, the FM criteria~\eqref{IFM} remains valid for imaging interfacial damage grown to advanced stages.

 \begin{figure}[tp!]
\begin{center}
\resizebox{0.7\textwidth}{!}{\includegraphics{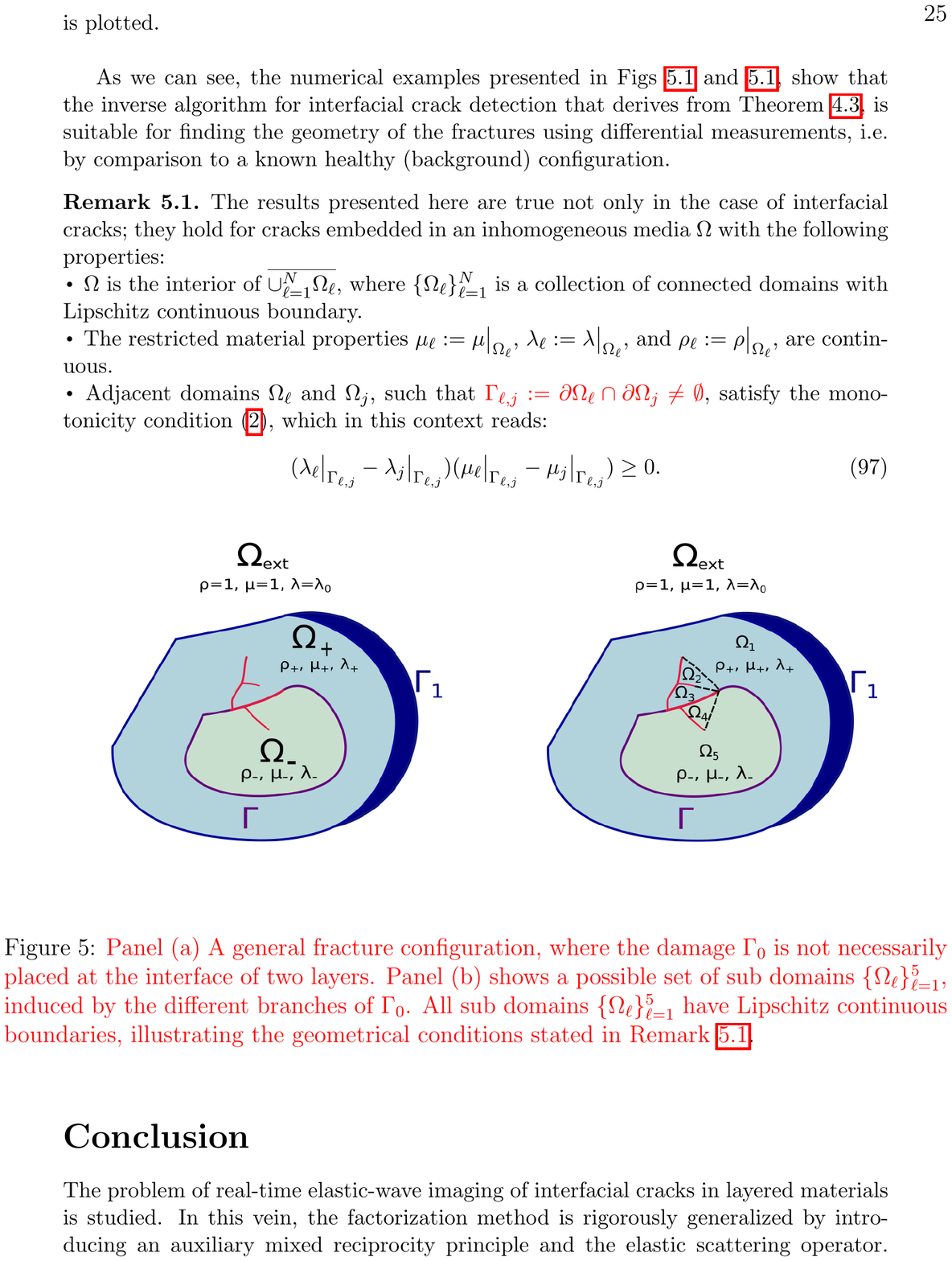}}
\caption{\small{Generalized framework for FM characterization of advanced interfacial damage:~(right)~a generic damage configuration where the interfacial fracture $\Gz$ has propagated and branched into the composite layers $\Omega_-\!$ and $\Omega_+$, and~(left)~mathematical descretization of the composite domain $\{\Omega_\ell\}_{\ell=1}^5$ in order to accommodate different branches of $\Gz$ in FM analysis such that all sub-domains $\{\Omega_\ell\}_{\ell=1}^5$ have Lipschitz continuous boundaries (a geometrical conditions required by FM indicator)}.\label{figrem}
}\vspace*{-5mm}
\end{center}
\end{figure}

\section{\textcolor{black}{Computational validation}}\label{sec4}
This section illustrates the performance of FM indicator functional~\eqref{IFM} for imaging delamination cracks in layered composites through a set of numerical experiments. In what follows the differential sensory data, namely the far-field patterns~\eqref{Fop} over the unit sphere of observation angles $\Sp^2$, are synthetically generated by way of an elastodynamic boundary integral method introduced in~\cite{pour2015,fatemeh-thesis}. 

\noindent {\it Testing configuration.} With reference to Fig~\ref{ch2example}, two distinct composite materials are considered:~\underline{Composite I} (left panel), designed according to the theoretical framework of Fig.~\ref{fig_elastic1}, consists of three homogeneous, isotropic, elastic layers with interfacial boundaries $\G$ and $\G_1$. The exterior layer $\Omega_{ext}$, bounded by an ellipsoidal surface $\G_1$ centered at the origin with semi-axes $(4.5, 4, 6)$, is endowed with non-dimensional shear modulus $\mu_0=1$, Lame constant $\lambda_0=1.5$, and mass density $\rho_0=1$. 
 \begin{figure}[tp!]
\begin{center}
\vspace*{-0.3cm}
\resizebox{0.85\textwidth}{!}{\includegraphics{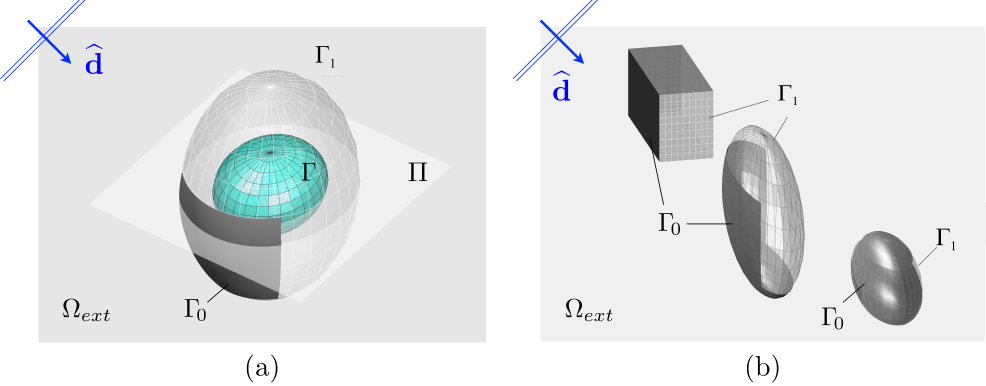}}\vspace*{-0.3cm}
\caption{\small Two elastic-wave sensing setups:~(a)~three-layer composite -- consistent with the theoretical framework of Fig.~\ref{fig_elastic1}, where the bi-material interfaces (denoted by $\G$ and $\G_1$) are ellipsoidal, and the damage zone $\Gz\subset\G_1$ is indicated by dark shade,~and~(b)~two-layer composite with designed material interfaces $\G_1$ and damaged areas $\Gz\subset\G_1$ (in dark shade). Both domains are illuminated by a set of plane P- and S-waves and thus-induced scattered waves are measured in the far field. The sampling grid for FM imaging consists of bi-material boundaries i.e.~$\G \cup \G_1$ in~(a) and $\G_1$ in~(b).}\label{ch2example}\vspace*{-0.5cm}
\end{center}
\end{figure}
The core layer (corresponding to $\Omega_-$ in Fig.~\ref{fig_elastic1}) is an ellipsoid centered at the origin with semi-axes $(3, 2.5, 2)$ and scaled material properties $\mu_-=0.4$, $\lambda_-=0.6$, and $\rho_-=1.5$. The mid-layer (affiliated with $\Omega_{+}$ in Fig.~\ref{fig_elastic1}) is characterized by the relative properties $\mu_+=0.2$, $\lambda_+=0.4$, and $\rho_+=0.75$. In this setting, the shear and compressional wave speeds reads:~(a)~$c_s^0 = 1$ and $c_p^0 = 1.87$ in $\Omega_{ext}$,~(b)~$c_s^+ = 0.52$ and $c_p^+ = 1$ in $\Omega_{+}$, and~(c)~$c_s^- = 0.52$ and $c_p^- = 0.97$ in $\Omega_{-}$. The interfacial damage $\Gz \subset \G_1$ (shaded surface) carries a uniform elastic stiffness
$$\KK(\boldsymbol{x}) \::=\:  (\boldsymbol{\nu} \otimes \bnu)(\boldsymbol{x}) \,+\, (\boldsymbol{\tau}_1 \nxs \otimes \boldsymbol{\tau}_1)(\boldsymbol{x}) \,+\, (\boldsymbol{\tau}_2 \nxs \otimes \boldsymbol{\tau}_2)(\boldsymbol{x}), \qquad \boldsymbol{x} \in \Gz,$$
where $\{\boldsymbol{\tau}_1,\boldsymbol{\tau}_2\}$ indicates an orthonormal set of vectors that span the tangent plane at a point $\boldsymbol{x}$ on $\Gz$, then the vectors $\{\boldsymbol{\tau}_1,\boldsymbol{\tau}_2,\bnu\}$ are a local basis that is well defined everywhere on $\Gz$. On the other hand,~\underline{Composite II} (right panel in Fig.~\ref{ch2example}) is designed to demonstrate the capability of FM indicator in simultaneous reconstruction of multiple interfacial fractures. The background domain consists of two layers:~(i)~exterior domain $\Omega_{ext}$, and~(ii)~three geometrically distinct inclusions with identical material properties -- namely, a cube of side $1.8$ centered at $(0,3,3)$; a sphere of radius $2$ centered at $(0,-4,-2)$; and an ellipsoid with semi-axes $(3, 2, 4)$ centered at the origin. \emph{Layer~(i)}~is a homogeneous, isotropic elastic solid with nondimensionalized shear modulus $\mu_0=1$, Lame constant $\lambda_0=1.5$, and mass density $\rho_0=1$. \emph{Layer~(ii)}~is endowed with uniform (relative) properties $\mu_+=0.2$, $\lambda_+=0.4$, and $\rho_+=0.5$. In this case, the shear and compressional wave speeds are:~(a)~$c_s^0 = 1$ and $c_p^0 = 1.87$ in~\emph{Layer~(i)}, and~(b)~$c_s^+ = 0.63$ and $c_p^+ = 1.26$ in~\emph{Layer~(ii)}. The  bi-material interfaces in the background domain is denoted by $\G_1$, and the support $\Gz \subset \G_1$ of interfacial damage is illustrated by a dark shade. The contact stiffness $\KK$ affiliated with $\Gz$ is given in the fractures local coordinates $\{\boldsymbol{\tau}_1,\boldsymbol{\tau}_2,\bnu\}$ by
\begin{equation}\nonumber
\KK(\boldsymbol{x}) ~:=~ \left\{\begin{array}{cc} \hspace{-7.7 cm}  \boldsymbol{0} & \hspace{-0.7 cm} \boldsymbol{x} \,\,\text{on ellipsoid}  \\ 2(\boldsymbol{\nu} \otimes \bnu)(\boldsymbol{x}) \,+\, 2(\boldsymbol{\tau}_1 \nxs \otimes \boldsymbol{\tau}_1)(\boldsymbol{x}) \,+\, 2(\boldsymbol{\tau}_2 \nxs \otimes \boldsymbol{\tau}_2)(\boldsymbol{x}) & \quad \boldsymbol{x} \,\,\text{on cube or sphere}   \end{array}\right.
\end{equation}

\noindent The designated domains in Fig.~\ref{ch2example} are interrogated by elastic waves in two steps, where:~(1)~the background domain is illuminated by (P- and S-) waves propagating in direction $\widehat\bd$, and~(2)~the probing campaign is repeated after the formation of interfacial damage $\Gz$. In \emph{Step 1}, the interaction of $\G$ and $\G_1$ with incident waves gives rise to the total wavefield $\ubold_b$ solving~\eqref{pr1bb} -- whose associated (scattered) far-field pattern $\ubold^\infty$ is computed on the basis of decomposition~(\ref{ffp}). In \emph{Step 2}, the interaction of $\G$, $\G_1$, and $\Gz$ with the same illuminating waveforms gives birth to the total wavefied \eqref{pr1}, possessing a far-field expansion of the form~(\ref{ffp}). 

\vspace{-1mm}
\begin{remark} 
In absence of direct measurements in \emph{Step 1}, one may synthetically solve~\eqref{pr1bb} to retrieve the background scattered field $\ubold^{sc} = \ubold_b - \ubold^i$, whose far-field pattern is used to construct $\Fop_D$ required for computing the FM imaging indicator~\eqref{IFM}.  
\end{remark}
\vspace{-1mm}

\noindent \emph{The discretized far-field operator.}~For both illumination and sensing purposes, the unit sphere $\Sp^2$ is sampled by a uniform grid of $N_\theta \!\times\! N_\phi$ observation directions, specified by the polar ($\theta_j,\, j\!=\!1,\ldots N_\theta$) and azimuthal~($\phi_k, \,k\!=\!1,\ldots N_\phi$) angle values. In this setting, the discretized far-field operator, associated to~\eqref{farfeq}, is represented as a $3N\!\times 3N$ matrix ($N\!=\!N_\theta N_\phi$) constructed as the following   
\begin{equation}\label{mat0}
\widetilde{\text{\bf F}}_{\!D}^\#= \big|\Re(\widetilde{\text{\bf F}}_{\!D})\nxs\big| + \Im(\widetilde{\text{\bf F}}_{\!D}), \quad  (\widetilde{\text{F}}_{\!D})_{ k \ell} =  (\overline{\text{S}}_b)_{\ell k}( \textrm{{F}} - \textrm{{F}}_b)_{ k \ell}, \qquad \,\, k,\ell = 0,\ldots 3N-1,
\end{equation}
where for $i,j = 0,\ldots, N-1$ and $\text{i} = \sqrt{-1}$, 
\begin{equation}\label{DF}
\begin{aligned}
&(\textrm{\bf{F}}-\textrm{\bf{F}}_b)(3i\nxs+\nxs1\!:\!3i\nxs+\nxs3, \,3j\nxs+\nxs1\!:\!3j\nxs+\nxs3) \,=\,(\textrm{\bf{W}}^\infty\! -\textrm{\bf{W}}_b^\infty) (\hat\bxi_i,\widehat\bd_j), \\*[0.5 mm]
&\textrm{\bf{S}}_b(3i\nxs+\nxs1\!:\!3i\nxs+\nxs3, \,3j\nxs+\nxs1\!:\!3j\nxs+\nxs3) \,=\, \textrm{\bf I}_{\exs 3\times3} - 2 \textrm{i} (k_p \alpha_p \oplus k_s \alpha_s)\textrm{\bf{W}}_b^\infty (\hat\bxi_i,\widehat\bd_j),
\end{aligned}
\end{equation}
wherein $\textrm{\bf I}$ is the $3 \times 3$ identity matrix;~$(W_b^{\infty})_{rs}$, $\lbrace r,s\!=\!1,2,3\rbrace$ stands for the leading-order far-field expansion of the background response tensor, according to~\eqref{ffp} i.e.,
$$ \textrm{\bf{W}}_b^\infty = \textrm{\bf{W}}_b^{p,\infty} \oplus  \textrm{\bf{W}}_b^{s,\infty}.$$
Here, the superscript \emph{p} (resp.~\emph{s}) indicates affiliation with P-wave (resp.~S-wave) patterns. One should bear in mind that $\textrm{\bf{W}}_b^\infty(\hat\bxi_i,\widehat\bd_j)$ is a linear map taking an arbitrary polarization vector $\bq \in \Sp^2$ (related to an incident plane wave propagating along $\widehat\bd_j \in \Sp^2$) to the far-field pattern $\ubold_b^\infty(\hat\bxi_i) \in L^2(\Sp^2)^3$ affiliated with $\ubold_b - \ubold^i$ (see~\eqref{plwa} and~\eqref{pr1bb}) in the background domain such that $\ubold_b^\infty=\textrm{\bf{W}}_b^\infty \bq$. In this vein, $\textrm{\bf{W}}^\infty\!$ in~\eqref{DF} is a similar map taking $\bq$ in~\eqref{plwa} to the far-field pattern $\ubold^\infty \in L^2(\Sp^2)^3$ in~\eqref{ffp} associated to $\ubold - \ubold^i$ (see~\eqref{pr1}) in the damaged state so that $\ubold^\infty=\textrm{\bf{W}}^\infty \bq$. In what follows, unless stated otherwise, we assume $N_\theta=20$ and $N_\phi=10$. 

\noindent \emph{Noisy data.}~When evaluating $\widetilde{\text{\bf F}}_{\!D}$ and $\textrm{\bf{S}}_b$ in~\eqref{mat0} and~\eqref{DF}, the presence of noise in measurements is accounted for by considering the perturbed operators    
\begin{equation}\label{DFN}
\textrm{\bf{F}}^\delta \,\, \colon \!\!\!= \, (\boldsymbol{I}_{\exs 3N\times3N} + \boldsymbol{N}_{\!\epsilon} ) \exs \textrm{\bf{F}}, \qquad \textrm{\!\bf{F}}_b^{\delta'} \,\, \colon \!\!\!= \, (\boldsymbol{I}_{\exs 3N\times3N} + \boldsymbol{N}_{\!\epsilon'} ) \exs \textrm{\bf{F}}_b,
\end{equation}
where $\boldsymbol{I}$ is the $3N \times 3N$ identity matrix, and $\boldsymbol{N}_{\!\epsilon}$ (resp.~$\boldsymbol{N}_{\!\epsilon'}$) is the noise matrix of commensurate dimension whose components are uniformly-distributed (complex) random variables in $[-\epsilon, \, \epsilon]^2$ (resp.~$[-\epsilon', \, \epsilon']^2$). In the following simulations, the primary and secondary noise measures $\delta' = |\nxs\nxs\nxs|{\boldsymbol{N}_{\!\epsilon'} \exs \textrm{\bf{F}}}_b|\nxs\nxs\nxs|/|\nxs\nxs\nxs|\textrm{\bf{F}}_b|\nxs\nxs\nxs|$ and $\delta = |\nxs\nxs\nxs|{\boldsymbol{N}_{\!\epsilon} \exs \textrm{\bf{F}}}|\nxs\nxs\nxs|/|\nxs\nxs\nxs|\textrm{\bf{F}}|\nxs\nxs\nxs|$ take the value of~$\delta\% = \delta'\% = 5$. 

\noindent \emph{Trial far-field pattern.}  The FM indicator map~\eqref{IFM} is constructed by solving~\eqref{farfeq} for the synthetic source density $\g^L \in L^2(\Sp^2)^3$ over a 2D grid of trial infinitesimal fractures $L=\bx_{\small \circ}\!+\bR{\sf L}$, where~$\bx_{\small \circ}\nxs$ denotes the sampling point and $\bR$ is a unitary rotation matrix. Here, this is accomplished by taking~{\sf L} to be a vanishing penny-shaped fracture with unit normal~$\bn_{\small \circ}\nxs:=\lbrace 0,0,1\rbrace$, i.e.~by setting the fracture opening displacement profile in~(\ref{testf}) as $\boldsymbol{\eta}(\ybold) = \textcolor{black}{\delta (\ybold-\bx_{\small \circ}\!)|{\sf L}|^{-1}} \bR\bn_{\small \circ}\nxs$. Note that the 2D sampling grid, proposed in this work, canvases the support of bi-material interfaces in the \emph{background domain} which are known a-priori e.g.~in the context of Fig.~\ref{ch2example}~(a), the sampling grid covers $\G \cup \G_1$. As a result, the orientation of $L$ at every sampling point $\bx_{\small \circ}\nxs$ can be considered known and equal to the direction of local normal vector to the relevant bi-material interface i.e.~$\bR\bn_{\small \circ}\nxs = \boldsymbol{\nu}(\bx_{\small \circ}\nxs)$ -- where in Fig.~\ref{ch2example}~(a), $\bx_{\small \circ}\nxs \in \G \cup \G_1$. In this setting, one  finds that    
\begin{equation}\label{Nrhs}
\phi^\infty_{\bx_{\small \circ}\nxs}(\hat\bxi) := (\bnu \otimes \bnu)(\bx_{\small \circ}\nxs) : \CC: \nabla \bold{W}_b(\bx_{\small \circ}\nxs,-\hat\bxi),
\end{equation}
where the normal vector $\bnu(\bx_{\small \circ}\nxs)$ and elasticity tensor $\CC$ are known from the background configuration, and $\bold{W}_b(\bx_{\small \circ}\nxs,-\hat\bxi)$ is computed by solving~\eqref{pr1bb} for $\ubold_b$ when the incident wavefield $\ubold^i$ in~\eqref{plwa} is propagating in direction $\bd = - \hat\bxi$ and polarized along the reference $-(\hat\bxi,\hat{\boldsymbol{\theta}},\hat{\boldsymbol{\phi}})$ orthonormal basis i.e.~$\bq_p = -\hat\bxi$ and $\bq_s = -\hat{\boldsymbol{\theta}},-\hat{\boldsymbol{\phi}}$. Discretization of~\eqref{Nrhs} in accordance with~\eqref{DFN} leads to a $3N\times1$ vector 
 \begin{equation}\label{DNrhs}
\boldsymbol{\Phi}^\infty_{\bx_{\small \circ}\nxs}(3i+1:3i+3) = (\bnu \otimes \bnu)(\bx_{\small \circ}\nxs) : \CC: \nabla \bold{W}_b(\bx_{\small \circ}\nxs,-\hat\bxi_i), \qquad i = 0,\ldots, N-1.
\end{equation} 
As a result, the far-field equation~\eqref{farfeq} takes the discretized form 
\begin{equation}\label{Dff}
(\widetilde{\text{\bf F}}_{\!D}^\#)^{\frac{1}{2}} \,\g^\delta_{\bx_{\small \circ}\nxs}\,=\, \textrm{\bf{S}}^*_b \boldsymbol{\Phi}^\infty_{\bx_{\small \circ}\nxs}, 
\end{equation}
where star indicates complex conjugate transpose of $\textrm{\bf{S}}_b$.~\eqref{Dff} provides the basis for computing the FM indicator functionals. 

\noindent \emph{Sampling.} As shown in Fig.~\ref{ch2example}, the search area overlays the intrinsic bi-material interfaces of the background domain where the FM indicator functionals are evaluated. More specifically, in panel~(a)~of the figure, the ellipsoidal search surfaces $\G \cup \G_1$ are collectively probed by $2225$  sampling points $\bx_{\small \circ}\nxs$, while in panel~(b),~the 2D ellipsoidal, spherical and cubical grids are respectively covered by $900$, $200$, and $150$ sampling points. As mentioned earlier, the outward normal vector $\bnu(\bx_{\small \circ}\nxs)$ to each sampling surface indicates the orientation of the trial fracture $L(\bx_{\small \circ}\nxs)$. Accordingly, the FM fracture indicator map is constructed by solving~(\ref{Dff}) for a total of $M = 2225$ (resp.~$M=1250$) trial positions $\bx_{\small \circ}\nxs$ in panel~(a) (resp.~panel~(b)) of Fig.~\ref{ch2example}. The resulting distributions are shown in Fig.~\ref{ex1}~(a) and Fig.~\ref{ex2}~(a). 
  \vspace{-0.5mm}
 \begin{remark}
 It is worth mentioning that the far-field operator $(\widetilde{\text{\bf F}}_{\!D}^\#)^{{1}/{2}}$ -- constructed from measured far-field data, is independent of any particular choice of $L(\bx_{\small \circ}\nxs)$, and thus, remains the same for all $M$ variations of $\boldsymbol{\Phi}^\infty_{\bx_{\small \circ}\nxs}$. Therefore, for computational efficiency, one may recast the right-hand side of (\ref{Dff}) as a $3N \!\times\! M$ matrix $\boldsymbol{\Phi}^\infty$ -- encompassing all choices of ${\bx_{\small \circ}\nxs}$, and solve only one equation to construct the entire 3D indicator map.       
 \end{remark}

\noindent \emph{FM damage indicator.}~At this point, the solution to~(\ref{Dff}) is constructed on the basis of Tikhonov regularization~\cite{Kress1999,fatemeh} i.e.~through the non-iterative minimization
\begin{equation}\label{lssm1}
\g^{\text{\sf \!T}}_{\bx_{\small \circ}\nxs} \,\,\colon \!\!\!= \,\, \text{argmin}_{\g^\delta_{\bx_{\small \circ}\nxs}} \Big\{  |\nxs\nxs\nxs|{(\widetilde{\text{\bf F}}_{\!D}^\#)^{\frac{1}{2}} \,\g^\delta_{\bx_{\small \circ}\nxs}- \textrm{\bf{S}}^*_b \boldsymbol{\Phi}^\infty_{\bx_{\small \circ}\nxs}}|\nxs\nxs\nxs|^2_{L^2(\Sp^2)^3} +  \alpha^{\text{\sf T}}_{\bx_{\small \circ}\nxs} \, |\nxs\nxs\nxs|{\g^\delta_{\bx_{\small \circ}\nxs}}|\nxs\nxs\nxs|^2_{L^2(\Sp^2)^3}\Big\},
\end{equation}
where the regularization parameter $\alpha^{\text{\sf T}}_{\bx_{\small \circ}\nxs}$ is obtained by way of Morozov discrepancy principle~\cite{Kress1999}. 
\vspace{-1mm}
\begin{remark}
In light of Corollary~\ref{coroll-FM}, let $\{\mu_\ell,\boldsymbol{\Psi}_\ell\}_{\ell=1}^{3N}$ be the eigensystem of positive and self-adjoint operator $(\widetilde{\text{\bf F}}_{\!D}^\#)^{\frac{1}{2}}$, then one may also build an approximate solution to~\eqref{Dff} such that  
\begin{equation}\label{ginv}
|\nxs\nxs|\g^{\text{\sf P}}_{\bx_{\small \circ}\nxs}|\nxs\nxs|^2_{L^2(\Sp^2)^3}  := \sum_{\ell=0}^{N_{\text{\sf P}}} \frac{| \textrm{\bf{S}}^*_b \boldsymbol{\Phi}^\infty_{\bx_{\small \circ}\nxs}\!\cdot\! \overline{\boldsymbol{\Psi}_\ell}|^2}{\mu_\ell},
\end{equation}
where $N_{\text{\sf P}}$ is a heuristic truncation level (see Theorem 2.11 in \cite{CC06} for more details).
\end{remark}
\noindent On the basis of~\eqref{lssm1} and~\eqref{ginv}, the FM indicator functional reads
\begin{equation}\label{FiFM}
I^{\mathcal{F}}({\bx_{\small \circ}\nxs}) \,\, := \,\, \frac{1}{|\nxs\nxs|\g^{\text{\sf \exs a}}_{\bx_{\small \circ}}|\nxs\nxs|_{L^2(\Sp^2)^3}}, \qquad \text{\sf \!a} = {\text{\sf T}} \,\text{or} \, {\text{\sf P}}.
\end{equation}
On introducing the map
\[
\mathbbm{1}({\bx_{\small \circ}\nxs}) \,\, := \,\, 
\begin{cases}
1 \quad\quad\text{ if } I^{\mathcal{F}}({\bx_{\small \circ}\nxs}) >\tau_{tol} \times \text{max}(I^{\mathcal{F}}),\\
0 \quad\quad\text{ otherwise,}
\end{cases}
\]
the truncated indicator functional may be expressed as
\begin{equation}\label{TiFM}
I^{\mathcal{F}}_{\tau_{tol}}({\bx_{\small \circ}\nxs}) \,\, := \,\, \mathbbm{1}({\bx_{\small \circ}\nxs}) \, I^{\mathcal{F}}({\bx_{\small \circ}\nxs}). 
\end{equation}
The truncation parameter $\tau_{tol}$ is set to $0.1$ in the sequel. 

\noindent \emph{Results.}~Both composites in Fig.~\ref{ch2example} are illuminated by plane waves with shear wavelength $\lambda_s = 2\pi/4$ that is comparable with the representative length scales of the intrinsic bi-material interfaces $\G \cup \G_1$ and that of $\Gz$. Fig.~\ref{ex1}~(a) illustrates the full-aperture FM reconstruction of $\Gz$ in \emph{Composite I} over the ellipsoidal sampling surface~$\G \cup \G_1$. For clarity, the indicator maps are thresholded by $10\%$, i.e.~only the sampling points whose~$I^{\mathcal{F}}({\bx_{\small \circ}\nxs})$ values are higher than ten percent of the global maximum value -- the support of $I^{\mathcal{F}}_{0.1}$, are shown in Fig.~\ref{ex1}~(b). Also, Fig.~\ref{ex2} shows the corresponding plots for FM reconstruction of $\Gz$ in \emph{Composite II} over the (ellipsoidal, spherical and cubical) sampling grid~$\G_1$.

 \begin{figure}
\begin{center}
\vspace*{-0.2cm}
\resizebox{1\textwidth}{!}{\includegraphics{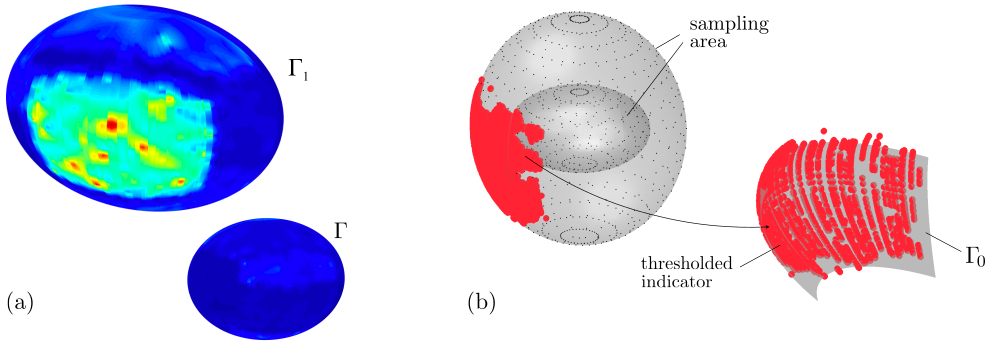}}
\caption{\small Full-aperture FM reconstruction of interfacial fracture $\Gz$ in the three-layer \emph{Composite I}, see Fig.~\ref{ch2example}~(a), over the ellipsoidal sampling surface~$\G \cup \G_1$:~(a)~total FM indicator map $I^{\mathcal{F}}({\bx_{\small \circ}\nxs})$ obtained according to~\eqref{FiFM}, and~(b)~the support of truncated FM indicator $I^{\mathcal{F}}_{0.1}$ computed based on~\eqref{TiFM}.}\label{ex1}
\end{center}
\end{figure}
 \begin{figure}
\begin{center}
\vspace*{-0.2cm}
\resizebox{1\textwidth}{!}{\includegraphics{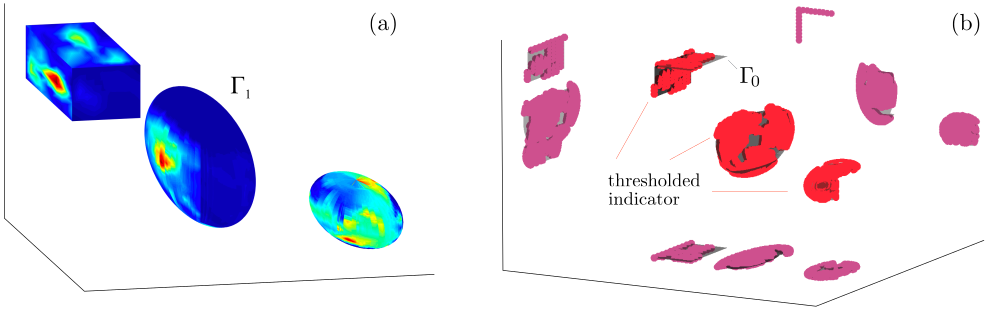}}
\caption{\small Full-aperture FM reconstruction of interfacial damage $\Gz$ in the two-layer \emph{Composite II}, see Fig.~\ref{ch2example}~(b), over a 2D sampling surface~$\G_1$ (comprised of an ellipsoidal, a spherical and a cubical grid):~(a)~total FM indicator map $I^{\mathcal{F}}({\bx_{\small \circ}\nxs})$ according to~\eqref{FiFM}, and~(b)~the support of truncated FM indicator $I^{\mathcal{F}}_{0.1}$ computed based on~\eqref{TiFM}.
}\label{ex2}
\end{center}
\end{figure}

\section*{Conclusion}
The $F_\sharp$-factorization applied to sequential elastodynamic measurements form a fast, yet robust, platform for the geometric reconstruction of damage along bi-material interfaces in layered composites (such as metamaterials and reinforced components used in critical infrastructures). It is shown that the FM-based indicator possesses little sensitivity to reasonable levels of measurement noise, while carrying a top-tier localization property. Such attributes guarantee a high-quality geometric characterization of interfacial fractures notwithstanding the frequency regime of excitation and the unknown (heterogeneous and dissipative) interfacial stiffness $\KK$. The analytical framework presented here accommodates for an \emph{unconnected} fracture support, so that the FM indicator can be utilized for simultaneous reconstruction of multiple fractures in a layered medium, as illustrated in the numerical experiments. The proposed fracture indicator naturally lends itself to imaging complex damage configurations where interfacial cracks in advanced stages have branched into the material layers forming arbitrary-shaped discontinuity networks. In this case, the premises and elements of analysis for applicability of the proposed imaging functional are discussed.


\section*{Authors' contributions}

I.D.T.~developed Sections 1, 2, and 3.4, and F.P.~obtained the results of Sections 3.2, 3.3 and 4. Both authors equally contributed to the preparation of the manuscript. The data supporting the analysis and conclusions can be accessed by contacting F.P.  

\section*{Acknowledgements}

The corresponding author kindly acknowledges the support provided by the University of Colorado Boulder. This work utilized the RMACC Summit supercomputer, which is supported by the National Science Foundation (awards ACI-1532235 and ACI-1532236), the University of Colorado Boulder, and Colorado State University. The Summit supercomputer is a joint effort of the University of Colorado Boulder and Colorado State University. Special thanks are due to Professor Fioralba Cakoni  for her input during the course of this investigation.

\bibliographystyle{siamplain}
\bibliography{mybib}

\end{document}